\documentclass[absolute]{mymathart}
\usepackage{mymathmacros}
\usepackage{color,stmaryrd,caption,keyval,subfig}
\usepackage[shortlabels]{enumitem}
\usepackage{microtype}

\numberwithin{equation}{section}

\theoremstyle{changebreak}

\newcommand{\mc}[1]{\mathcal{#1}}

\newtheorem{thm}{Theorem}[section]%
\newtheorem{lem}[thm]{Lemma}%
\newtheorem{cor}[thm]{Corollary}%
\newtheorem{prop}[thm]{Proposition}%

\newcommand{\HH}{\mathbb{H}}
\newcommand{\DD}{\mathbb{D}}

\theoremstyle{defnbreak}
\newtheorem{rmk}[thm]{Remark}%
\newtheorem{defn}[thm]{Definition}%

\newcommand{\real}{\operatorname{real}}

\newcommand{\U}{\mathcal{U}}

\captionsetup[subfigure]{margin=0pt, parskip=0pt, hangindent=0pt, indention=0pt, labelformat=parens, labelfont=rm}

\usepackage[hypertexnames=false]{hyperref}

\title{A bouquet of pseudo-arcs}
\author{Tania Gricel Benitez}
\address{Dept.\ of Mathematical Sciences, University of Liverpool, Liverpool L69 7ZL, UK} \email{t.benitez-lopez@liverpool.ac.uk}
\author{Lasse Rempe}
\address{Dept.\ of Mathematical Sciences, University of Liverpool, Liverpool L69 7ZL, UK, ORCiD: 0000-0001-8032-8580}
\email{l.rempe@liverpool.ac.uk}
\subjclass[2020]{Primary 37F10; Secondary 30D05, 37B45, 54F15}

\newcommand{\F}{\mathcal{F}}

\newcommand{\Blog}{\mathcal{B}_{\log}}

\newcommand{\s}{\underline{s}}

\begin{document} 

\begin{abstract}
 We prove the existence of a transcendental entire function
   whose Julia set is a ``bouquet of pseudo-arcs''. More precisely,
   $J(f)\cup\{\infty\}$ is an uncountable union of pseudo-arcs, which are
   pairwise disjoint except at infinity. 
   
  The existence of such a function follows from a more
   general result of the second author, but 
   our construction is considerably simpler and more explicit.
   In particular, the
   function we construct can be chosen to have lower order $1/2$, while the lower 
   order in the previously known example 
   is infinite. 
   \end{abstract}

\maketitle

\section{Introduction}\label{sec:intro}
  
  We consider the iteration of transcendental entire functions; i.e.\ of non-polynomial 
   holomorphic self-maps of the complex plane. 
   The subject was founded by Fatou in 1926 \cite{fatou}. In particular, he observed
   that the Julia sets of certain transcendental entire functions 
   contained curves of \emph{escaping points}, i.e., points whose orbits tend to
   infinity. Fatou asked whether this
   holds more generally. Eremenko made this question more precise in 1989, when 
   he asked whether every escaping point of a transcendental entire function 
   can be connected to infinity by a curve of escaping points. 
   The latter question was answered in the negative in~\cite{strahlen}. The counterexample
   belongs to a class of entire functions whose dynamics is of a particularly simple form. 
  
  \begin{defn}\label{defn:disjointtype}
    An entire function $f$ is of \emph{disjoint type} if there is a bounded Jordan domain $D\subset\C$ such
      that $f(\overline{D})\subset D$, and such that 
        \[ f\colon \mathcal{V} \defeq f^{-1}(\C\setminus D)\to \C\setminus D \]
      is a covering map. 
  \end{defn}  
  
  Equivalently, $f$ is \emph{hyperbolic} with connected Fatou set; see~\cite[Definition~1.1]{arclike} and~\cite[Lemma~3.1]{baranskikarpinskatrees}. 
    We refer 
    to~\cite{arclike} for background on disjoint-type entire functions and their significance for wider
    classes of transcendental entire functions. If $f$ is a disjoint-type entire function, then its
    Fatou set consists of a single immediate attracting basin. In particular, the \emph{escaping set}
    \[ I(f) \defeq \{z\in\C\colon f^n(z)\to\infty\} \]
    is contained in the Julia set $J(f)$. Thus the following theorem from~\cite{strahlen} does indeed
    give a negative answer to Eremenko's question. 
  
  \begin{thm}[{\cite[Theorem~1.1]{strahlen}}]\label{thm:strahlen1}
    There is a disjoint-type entire function $f$ such that $J(f)$ contains no curve to infinity. 
  \end{thm}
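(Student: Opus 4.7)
The plan is to work in logarithmic coordinates, construct a disjoint-type model with ``bad'' tract geometry, and then realize the model as an actual entire function.

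First, I would pass to the logarithmic transform. Since $f$ is disjoint type, Definition~\ref{defn:disjointtype} says the connected components of $\V$ are unbounded, and on each of them $f$ is a universal covering of $\C\setminus D$. Lifting through the exponential yields a map $F\colon \T\to\HH$, where $\T$ is a $2\pi i$-periodic disjoint union of simply connected ``logarithmic tracts'' in the right half-plane, each mapped conformally onto $\HH$. Any curve to infinity in $J(f)$ lifts to a curve in $J(F)$ whose tail lies, at each iterate, in a single logarithmic tract; this gives it an eventual external address $\s=T_0T_1T_2\dots$.

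Second, I would design tract shapes that prevent such a lifted curve. The heuristic: arrange each tract to be pinched to narrow ``bottlenecks'' at a sequence of heights $R_n\to\infty$ where the Euclidean width tends to zero. By the standard comparison between Euclidean and hyperbolic metric in a simply connected domain, the inverse branches of $F$ at such a bottleneck are strongly contracting. Iterating this contraction along an orbit forces the fibre of escaping points over any fixed external address to be totally disconnected, and in particular to contain no non-degenerate arc.

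Third, I would realize the model as an entire function. Using Bishop's quasiconformal-folding construction, or the Eremenko--Lyubich procedure for passing from models to entire functions, one obtains a disjoint-type entire $f$ whose logarithmic transform is quasiconformally equivalent near infinity to the model. Because quasiconformal equivalence near infinity preserves the arc-connectedness type of escaping fibres, the non-existence of curves to infinity in the model transfers to $J(f)$.

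The main obstacle is the middle step: the contraction estimate must hold uniformly along \emph{every} external address realised by an escaping orbit, not merely periodic ones, so that a putative curve to infinity is ruled out even if it wanders through infinitely many distinct tracts. The standard remedy is to permit only finitely many distinct tract shapes in $\T$ and to control their vertical positions symbolically, reducing the analysis to a compact family of configurations. The outer two steps are by now fairly routine consequences of Eremenko--Lyubich theory and constructive approximation, so the novelty of the argument really sits in the explicit geometry of the tracts.
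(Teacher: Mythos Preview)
Your middle step misidentifies the mechanism. Narrow bottlenecks in a tract do make $F^{-1}$ strongly contracting there, but this does \emph{not} force the fibre $J_{\s}$ to be totally disconnected: for any disjoint-type model as in the paper (Definition~\ref{defn:classH}), each $\hat{J}_{\s}$ is a nested intersection of continua and hence itself a non-degenerate continuum (see the proof that Julia continua are arc-like). So ``totally disconnected fibres'' is simply unattainable in this setting. Worse, if the tract is merely a straight half-strip of variable width (even with width tending to zero at a sequence of heights), the Julia continua are in fact arcs; the pinching affects the speed of parametrisation, not the topology.

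The construction in~\cite{strahlen}, as the paper summarises just before Figure~\ref{fig:strahlen1}, is geometrically quite different: the tract has essentially \emph{constant} width but is forced to \emph{wiggle}, i.e.\ its real part first increases to some large $R$, then decreases back to a much smaller $r$, then increases again, and this is repeated infinitely often. A curve in $J_{\s}$ tending to infinity would have to track these oscillations under every iterate, and the parameters $(r_n,R_n)$ are chosen so that this accumulation of back-and-forth is incompatible with being a curve. There is also no need for ``finitely many distinct tract shapes'': a single tract (together with its $2\pi i$-translates) suffices. Your outer two steps (passing to logarithmic coordinates and realising the model as an entire function via approximation) are fine and indeed match the paper's framework, but the heart of the argument needs to be replaced by the wiggle geometry rather than bottleneck contraction.
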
 
  
   In \cite[Theorem~8.4]{strahlen}, the authors also sketch a proof of the following stronger result.
   
   \begin{thm}\label{thm:strahlen2}
    There is a disjoint-type entire function $f$ such that $J(f)$ contains no arc. 
   \end{thm}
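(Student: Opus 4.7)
The plan is to produce a disjoint-type entire function whose Julia set decomposes, up to the point at infinity, into a union of pseudo-arcs indexed by external address, and then invoke the fact that a pseudo-arc contains no non-degenerate arc. This is strictly stronger than Theorem~\ref{thm:strahlen2} but delivers it as an immediate corollary: any arc in $J(f)$, being connected and disjoint from $\infty$, would have to lie in a single composant; but pseudo-arcs are hereditarily indecomposable, and a non-degenerate arc is decomposable, contradiction.

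To build the function I would work in the logarithmic (i.e.\ $\Blog$) model. Start with a Jordan domain $D$ and lift $f$ to a disjoint-type map whose tract $\V=f^{-1}(\C\setminus D)$ is a disjoint union of unbounded Jordan domains $T_j$, each mapped as a universal cover onto $\C\setminus \overline{D}$. For each external address $\s=(s_0,s_1,\dots)$ over the index set of tracts, let
\[
\Jsh \defeq \{\,z\in J(f)\cup\{\infty\}\colon f^n(z)\in \overline{T_{s_n}}\cup\{\infty\}\text{ for all }n\ge 0\,\}.
\]
Standard arguments in the disjoint-type setting show that $\Jsh$ is a continuum containing $\infty$, that distinct addresses give sets meeting only at $\infty$, and that $J(f)\cup\{\infty\}=\bigcup_{\s}\Jsh$. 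Moreover $\Jsh$ admits a natural presentation as an inverse limit of ``strips'' in the tracts $T_{s_n}$ cut out by the equipotentials of $D$; these strips can be organised into chains (finite covers by small connected open sets, each meeting only its neighbours), so each $\Jsh$ is \emph{chainable}. This is the first half of the classical characterisation of the pseudo-arc.

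The crucial step—and the main obstacle—is to arrange the geometry of the tracts so that the inverse limit produces a \emph{hereditarily indecomposable} continuum. By Bing's theorem this is equivalent to verifying the \emph{crookedness} condition: for every chain $\mathcal{C}$ covering $\Jsh$ and every $\epsilon>0$ there is a refining chain $\mathcal{C}'$ that is $\epsilon$-crooked inside $\mathcal{C}$. Because $f$ expands along $\Jsh$ and pulls equipotentials back to long, winding curves in the tracts, crookedness of pulled-back chains is governed by the geometry of the tract boundaries. The strategy is therefore to design tracts $T_j$ whose shapes oscillate on arbitrarily fine logarithmic scales—concretely, to model $T_j$ on a ``zigzag'' strip that turns back on itself many times between successive equipotentials—so that lifting any chain through $f$ once forces additional back-and-forth in the prescribed Bing pattern. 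Choosing the amplitude and period of the zigzags to match the contraction rate of the inverse branches of $f$ produces, after finitely many pullbacks, the required $\epsilon$-crooked refinement inside any prescribed chain.

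Once the hereditary indecomposability is in hand, Bing's characterisation identifies each $\Jsh$ with a pseudo-arc. The conclusion of Theorem~\ref{thm:strahlen2} then follows as noted above, and the additional statement of the abstract (bouquet of pseudo-arcs, pairwise disjoint except at infinity) is simultaneously established. I expect the routine parts to be: verifying the covering/disjoint-type properties, the inverse-limit description of $\Jsh$, and the chainability. The genuine work is packaging the tract geometry so that the Bing crookedness condition is provable quantitatively, and (for the ``lower order $1/2$'' strengthening mentioned in the abstract) controlling the growth of $|f|$ so that the zigzag tracts do not force faster-than-order-$1/2$ growth; this amounts to choosing the zigzag amplitudes to grow sub-polynomially while keeping the period adapted to the expansion rate of $f$ on $\V$.
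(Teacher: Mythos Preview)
Your outline is correct and follows essentially the same strategy as the paper: prove the stronger pseudo-arc statement by working in logarithmic coordinates, showing each Julia continuum $\hat{J}_{\s}$ is arc-like, and then designing oscillating tracts so that Bing's crookedness criterion forces hereditary indecomposability. The paper's key device for the step you flag as ``the genuine work'' is to reduce the crookedness verification to a \emph{one-dimensional projection} $\phi(t)=\re F^{-1}(t)$ and, following Henderson, to insert wiggles into the tract inductively---one for each of the countably many integer quadruples---so that for every quadruple $Q$ some iterate $\phi^n$ maps every minimal preimage interval crookedly over $Q$; this is more delicate than a single zigzag pattern at all scales, since each new wiggle must be placed far enough to the right not to disturb the crookedness already achieved for earlier quadruples.
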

   
   Every connected component $C$ of the Julia set $J(f)$ of a disjoint-type entire function $f$ 
     is an unbounded closed connected set. Thus
     $\hat{C}\defeq C\cup\{\infty\}$ is a compact connected set, which, following~\cite{arclike},
     we call a \emph{Julia continuum}. 
     For the function in Theorem~\ref{thm:strahlen2}, each such continuum 
     may be considered ``pathological'' in that it contains
     no arc, and it is natural to ask how complicated the topology of a Julia continuum
     may become. In particular, we may ask whether such a continuum may be
     \emph{hereditarily indecomposable}, i.e., have the property that any two subcontinua
     are either nested or disjoint. Clearly a hereditarily indecomposable continuum contains no arcs. 
     
   A famous example of a hereditarily indecomposable continuum is the \emph{pseudo-arc} (see Section~\ref{sec:topology}). 
     So we may ask, in particular, whether the pseudo-arc may arise as the Julia continuum of a disjoint-type
     entire function.\footnote{In fact, it follows from~\cite[Theorem~1.4]{arclike} and~\cite{hoehnoversteegenhomogeneous} that
     any hereditarily indecomposable Julia continuum of a disjoint-type function is a pseudo-arc.}
     This question was answered by the second author in~\cite[Theorem~1.5]{arclike}.
     
     \begin{thm}\label{thm:pseudoarc1}
       There is a disjoint-type entire function $f$ such that every Julia continuum of $f$ is a pseudo-arc. 
     \end{thm}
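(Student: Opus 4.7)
The plan is to construct $f$ explicitly through its logarithmic transform. Let $D$ be the attracting Jordan domain from Definition~\ref{defn:disjointtype}, let $\mathcal{V}=f^{-1}(\mathbb{C}\setminus\overline{D})$, and pass to the lift $F\colon \mathcal{T}\to\mathbb{H}$ satisfying $\exp\circ F = f\circ\exp$; each component (\emph{tract}) of $\mathcal{T}$ is mapped conformally by $F$ onto a right half-plane. In these coordinates the Julia continua of $f$ are naturally indexed by external addresses $\underline{s} = T_0 T_1 T_2 \ldots$, and the preimage of $\hat{C}_{\underline{s}}$ in $\mathcal{T}$ is a component of $\bigcap_{n\ge 0} F^{-n}(\overline{T_n})$. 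By the footnote in the excerpt it therefore suffices to arrange that each such continuum is arc-like (equivalently, chainable) and hereditarily indecomposable, because a chainable hereditarily indecomposable continuum is a pseudo-arc.

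I would design the tracts to be long, thin strips of bounded Euclidean width whose centerlines execute carefully controlled zigzags, oscillating at all scales with amplitude large relative to the tract width. Because $F$ is a conformal contraction on each tract and the tracts have bounded hyperbolic geometry, the $n$-fold pullback $\overline{T_0}\cap F^{-1}(\overline{T_1})\cap\cdots\cap F^{-n}(\overline{T_n})$ is again a thin strip whose shape is a Koebe-bounded distortion of a sub-piece of the ambient zigzag. Thus the finite intersections are chain covers of $\hat{C}_{\underline{s}}$ by links of arbitrarily small diameter, delivering chainability of every Julia continuum essentially for free. The zigzag is chosen so that, at every pullback level and at every relevant scale, the refined chain sits \emph{Bing-crookedly} inside the coarser one: between any two non-adjacent links one can insert a sub-chain that passes back near both endpoints before crossing. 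Crookedness is preserved in the nested intersection and yields hereditary indecomposability of $\hat{C}_{\underline{s}}$.

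The main obstacle is the quantitative calibration of the zigzag so that Bing's crookedness condition survives every conformal pullback: the oscillation introduced at small scales must dominate the straightening effect of Koebe distortion accumulated over arbitrarily many pullbacks, and this must hold uniformly in the external address $\underline{s}$. Once that hyperbolic estimate is established, realising $F$ as the logarithmic transform of a genuine entire function is a concrete matter. I would prescribe the growth of $F$ along the tract so that $\log|f(z)| \asymp \sqrt{|z|}$, which gives an entire function of lower order $1/2$; the zigzag can then be built into an explicit formula of $\cosh(\sqrt{\,\cdot\,})$ type, or obtained by a mild perturbation thereof, without invoking Bishop's quasiconformal folding.
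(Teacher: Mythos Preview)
Your overall framework is the same as the paper's: build a model $F\colon T\to\HH$ in logarithmic coordinates whose tract contains wiggles forcing crookedness of the pullback chains, deduce that every Julia continuum of the model is a pseudo-arc, and then realise the model by a disjoint-type entire function. However, two points deserve comment.

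First, the realisation step is a genuine gap as written. You cannot produce a disjoint-type entire function with prescribed tract geometry by an ``explicit formula of $\cosh(\sqrt{\,\cdot\,})$ type'' or a ``mild perturbation thereof''; functions of that shape have specific tracts, not arbitrary ones, and in particular not tracts carrying the delicate zigzag data you need. The paper does not attempt anything of the sort: it invokes Theorem~\ref{thm:realisation}, which packages results of Bishop (approximation of models in the Eremenko--Lyubich class) and of the second author (conjugacy near infinity) to pass from the model $F$ to an entire $f$ with homeomorphic Julia continua. This step is substantial and cannot be bypassed by hand.

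Second, your description of the tract as ``oscillating at all scales'' is closer to the construction sketched in~\cite{strahlen} for Theorem~\ref{thm:strahlen2} (Figure~\ref{fig:strahlen2}) than to what the paper actually does. One of the paper's main points is that nested subwiggles are \emph{unnecessary}: the tract carries only a sequence of \emph{simple} wiggles at increasing real parts (Figure~\ref{fig:tracts}), and crookedness at all scales is manufactured by \emph{iteration} rather than by tract geometry. The mechanism is the one-dimensional projection $\phi(t)=\re F^{-1}(t)$ of Section~\ref{sec:projection1}: Theorem~\ref{thm:pseudoarcrealisation} reduces hereditary indecomposability to a Henderson-type crookedness condition on iterates of the interval map $\phi$, and the wiggles are placed inductively (Propositions~\ref{prop:wiggles} and~\ref{prop:subwiggles}) so that each integer quadruple $Q$ eventually has $\U_n(Q,\phi)=\hat{\U}_n(Q,\phi)$. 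This reduction to a single real variable is what makes the ``quantitative calibration'' you flag as the main obstacle tractable; your proposal does not supply a comparable device, so the acknowledged obstacle remains open in your outline.
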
 
     
    In fact,~\cite[Theorem~2.7]{arclike} shows that any continuum that arises as the inverse limit of a self-map of the interval below the identity
      (see~\cite[Definition~2.6]{arclike}) can be realised as a forward-invariant Julia continuum. Theorem~\ref{thm:pseudoarc1} is
      then obtained as a corollary, using a classical theorem of Henderson~\cite{hendersonpseudoarc}
      that states that the pseudo-arc can be represented as such an inverse limit. 
      The goal of this article is to give a simpler and more 
      direct proof of Theorem~\ref{thm:pseudoarc1}, by directly applying some of the ideas from~\cite{hendersonpseudoarc}. 
      This may also be the simplest proof of Theorem~\ref{thm:strahlen2} yet. (Making the proof sketched in~\cite{strahlen} precise would 
      require a considerable amount of book-keeping and notation.) 
      Since our approach is much more 
      explicit, it offers the possibility of obtaining more detailed information about the function-theoretic behaviour of such a ``pathological'' function $f$. 
      In particular, we show the following.
      
   \begin{thm}\label{thm:pseudoarc2}
    The function $f$ in Theorem~\ref{thm:pseudoarc1} can be chosen to have lower order of growth $1/2$; that is,
       \[ \liminf_{r\to\infty} \frac{\log \log M(r,f)}{\log r} = \frac{1}{2}, \]
       where $M(r,f)= \max\{\lvert f(z)\rvert\colon \lvert z\rvert = r\}$. 
   \end{thm}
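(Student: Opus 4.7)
The plan is to revisit the explicit construction used to prove Theorem~\ref{thm:pseudoarc1} and to estimate the growth of the resulting function. One half of the statement, the inequality
\[ \liminf_{r\to\infty}\frac{\log \log M(r,f)}{\log r}\geq \frac{1}{2}, \]
is essentially automatic. Disjoint-type entire functions have bounded singular set and thus belong to the Eremenko--Lyubich class $\mathcal{B}$; each tract $\mathcal{V}=f^{-1}(\C\setminus D)$ provides a direct singularity over $\infty$, and a classical length--area argument in the spirit of Denjoy--Carleman--Ahlfors shows that the lower order of any entire function with a direct singularity over $\infty$ is at least $1/2$. Thus it suffices to produce a function satisfying the conclusion of Theorem~\ref{thm:pseudoarc1} whose growth along some sequence $r_n\to\infty$ is bounded by $\exp(C\sqrt{r_n})$.

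For the matching upper bound, I would engineer the tract $\mathcal{V}$ to be a single simply-connected region contained in the right half-plane and of width bounded independently of the real part --- morally a horizontal half-strip. The conformal map from $\mathcal{V}$ to a half-plane then behaves like a square-root, and a Phragm\'en--Lindel\"of estimate yields $\log |f(z)|\leq C\sqrt{|z|}$ on $\mathcal{V}$. Since such a half-strip occupies only a bounded arc of each large circle, taking radii $r_n$ that avoid $\mathcal{V}$ gives $\log M(r_n,f)\leq C\sqrt{r_n}$. The function $f$ itself would be obtained from its prescribed tract and covering by the usual Cauchy-integral or Runge-type approximation scheme employed in the earlier sections of the paper, producing an entire function that matches the model up to a bounded perturbation.

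The real difficulty is to verify that the delicate combinatorics needed to render every Julia continuum a pseudo-arc are compatible with such a thin tract. Passing to logarithmic coordinates $w=\log z$ sends $\mathcal{V}$ to a horizontal half-plane $\tilde{\mathcal{V}}$ and conjugates $f$ to a map $F=\log\circ f\circ \exp$; the Henderson-style crookedness underlying the pseudo-arc must then be realised purely by the internal combinatorial structure of the covering $F$, rather than by a complicated outer shape of $\mathcal{V}$. My expectation is that this separation is indeed possible, because the wiggling lives entirely in the nested inverse branches of $F$ inside $\tilde{\mathcal{V}}$ and can be encoded by a judicious choice of the covering of $\C\setminus D$ together with the boundary geometry of $\partial D$. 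The main obstacle, then, is to show that the inductive pseudo-arc construction of the paper can be carried out under the additional constraint of a half-strip-shaped tract, without either sacrificing the hereditary indecomposability of the continua or spoiling the $\sqrt{r}$ growth bound.
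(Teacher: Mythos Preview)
Your proposal contains a genuine gap at its core. You hope to keep the tract $\mathcal{V}$ a plain half-strip and to realise the Henderson crookedness ``purely by the internal combinatorial structure of the covering $F$'', but in the disjoint-type setting there is no such additional freedom. The logarithmic tract $T$ is simply connected, and the map $F\colon T\to\HH$ used throughout the paper is a conformal \emph{isomorphism}; it is uniquely determined (up to normalisation) by the shape of $T$. The ``wiggles'' in the boundary of $T$ are precisely what create the crooked mapping behaviour of the one-dimensional projection $\phi=\re F^{-1}$, and a straight half-strip produces a monotone $\phi$, hence arc-like Julia continua that are genuine arcs, not pseudo-arcs.

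A second, related misconception is the growth estimate. Containment of $T$ in a half-strip does \emph{not} bound the growth of $F$: a thin winding subdomain of a half-strip can map to $\HH$ with arbitrarily fast growth. The wiggles needed for crookedness make the tract locally thin, and on those pieces $\re F$ grows much faster than $e^{x/2}$; see Proposition~\ref{prop:Fgrowth}.

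What the paper actually does is reconcile these competing effects by \emph{spacing} the wiggles. Between consecutive wiggles the tract is a genuine full-width half-strip, and a conformal modulus comparison (carried out in the proof of Theorem~\ref{thm:Fpseudoarcs}) shows that on those stretches $\log\lvert F(\zeta)\rvert \le (\tfrac12+o(1))\re\zeta$. By choosing each new wiggle far enough to the right (the parameter $\rho_k$ in the inductive construction), these straight stretches dominate the liminf and give~\eqref{eqn:Flowerorder}. Finally, passing from the model $F$ to an entire function while \emph{preserving} lower order $1/2$ is not automatic from the general realisation theorem; the paper either invokes quasiconformal folding (Section~\ref{sec:speiser}) or modifies the target of $F$ from $\HH$ to a suitable domain $H$ and appeals to the approximation results of~\cite{approximationhypdim}. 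Your sketch does not address this step.
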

    In contrast, the construction in~\cite{arclike} leads to functions of infinite lower order. 
      Using the techniques of \cite[Section~18]{bishopfolding} or \cite[Section~15]{arclike},
       the function in Theorem~\ref{thm:pseudoarc2} can also be constructed to have only two critical 
      values and no finite asymptotic values. 
      
  \begin{thm}[Pseudo-arc bouquets in the Speiser class]\label{thm:speiser}
   The function $f$ from Theorems~\ref{thm:pseudoarc1} and~\ref{thm:pseudoarc2} can be chosen such that 
     $f$ has exactly two critical values and no finite asymptotic values, and such that there is $D>1$ such that
     all critical points
     are of degree at most $D$. 
  \end{thm}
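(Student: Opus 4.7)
The plan is to follow the strategy already indicated after the statement, namely to invoke Bishop's quasiconformal folding construction \cite{bishopfolding} (in the form adapted to disjoint-type functions in \cite[Section~15]{arclike}) rather than to modify the direct construction of Theorems~\ref{thm:pseudoarc1} and~\ref{thm:pseudoarc2} from scratch. The function $f$ produced for those theorems will be presented as a disjoint-type map whose restriction to each tract of $\mathcal{V}=f^{-1}(\C\setminus D)$ is a conformal isomorphism (or covering of bounded degree) onto $\C\setminus D$, with a prescribed combinatorial/geometric structure on the tracts designed to yield pseudo-arc Julia continua and a carefully arranged growth giving lower order $1/2$. The goal is to replace this model by a genuine Speiser-class function whose tracts, and the behaviour of the map on them, are essentially the same.

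First I would reformulate the data of the construction as a \emph{model} in the sense of~\cite[Section~15]{arclike} (equivalently, as input to the folding machinery of~\cite[Section~18]{bishopfolding}): an infinite tree $T\subset\C$ with bounded geometry, together with a preferred ``tract'' function on each complementary component that corresponds, under logarithmic coordinates, to the covering map $f\colon\mathcal{V}\to\C\setminus D$ used for Theorem~\ref{thm:pseudoarc1}. One then arranges $T$ so that its edge lengths grow in the manner needed to realise lower order $1/2$; this is the same calibration used in Theorem~\ref{thm:pseudoarc2} and carries over essentially verbatim, since the folding construction produces a function that, off a small neighbourhood of $T$, agrees with the model.

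Applying the folding theorem then produces an entire function $g$ together with a quasiconformal map $\varphi\colon\C\to\C$ such that $g\circ\varphi$ coincides with the model outside a thin neighbourhood $L$ of $T$, $g$ has exactly two critical values ($\pm 1$, say), no finite asymptotic values, and all critical points of degree at most some $D>1$ determined by the local valences at vertices of $T$. The Measurable Riemann Mapping Theorem is absorbed into $\varphi$; after a further quasiconformal conjugation one may take $g$ to be of disjoint type with Fatou domain and tracts matching those of the model up to the quasiconformal homeomorphism $\varphi$.

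The crux is to verify that this replacement does not destroy the pseudo-arc structure. The proof of Theorem~\ref{thm:pseudoarc1} shows that each Julia continuum is a pseudo-arc by exhibiting it as an inverse limit of the sort appearing in Henderson's theorem, using only the combinatorics of the tracts and the covering behaviour of $f$ on $\mathcal{V}$; the actual conformal geometry enters only through the fact that orbit tails in the tracts contract in hyperbolic metric. Since $g$ is quasiconformally conjugate to the model on the escaping set, the same inverse-limit description applies to $g$, and hence every Julia continuum of $g$ is again a pseudo-arc. Finally, the lower order of $g$ is controlled by the edge-length distribution of $T$ (standard estimates in \cite[Section~18]{bishopfolding}), so the order $1/2$ bound survives. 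The main obstacle is the book-keeping needed to check that the folding modification can be performed while simultaneously respecting: (i) the nested combinatorics producing pseudo-arcs, (ii) the geometric scaling giving lower order $1/2$, and (iii) the bounded-degree constraint on critical points; this is essentially a verification that the tree $T$ built for Theorem~\ref{thm:pseudoarc2} has bounded geometry in the sense required by the folding theorem, which I expect to hold directly from the explicit nature of our construction.
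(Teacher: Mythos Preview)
Your approach is the same one the paper takes: feed the model from Theorem~\ref{thm:Fpseudoarcs} into Bishop's quasiconformal folding as in~\cite[Section~18]{bishopfolding} and~\cite[Section~15]{arclike}. Two places deserve more care than you indicate, however.

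First, the paper does not apply folding directly to the tract $T\in\mathcal{T}$; it first extends $T$ to a bi-infinite strip-like domain $\tilde{T}$ (adding the half-strip $\{x+iy\colon x\leq 4,\ \lvert y\rvert<\pi\}$) and passes to a new conformal isomorphism $\tilde{F}\colon\tilde{T}\to\HH$. This is needed so that $\exp(\partial\tilde{T})$ becomes a bounded-geometry tree. The point that requires an actual argument is that the one-dimensional projection $\tilde{\phi}$ of $\tilde{F}$ still satisfies the crookedness hypothesis of Theorem~\ref{thm:pseudoarcrealisation}; the paper establishes this via a quasiconformal comparison between $F$ and $\tilde{F}$ showing $\lvert\tilde{\phi}^n-\phi^n\rvert\leq C$ on a suitable range.

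Second, your statement that ``$g$ is quasiconformally conjugate to the model on the escaping set'' is not what folding gives: one only gets a relation $f\circ\varphi=\cosh\circ\tilde{F}$ \emph{outside} the folding neighbourhood $G(r_0)$ of the tree, which is not a conjugacy and does not a priori cover all escaping orbits. The paper closes this gap by noting that the edges of the tree can be chosen to shrink exponentially, so $G(r_0)$ has finite cylindrical area; then, given any decomposable subcontinuum of $J(f)$, one passes to a subcontinuum whose forward orbit eventually avoids $G(r_0)$ entirely, at which point the relation with the model does transfer the topology and yields a contradiction. The same shrinking of $G(r_0)$, combined with the Teichm\"uller--Wittich--Belinski--Lehto theorem, is what gives asymptotic conformality of $\varphi$ at infinity and hence preserves the lower order $1/2$.
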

  
   Let us also note a consequence of Theorem~\ref{thm:speiser}. In polynomial
    dynamics, \emph{local connectivity} of Julia sets is an important and much-studied property.
    The reason for this is that, when $J(f)$ is locally connected, 
    the topological dynamics of the polynomial $f$ can be completely described
    in combinatorial terms; compare e.g.\ \cite{pincheddisk}. Local connectivity 
    of Julia sets has also been studied for transcendental entire functions. Of course, when
    $J(f)=\C$, the Julia set is locally connected, but this does not imply that the dynamics
    is simple! The following example shows that, even when $J(f)\neq \C$, local connectivity of the
    Julia set does not imply simple dynamics.
    
  \begin{cor}[Locally connected Julia sets with pseudo-arcs]\label{cor:locallyconnected}
    There exists a transcendental entire function $f$ with the following property. 
    \begin{enumerate}[(a)]
      \item $J(f)$ is a Sierpi\'nski carpet, and in particular locally connected.
      \item $J(f)$ contains an infinite collection $P_1, P_2,\dots$ of pairwise disjoint unbounded connected invariant sets such that
                         $P_j\cup\{\infty\}$ is a pseudo-arc for all $j$. 
      \item $J(f)$ contains an uncountable collection $\mathcal{P}$ of pairwise disjoint unbounded connected sets
                such that, for all $P\in\mathcal{P}$, $P\cup\{\infty\}$ is a pseudo-arc and $f(P)\in \mathcal{P}$. 
    \end{enumerate}
  \end{cor}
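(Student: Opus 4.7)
The plan is to derive Corollary~\ref{cor:locallyconnected} from Theorem~\ref{thm:speiser} by a Shishikura-type quasiconformal surgery. Let $f_0$ denote the Speiser-class function supplied by Theorem~\ref{thm:speiser}, with immediate attracting basin $U$ containing both critical values. The idea is to replace the simple attracting dynamics of $f_0|_U$ by that of a hyperbolic rational map $R$ whose Julia set is a Sierpi\'nski carpet --- for instance a suitable McMullen map $R(z)=z^n+\lambda/z^m$ --- while keeping $f_0$ unchanged on the pseudo-arc bouquet sitting in $\C\setminus U$.

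Fix a fundamental annulus $A\subset U$ for the attracting dynamics of $f_0$ and an analogous fundamental annulus $A_R$ in the immediate basin of an attracting fixed point of $R$. Define a quasiregular map $\tilde f\colon\C\to\C$ equal to $f_0$ outside a small neighbourhood of the grand orbit of $A$, equal to a conformally embedded copy of $R$ on the grand orbit of $A_R$ inside $U$, and interpolating quasiconformally on a thin annular collar. The Beltrami coefficient of $\tilde f$ has bounded dilatation supported on a set whose iterated pullbacks stay uniformly bounded, so the measurable Riemann mapping theorem produces a quasiconformal homeomorphism $\phi\colon\C\to\C$ --- conformal near $\infty$, since $\mu_{\tilde f}\equiv 0$ outside a bounded set --- with $f\defeq\phi\circ\tilde f\circ\phi^{-1}$ holomorphic and hence transcendental entire.

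To verify (a)--(c), observe that the Fatou components of $f$ are the $\phi$-images of iterated $\tilde f$-preimages of the Fatou components of the embedded copy of $R$; hyperbolic expansion on $J(f)$ together with the carpet combinatorics of $R$ ensures that these are Jordan domains with pairwise disjoint closures whose spherical diameters tend to zero, so by Whyburn's topological characterisation $J(f)\cup\{\infty\}$ is a Sierpi\'nski carpet, proving (a). Since $\phi$ restricts to a homeomorphism on $\C\setminus U$ intertwining $f_0$ and $f$, the uncountable family of pseudo-arc Julia continua of $f_0$ supplied by Theorem~\ref{thm:pseudoarc1} is carried onto an uncountable collection $\mathcal{P}$ of pairwise disjoint pseudo-arcs in $J(f)\cup\{\infty\}$ with $f(P)\in\mathcal{P}$, giving (c); (b) follows by extracting the countably many members of $\mathcal{P}$ whose addresses are fixed by the shift, which are therefore individually $f$-invariant.

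The main obstacle will be verifying that the new bounded Fatou components, which accumulate on $\partial U$, have closures disjoint from each other and from the pseudo-arc continua that approach $\partial U$ from outside. This comes down to a careful estimation using the hyperbolic expansion of $f$ on its Julia set combined with a judicious choice of the thickness of the interpolation collar, plus a verification that iterated preimages of the implanted Fatou components of $R$ produce shrinking Jordan disks with well-separated closures.
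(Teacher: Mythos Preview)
The paper takes a much simpler route that avoids quasiconformal surgery altogether. Since the function $g$ from Theorem~\ref{thm:speiser} has exactly two critical values (say $0$ and $1$) and no asymptotic values, one may pick critical points $c_0\in g^{-1}(0)$ and $c_1\in g^{-1}(1)$ and set $f(z)\defeq g\bigl(c_1 z+(1-z)c_0\bigr)$. Then $0$ and $1$ are \emph{super-attracting fixed points} of $f$, so $f$ is hyperbolic in the sense of \cite{hyperbolicboundedfatou}, and local connectivity of $J(f)$ together with the Jordan-domain property of every Fatou component follow directly from \cite[Corollary~1.9 and Theorem~1.4]{hyperbolicboundedfatou}. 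The disjoint-closure condition is then verified combinatorially: one chooses $c_0$ and $c_1$ to be non-adjacent vertices of the tree $g^{-1}([0,1])$, and a short argument shows that $\partial U_0\cap\partial U_1=\emptyset$, hence that all Fatou components have pairwise disjoint closures. Parts~(b) and~(c) come from \cite{boettcher}, which conjugates the disjoint-type map $g$ to $f$ on the set of points whose $g$-orbits remain outside a large disc.

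Your surgery proposal is therefore unnecessarily heavy, and it also contains a genuine gap: you yourself flag the disjoint-closure verification as ``the main obstacle'' and do not carry it out. Beyond that, the surgery you sketch is not a standard construction---grafting the full dynamics of a McMullen map, with its unbounded super-attracting basin at $\infty$, into a bounded subregion of $U$ requires a non-trivial setup that you do not specify---and you give no reason why the resulting entire function satisfies the hypotheses (class~$\mathcal{B}$, bounded local degree) under which results such as those of \cite{hyperbolicboundedfatou} deliver local connectivity and Jordan boundaries. The key insight you are missing is that the Speiser-class structure of $g$ already yields, via a single affine precomposition, a hyperbolic map with two distinct super-attracting basins; no deformation of the complex structure is needed at all.
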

   \begin{remark}[Remark~1]
     This phenomenon is also remarked upon (without proof) 
       in \cite[Paragraph~after~Corollary~1.9]{hyperbolicboundedfatou} 
      and \cite[Discussion~after~Theorem~2.11]{arclike}. 
   \end{remark}
   \begin{remark}[Remark~2]
     Similar phenomena are known for rational maps, where it is possible e.g.\ that the Julia set
       is a Sierpi\'nski carpet, while the map may have a Cremer fixed point; see~\cite{roeschnewton}.
       In contrast, our example is \emph{hyperbolic} (see Section~\ref{sec:speiser}). 
   \end{remark}

 Let us outline the basic idea of the various constructions mentioned above, as well
    as ours, and comment on the differences between them. 
    The connected components of the set $\mathcal{V}$ as in Definition~\ref{defn:disjointtype} are called
     the \emph{tracts} of $f$; each such tract $V$ is simply connected and mapped by $f$ as a universal covering. 
     The proofs of the above-mentioend theorems from~\cite{strahlen} and~\cite{arclike}~-- and also our proof of 
     Theorems~\ref{thm:pseudoarc1} and~\ref{thm:pseudoarc2}~-- proceed by first constructing a suitable 
     simply-connected domain $T$ with 
        \begin{equation}\label{eqn:halfplane}
           \overline{T}\subset \HH \defeq \{x + iy\colon a > 0 \} \end{equation}
             and a conformal isomorphism 
        \[ F \colon T\to \HH, \]
     where $T$ is disjoint from its $2\pi i\Z$-translates. The map $F$ is constructed so that the universal covering map
      \[ f \colon \exp(T)\to  \exp(\HH) = \{z\in\C\colon \lvert z\rvert > 1\}; \qquad f(\exp(\zeta)) = \exp(F(\zeta)) \] 
        and its ``Julia set'' 
          \[ J(f) \defeq \{ z\in\C\colon f^n(z)\text{ is defined and of modulus $\geq 1$ for all $n\geq 0$}\} \]
           have the desired properties. 
    Then an approximation result (see e.g.\ Theorem~\ref{thm:realisation} below)  is applied in order to obtain
    a disjoint-type entire function $g$ for which $J(g)$ is homeomorphic to $J(f)$. 

 The tract $T$ used for the proof of Theorem~\ref{thm:strahlen1} consists of a long straight half-strip, into which a countable
    number of 
   ``wiggles'' are inserted. Here by a wiggle, we mean a winding strip that first increases to a large real part $R$, decreases back down to a (much) smaller real part $r$,
     and finally begins to grow again. See Figure~\ref{fig:strahlen1}. By thickening the intermediate straight pieces, the corresponding
     function can be made to have lower order $1/2$; see \cite[Proposition~8.3]{strahlen} and~\cite[Theorem~1.10]{approximationhypdim}. 
     The construction in the proof of Theorem~\ref{thm:strahlen2}, which is
      only sketched in~\cite{strahlen}, is considerably more complicated.
      Here, the ``wiggles'' are each made up of further ``subwiggles'' 
       (see Figure~\ref{fig:strahlen2}). This construction is iterated to
       a greater and greater depth the further to the right of the tract
       these wiggles are inserted. 
       The proof of Theorem~\ref{thm:pseudoarc1} in~\cite{arclike} 
      uses an even more elaborate construction; see \cite[Figure~9]{arclike}. 

\begin{figure}
\begin{center}
 \includegraphics[width=\textwidth]{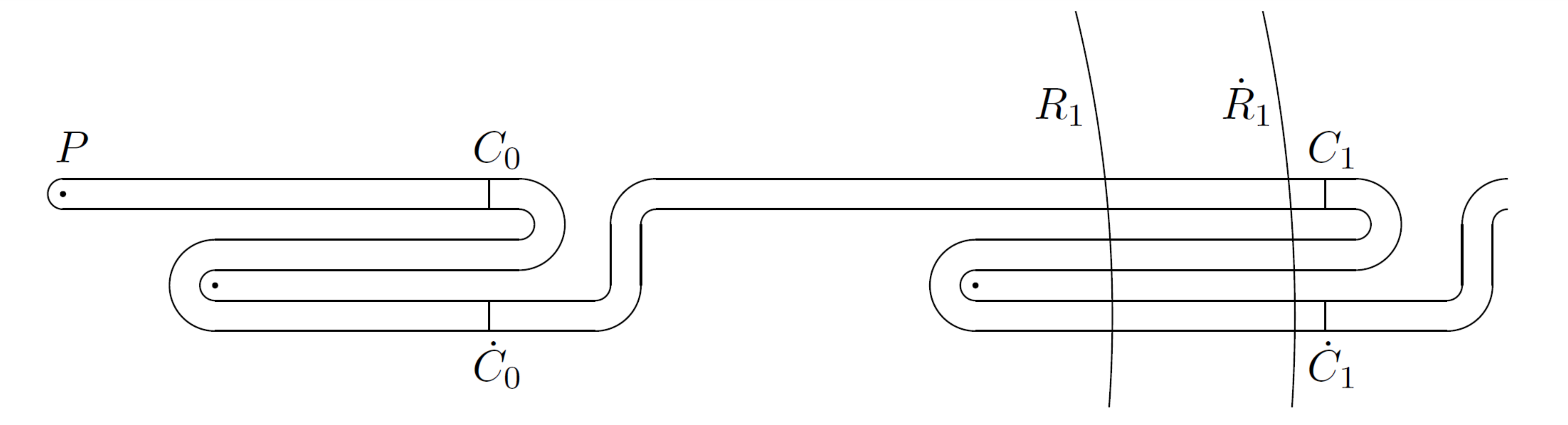}
\end{center}
\caption{\label{fig:strahlen1}The tract used in the proof of Theorem~\ref{thm:strahlen1}; reproduced from~\cite[Figure~5]{strahlen}.}
\end{figure}

\begin{figure}
\begin{center}
\includegraphics[width=\textwidth]{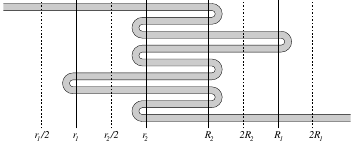}
\end{center}
\caption{\label{fig:strahlen2}The tract used in the proof of Theorem~\ref{thm:strahlen2}; reproduced from~\cite[Figure~8]{strahlen}.}
\end{figure}
      
   In contrast, our proof of Theorem~\ref{thm:pseudoarc1} in this paper can be made to work using exactly the same type of tract as in 
   Figure~\ref{fig:strahlen1} (naturally, with different choices of where to place the ``wiggles''). We use a slight 
   modification, also used in~\cite[Figures~42 and~43]{bishopfolding}, to 
   ensure lower order $1/2$; see Figure~\ref{fig:tracts}.
   
\subsection*{Structure of the paper} 
In Section~\ref{sec:topology}, we collect necessary background concerning arc-like continua and pseudo-arcs. We also give a brief overview of the history of the pseudo-arc. 
    Section~\ref{sec:models} introduces the class of conformal isomorphisms that are used as models for our construction, and discusses their fundamental properties.
    In Section~\ref{sec:projection1}, we introduce the \emph{one-dimensional projection}
    $\phi$
       of such a function $F$, a crucial tool for our proof that encodes the essential mapping 
    behaviour of the inverse $F^{-1}$ via
    a (usually non-injective) function of one real variable. Section~\ref{sec:condition} gives 
     a sufficient condition on the one-dimensional projection 
    to ensure that all Julia continua are pseudo-arcs. The remainder of the paper is 
    dedicated to constructing a conformal isomorphism with these properties. In
    Section~\ref{sec:projection}, we consider the mapping properties of the projection
    $\phi$ further. 
    
    Section~\ref{sec:tracts} sets up our main construction,
    which is inductive. The idea is that we start with a straight half-strip, and inductively continue inserting additional wiggles into this tract, which cause more and more
    ``crooked'' mapping behaviour of the one-dimensional projection. In this way, we obtain a sequence of simply connected domains $T_n$, and associated functions
    $F_n\colon T_n\to\HH$. A key fact in the construction is that the map $F_{n+1}$ will be close to $F_n$, as long as the new wiggle is inserted far enough to the right.
    Finally, the proofs of our main theorems are carried out in Section~\ref{sec:pseudoarcs}.

\subsection*{Notation.} As usual, $\C$ and $\Ch=\C\cup\{\infty\}$ 
  denote the complex plane and the Riemann sphere, respectively. 
  Recall from~\eqref{eqn:halfplane} that $\HH$ is the right half-plane. 
  Euclidean distance, diameter and length are denoted by $\dist$, $\diam$ and $\ell$, 
  respectively. The open Eulidean disc with centre $z_0$ and radius $r$ is denoted
    \[ D(z_0,r) \defeq \{z\in\C\colon \lvert z - z_0\rvert < r. \]
  The closure in $\C$ of a set $X\subset \C$ is denoted $\overline{X}$;
    the closure of $D(z_0,r)$ is denoted $\overline{D}(z_0,r)$. 

\section{Background from continuum theory}\label{sec:topology}

  A \emph{continuum} $X$ is a non-empty compact connected metric space. The continuum $X$ 
    is \emph{non-degenerate} if it contains more than one point. We refer to~\cite{continuumtheory} for an introduction to continuum theory. 
  
  \begin{defn}[Arc-like continua]
    A continuum $X$ is \emph{arc-like} if, for every $\eps>0$,  
     there exists a surjective continuous map $g\colon X \to [0,1]$ 
      such that 
        \[ \max_{t\in [0,1]} \diam_X(g^{-1}(t)) \leq \eps. \]
        Such $g$ is called an \emph{$\eps$-map}. 
  \end{defn}
  \begin{remark}
   Here $\diam_X$ denotes the diameter 
    with respect to the given metric on $X$. 
    Since $X$ is compact, equivalent metrics on $X$ give rise to the 
    same notion of arc-likeness, so that being arc-like is a topological property. All continua 
    considered in this paper are subsets of the Riemann sphere,
    and are endowed with the usual spherical metric. 
  \end{remark}

The second notion from continuum theory that we use in this paper 
  is that of \emph{hereditarily indecomposable continua}.   
  \begin{defn}[Hereditarily indecomposable continua]
    A continuum $X$ is \emph{hereditarily indecomposable} if,
      for any two subcontinua $X_1,X_2\subset X$ with $X_1\cap X_2 \neq \emptyset$,
     either 
      $X_1\subset X_2$ or $X_2\subset X_1$. 
  \end{defn}
 
   Knaster~\cite{knasterpseudo}, in his thesis, was the first to construct
     an example of a hereditarily indecomposable continuum. Moise~\cite{moisepseudo} answered a long-standing 
     open question of Mazurkiewicz by constructing
     a hereditarily indecomposable 
     continuum that is homeomorphic to each of its non-degenerate subcontinua. 
     Moise called
     this continuum a \emph{pseudo-arc}. Building on Moise's construction, Bing~\cite{bingpseudo}
     showed that Moise's pseudo-arc $X$ is \emph{homogeneous}; i.e.\ for any pair of points in $X$ there is a homeomorphism 
     of $X$ to itself that maps one to the other. This answered a long-standing question of Knaster and Kuratowski. 

  All these examples are arc-like by construction; Moise himself remarked that his
   construction was very similar to Knaster's and asked
    whether the two continua are homeomorphic. Bing~\cite{bingpseudocharacterisation}
     gave a positive answer by 
     showing that any two arc-like hereditarily indecomposable continua
     are homeomorphic. Thus we use the following definition. 
     
   \begin{defn}[Pseudo-arcs]
      A \emph{pseudo-arc} is a hereditarily indecomposable arc-like continuum. 
   \end{defn}
   
   We refer to~\cite{lewispseudoarc} for further background on the pseudo-arc. 

\section{A family of disjoint-type models}\label{sec:models}

 As in~\cite{strahlen} and~\cite{arclike}, we construct our examples by approximating
   suitable models,
   constructed in logarithmic coordinates. More precisely, these models have the following general form. 
   (See Section~\ref{sec:tracts} for the specific functions used in our construction.) 
   
   \begin{defn}[Class $\mathcal{H}$]\label{defn:classH}
     We denote by $\mathcal{T}$ the set of all simply-connected domains
        \begin{equation}\label{eqn:strip}
           T\subset S \defeq \{x + iy \colon x > 4, \lvert y \rvert < \pi \} \end{equation}
      with $5\in T$, where
      \begin{enumerate}[(a)]
        \item $T$ is unbounded, and $\partial T$ is locally connected; 
        \item there is only one access to infinity in $T$; i.e., any two curves connecting the same finite endpoint
           to infinity in $T$ are homotopic.\label{item:access}
      \end{enumerate}

     If $T\in\mathcal{T}$, then there is a unique conformal isomorphism
        $F\colon T\to \HH$ with $F(5)=5$ that extends continuously to $\infty$ with $F(\infty)=\infty$. We denote the class
        of all such functions $F$ by $\mathcal{H}$, and 
        the domain of $F\in\mathcal{H}$ by $T(F)$.            

     For $\nu>0$, we also consider the
        subclass $\mathcal{H}_{\nu}$ of $F\in\mathcal{H}$ such that 
           \[ \diam(F^{-1}(\{w\in\HH\colon \lvert w \rvert = R \})) \leq \nu \] 
           for all $R>0$. In other words, the geodesics of $T$ that are 
           perpendicular to $F^{-1}\bigl((0,\infty)\bigr)$ have uniformly bounded
           Euclidean diameter. We will call these the 
          \emph{``vertical'' geodesics} of $T$ associated to $F$. 
   \end{defn}
   \begin{rmk}
     Let $T\in\mathcal{T}$; we justify the existence of the conformal isomorphism $F$ in the definition. 
     There is a conformal isomorphism $F\colon T\to\HH$ satisfying $F(5)=5$, and this map is unique up to 
     postcomposition by a M\"obius transformation of $\HH$ fixing $5$. 
     By the Carath\'eodory-Torhorst theorem, \cite[Theorem~1.7]{pommerenke}, 
     $F^{-1}\colon \HH\to T$ extends
     continuously to
     $\overline{\HH}\cup\infty$. Property~\ref{item:access} 
     in the definition of $\mathcal{T}$ 
     implies that there exists exactly one point $\zeta \in \partial{\HH}\cup\infty$ 
     such that
     $F^{-1}(\zeta)=\infty$; by postcomposing $F$ with a M\"obius transformation, 
     we may assume that $\zeta=\infty$, which makes
     $F$ unique. Since $\overline{\HH}\cup\{\infty\}$ is compact, it follows that $F$ 
     itself extends continuously to infinity.
      (However, unless $T$ is a Jordan domain, $F$ does not extend continuously to 
      all finite boundary points.)
    \end{rmk}
    
 In order to estimate functions $F\in \mc{H}$, we frequently use that conformal isomorphisms are isometries of the corresponding
  hyperbolic metrics; compare~\cite{beardonminda} for background on plane hyperbolic geometry. 
  The density $\rho_{\HH}$ of the hyperbolic metric on the right half-plane is given by
     \[ \rho_{\HH}(\zeta) = \frac{1}{\re \zeta}. \]
  Furthermore, by monotonicity of the hyperbolic metric, 
    the density $\rho_T$ of any tract $T\in\mathcal{T}$ is bounded below by the density
    $\rho_{\tilde{S}}$ of the bi-infinite strip \[ \tilde{S}\defeq \{ a + ib\colon \lvert b\rvert < \pi \}\supset S\supset T. \] So 
     \begin{equation}\label{eqn:stripestimate} \rho_T(z) \geq \rho_{\tilde{S}}(z) = 1/(2\cos(\im z /2)) \geq 1/2 \end{equation}
   for all $z\in T$. (In fact,~$\rho_T(z) \geq 1/2$ holds for any simply-connected domain $T$ that is disjoint from its
    $2\pi i\Z$-translates, even without the assumption that $T\subset\tilde{S}$. Compare~\cite[Corollary~2.2]{ELestimate}.) In particular, functions in $\mc{H}$ uniformly expand the Euclidean metric. 
  
  \begin{lem}[Expansion for $F\in\mc{H}$]\label{lem:expansion}
    Suppose that $F\in\mathcal{H}$. Then $\lvert F'(z)\rvert \geq \re F(z)/2$ for all $z\in T$. In particular, 
       \[ \lvert F'(z)\rvert \geq 2 \]
     for all $z\in T$ with $\re F(z) \geq 4$.
  \end{lem}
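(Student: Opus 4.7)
The plan is to exploit the fact that the conformal isomorphism $F\colon T\to \HH$ is an isometry between the hyperbolic metric on $T$ and the hyperbolic metric on $\HH$. Concretely, for any $z\in T$ we have
\[ \rho_T(z) = \rho_{\HH}(F(z))\cdot \lvert F'(z)\rvert. \]
Rearranging, $\lvert F'(z)\rvert = \rho_T(z)/\rho_{\HH}(F(z))$, so the desired Euclidean expansion estimate reduces to a lower bound on $\rho_T$ and the explicit formula for $\rho_{\HH}$.

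For the lower bound, I would simply quote the estimate~\eqref{eqn:stripestimate} already established in the excerpt: since $T\subset S\subset \tilde S$ and $\tilde S$ is disjoint from its $2\pi i\Z$-translates, monotonicity of the hyperbolic metric gives $\rho_T(z)\geq \rho_{\tilde S}(z)\geq 1/2$ for all $z\in T$. Combined with $\rho_{\HH}(F(z)) = 1/\re F(z)$, this yields
\[ \lvert F'(z)\rvert \;=\; \frac{\rho_T(z)}{\rho_{\HH}(F(z))} \;\geq\; \frac{1}{2}\cdot \re F(z) \;=\; \frac{\re F(z)}{2}, \]
which is the first assertion. The ``in particular'' clause is then immediate: if $\re F(z)\geq 4$, then $\lvert F'(z)\rvert \geq 4/2 = 2$.

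There is essentially no obstacle here; the whole lemma is a two-line computation once one writes down conformal invariance of the hyperbolic metric and invokes~\eqref{eqn:stripestimate}. The only thing to double-check is that the inequality $\rho_T\geq 1/2$ is available for \emph{every} $F\in\mc{H}$ without further hypotheses, which is guaranteed by the inclusion $T\subset S\subset\tilde S$ built into Definition~\ref{defn:classH}.
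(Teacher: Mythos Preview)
Your proof is correct and is essentially identical to the paper's own argument: both use that $F$ is a hyperbolic isometry to write $\lvert F'(z)\rvert = \rho_T(z)/\rho_{\HH}(F(z))$, then plug in $\rho_T\geq 1/2$ from~\eqref{eqn:stripestimate} and $\rho_{\HH}(F(z))=1/\re F(z)$.
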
   
  \begin{proof} (See also~\cite[Lemma~2.1]{ELestimate}.)
    $F$ is an isometry between $T$ and $\HH$ with their respective
      hyperbolic metrics, so 
         \[ \lvert F'(z)\rvert = \frac{\rho_T(z)}{\rho_{\HH}(F(z))} \geq \frac{\re F(z)}{2}.\qedhere \]
  \end{proof}

  Given a function $F\in\mathcal{H}$, we may extend $F$ to a $2\pi i$-periodic function
    \[ \hat{F}\colon T + 2\pi i\Z\to \HH. \]
     Note that $\hat{F}^{-1}$ has countably many branches, defined by
      \[ \hat{F}_s^{-1}(z) \defeq F^{-1}(z) + 2\pi i s. \] We are interested in
     the set of points that remain in the half-plane under iteration of $\hat{F}$, 
     and its decomposition
      into individual components according to symbolic dynamics. 
  
 \begin{defn}[Julia continua of $\hat{F}$]\label{defn:juliacontinua}
   Let $F\in\mathcal{H}$ and let $\hat{F}$ be the $2\pi i$-periodic
     extension of $F$. We define
       \[ J(\hat{F}) \defeq \{z\in \HH\colon \hat{F}^n(z) \text{ is defined and belongs to $T+2\pi i \Z$ for all $n$} \}. \]
       
   If $\s = s_0 s_1 s_2\dots$ is a sequence of integers, we define
      \[ J_{\s}(\hat{F}) \defeq \{z\in J(\hat{F})\colon \hat{F}^n(z) \in T + 2\pi i s_n \text{ for all $n\geq 0$}\}. \] 
     If $J_{\s}(\hat{F})\neq \emptyset$, we call $\hat{J}_{\s}(\hat{F}) \defeq J_{\s}(\hat{F})\cup\{\infty\}$ a \emph{Julia continuum} of $\hat{F}$.
 \end{defn}
 
 The following is a special case of~\cite[Proposition~7.6]{arclike}.
 
 \begin{lem}[Julia continua are arc-like]
   Every Julia continuum of $\hat{F}$ is an arc-like continuum.
 \end{lem}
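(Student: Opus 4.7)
The plan is to construct $\varepsilon$-maps $\hat J_\s\to [0,1]$ for every $\varepsilon>0$, exploiting the nested conformal structure of the preimage tubes. For $n\ge 0$ set
\[
T^n_\s \defeq \{z \in T+2\pi is_0 : \hat F^k(z)\in T+2\pi is_k \text{ for } 0\le k\le n\};
\]
then $T^n_\s$ is the image of $T+2\pi is_n$ under the branch of $(\hat F^n)^{-1}$ selected by $\s$, the restriction $\hat F^n\colon T^n_\s\to T+2\pi is_n$ is a conformal isomorphism, $T^n_\s$ is simply connected with one access to infinity, and $\hat T^n_\s\defeq \overline{T^n_\s}\cup\{\infty\}$ is a compact connected subset of $\Ch$. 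The Julia continuum $\hat J_\s=\bigcap_n\hat T^n_\s$ is a nested intersection of continua, hence itself a continuum.

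Given $\varepsilon>0$, I would use the conformal isomorphism $\Phi_n\defeq \hat F^{n+1}\colon T^n_\s\to\HH$ together with a fixed continuous map $r\colon\HH\cup\{\infty\}\to[0,1]$ sending $\infty$ to $1$ whose level sets on $\HH$ are the semicircles $\{|w|=R\}\cap\HH$ (for instance $r(w)=|w|/(1+|w|)$). Setting $g_n\defeq r\circ\Phi_n$ with $g_n(\infty)=1$ yields a continuous map $\hat T^n_\s\to[0,1]$; since $g_n(\hat J_\s)$ is a compact subinterval of $[0,1]$ containing $1$, composing with an affine rescaling onto $[0,1]$ produces a surjective continuous $\tilde g_n\colon\hat J_\s\to[0,1]$. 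A fiber of $g_n$ over $t\in(0,1)$ lies in the ``vertical geodesic''
\[
\gamma_{R,n}\defeq \Phi_n^{-1}(\{|w|=R\})=(\hat F^n)^{-1}(\gamma^{(0)}_R+2\pi is_n),
\]
where $R=R(t)$ and $\gamma^{(0)}_R\defeq F^{-1}(\{|w|=R\})\subset T$.

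For $R$ in a bounded range these fibers shrink uniformly in Euclidean diameter: Lemma~\ref{lem:expansion} combined with the chain rule, together with the fact that $\re\hat F^k(z)>4$ for $0\le k\le n$ (because $T\subset S$), gives $|(\hat F^n)'|\ge 2^n$ throughout $T^n_\s$, and hence
\[
\len(\gamma_{R,n}) \le 2^{-n}\,\len(\gamma^{(0)}_R),
\]
which tends to $0$ uniformly in $R\le R_0$ as $n\to\infty$ for any fixed $R_0$.

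The main obstacle is the regime of large $R$: I need uniform-in-$n$ control near $\infty$, namely that as $R\to\infty$ the spherical diameter of $\gamma_{R,n}\cup\{\infty\}$ tends to $0$ uniformly in $n$. The key ingredients are that each inverse branch $\hat F_{s_k}^{-1}$ extends continuously with $\infty\mapsto\infty$, that every $T^n_\s$ lies in the fixed horizontal strip $T+2\pi is_0$ of height $2\pi$ (so inside $T^n_\s$ ``spherical closeness to $\infty$'' is equivalent to ``large real part''), and that monotonicity of the hyperbolic metric under $T^n_\s\subset T+2\pi is_0$ allows one to transfer the estimate from the base case $n=0$---where it is immediate from the continuous extension of $F^{-1}$ at $\infty$---to all $n$. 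Once this uniform bound is in hand, choosing $R_0$ so that the spherical contribution falls below $\varepsilon$ and then $n$ so large that $2^{-n}\sup_{R\le R_0}\len(\gamma^{(0)}_R)<\varepsilon$ yields the required $\varepsilon$-map $\tilde g_n$.
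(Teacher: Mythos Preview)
Your overall plan—nested intersection for the continuum part, then explicit $\varepsilon$-maps built from iterates of $\hat F$—matches the paper's. The difficulty lies in your choice of $\varepsilon$-map and the handling of the large-$R$ regime.

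The step you flag as the main obstacle is a genuine gap, and in fact the uniform-in-$n$ control you ask for is \emph{false} in general. Pick any two distinct finite points $z,w\in J_\s$ and set $M=\max(\re z,\re w)$. By Lemma~\ref{lem:separation}, $\lvert\hat F^{n}(z)-\hat F^{n}(w)\rvert\to\infty$; since both orbits remain in a horizontal strip of height $2\pi$, this forces $\max\bigl(\lvert\hat F^{n}(z)\rvert,\lvert\hat F^{n}(w)\rvert\bigr)\to\infty$. Thus for every $R_0$ there exist $n$ and $\zeta\in\{z,w\}$ with $\lvert\Phi_{n-1}(\zeta)\rvert\ge R_0$ while $\re\zeta\le M$, so $\gamma_{R,n-1}\cup\{\infty\}$ (with $R=\lvert\Phi_{n-1}(\zeta)\rvert\ge R_0$) has spherical diameter bounded below by the fixed positive spherical distance from $\zeta$ to $\infty$. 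The monotonicity-of-hyperbolic-metric idea cannot rescue this: there is no common basepoint lying in all the $T^n_\s$ against which to compare (for instance $5+2\pi i s_0\in T^n_\s$ only when $s_1=\dots=s_n=0$), and the conformal isomorphisms $\Phi_n$ are not normalised compatibly across different $n$.

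The paper sidesteps the whole issue with a simpler choice of $\varepsilon$-map: take $g_n(z)=\re\hat F^n(z)$ (with $g_n(\infty)=\infty$) instead of $\lvert\hat F^{n+1}(z)\rvert$. The fibre of $g_n$ over any finite value $t$ then lies in the $\hat F^{-n}$-image of a straight vertical segment of length at most $2\pi$ inside the strip $S+2\pi i s_n$, so by Lemma~\ref{lem:separation} its Euclidean diameter is at most $2^{-n}\cdot 2\pi$, \emph{uniformly in $t$}; the fibre over $\infty$ is the single point $\{\infty\}$. No case split is needed. (As a secondary remark, even your bounded-$R$ estimate leans on $\sup_{R\le R_0}\len(\gamma^{(0)}_R)$, which is not obviously finite for general $F\in\mathcal{H}$: the derivative bound $\lvert(F^{-1})'(w)\rvert\le 2/\re w$ diverges along the semicircle as it approaches $\partial\HH$. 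The paper's vertical segments have length exactly $2\pi$ regardless of $t$.)
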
 
 \begin{proof}
  We can write $J_{\s}(\hat{F})$ as a nested intersection of compact, connected sets as follows.
    For $n\geq j\geq 0$, define inductively 
     $X_n^n \defeq (\overline{S}+2\pi i s_n)\cup\{\infty\}$ and 
    \[ X_n^{j}\defeq \hat{F}_{s_n}^{-1}(X_{n+1}^{j}). \]
    (Recall from~\eqref{eqn:strip} that $S$ is a half-strip containing $T$.)
   Each $X_n^j$ is a continuum, as the image of a continuum under a continuous function,
   and $X_n^{j+1}\subset X_n^j$. So 
       \[ J_{\s}(\hat{F}) = \bigcap_{j=0}^{\infty} X_0^j \]
    is  a nested intersection of continua, and thus also a continuum.
    
   To see that $J_{\s}(\hat{F})$ is arc-like, define
      \[ g_n\colon \hat{J}_{\s}(\hat{F}) \to [4,\infty]; \qquad z \mapsto \begin{cases} \re \hat{F}^{n}(z)   &\text{if }z\in\C  \\      
                                                                                \infty  &\text{if } z = \infty.\end{cases} \]
   Then $g_n$ is a continuous function onto a non-degenerate compact interval. If $t$ is a finite point in the range of $g_n$, and
   $\zeta\in J_{\s}(\hat{F})$ with $g_n(\zeta)=t$, then 
      \[ \diam g_n^{-1}(t)  \leq \diam \hat{F}^{-n}( \{\re \zeta + it\colon (2s_n-1)\pi \leq t \leq (2s_n+1)\pi \} ) \leq 2^{-n}\cdot 2\pi. \]
      So the Euclidean diameter, and hence also the spherical diameter, of $g_n^{-1}(t)$
       tends to zero uniformly in $t$ 
      as $n\to\infty$, while $g_n^{-1}(\infty)$ consists of a single point.
      Therefore the continuum is arc-like. 
 \end{proof}
  
 By expansion of $F$, and since $T$ is contained in a strip of bounded height, the real parts of points in the same Julia continuum eventually separate under iteration. 
  \begin{lem}[Separation of orbits]\label{lem:separation}
  Let $F\in\mathcal{H}$, and let $z,w \in J(\hat{F})$ 
    belong to the same Julia continuum. Then 
     \[ \lvert \hat{F}^n(z) - \hat{F}^n(w)\rvert \geq 2^n\cdot \lvert z-w\rvert. \]
   In particular, $\lvert \re \hat{F}^n(z) - \re \hat{F}^n(w)\rvert \to \infty$. 
 \end{lem}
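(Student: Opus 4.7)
The main idea is to argue by induction on $n$, reducing to a one-step estimate of the form $\lvert \hat F(a) - \hat F(b)\rvert \geq 2\lvert a-b\rvert$ whenever $a$ and $b$ lie in a common tract $T+2\pi i k$ and their images both lie in $\{\re>4\}$. The obvious approach~-- integrating $\hat F'$ along a curve from $a$ to $b$ in $T+2\pi i k$~-- is not well suited to our purpose, since the straight segment $[a,b]$ need not lie in $T+2\pi i k$ and we only have a \emph{lower} bound on $\lvert\hat F'\rvert$. Instead, I would work with the appropriate inverse branch of $\hat F$, integrating along a segment in $\HH$, which is convex.

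More precisely, let $\psi_k\colon\HH\to T+2\pi ik$ be the conformal isomorphism $\zeta\mapsto F^{-1}(\zeta)+2\pi ik$; then $\hat F\circ\psi_k=\id_{\HH}$ and $\psi_k'(\zeta)=1/F'(F^{-1}(\zeta))$. When $z,w$ lie in the same Julia continuum $\hat J_{\s}(\hat F)$, the images $\hat F^{n+1}(z)$ and $\hat F^{n+1}(w)$ both lie in $T+2\pi is_{n+1}\subset S\subset\{\re>4\}$, so by convexity the segment $[\hat F^{n+1}(z),\hat F^{n+1}(w)]$ is contained in $\{\zeta\in\HH:\re\zeta>4\}$. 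On this segment, Lemma~\ref{lem:expansion} applied with $z=F^{-1}(\zeta)$ gives $\lvert F'(F^{-1}(\zeta))\rvert\geq\re\zeta/2\geq 2$, hence $\lvert\psi_{s_n}'\rvert\leq 1/2$. Integrating along the segment,
\[
\lvert \hat F^n(z)-\hat F^n(w)\rvert=\bigl\lvert\psi_{s_n}(\hat F^{n+1}(z))-\psi_{s_n}(\hat F^{n+1}(w))\bigr\rvert\leq\tfrac{1}{2}\lvert\hat F^{n+1}(z)-\hat F^{n+1}(w)\rvert.
\]
This is the inductive step, and iterating it starting from $n=0$ yields $\lvert \hat F^n(z)-\hat F^n(w)\rvert\geq 2^n\lvert z-w\rvert$.

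For the ``in particular'' statement, note that the orbits stay in horizontal strips: since $\hat F^n(z),\hat F^n(w)\in T+2\pi i s_n\subset S+2\pi is_n$, the imaginary parts differ by at most $2\pi$. So for $z\neq w$ the inequality $\lvert\hat F^n(z)-\hat F^n(w)\rvert\geq 2^n\lvert z-w\rvert$ forces the real parts to separate to infinity. There is no real obstacle beyond choosing the right inverse branch; the mildly subtle point is to remember that the segment along which one integrates lives in $\HH$ (where $\psi_{s_n}$ is globally defined), not in the tract itself.
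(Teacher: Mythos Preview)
Your proposal is correct and follows essentially the same approach as the paper: reduce to the one-step estimate by induction, apply the inverse branch $F^{-1}$ along the straight segment joining $\hat F(z)$ and $\hat F(w)$ in $\HH$, and use Lemma~\ref{lem:expansion} to bound $\lvert (F^{-1})'\rvert\leq 1/2$ on that segment (which has real part $>4$). Your explicit identification of the branch $\psi_{s_n}$ and the justification that the segment lies in $\{\re>4\}$ make the argument slightly more detailed than the paper's, but the substance is identical.
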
 
 \begin{proof}
   It is enough to prove the result for $n=1$; the general case then follows by induction. 
     Connect $z_1 \defeq \hat{F}(z)$ and $w_1 \defeq \hat{F}(w)$ 
     by a straight line segment $\gamma$.
     Then $(F^{-1})'(\zeta) \leq 1/2$ for all $\zeta\in\gamma$ by 
     Lemma~\ref{lem:expansion}. Since $z$ and $w$ are in the same Julia continuum,
     we have $z - w = F^{-1}(z_1) - F^{-1}(w_1)$. Thus 
       \[ \lvert z - w \rvert = \lvert F^{-1}(z_1) - F^{-1}(w_1)\rvert \leq
           \ell(F^{-1}(\gamma)) \leq \frac{1}{2} \lvert z_1 - w_1 \rvert.\qedhere \]
 \end{proof}  
 
 We shall also use the following fact. If $F\in\mathcal{H}_{\nu}$, and we fix finitely many points in the same Julia continuum, then under a sufficiently large iterate of $\hat{F}$, all but one 
  will eventually lie in a sector around the real axis. More precisely: 
 \begin{lem}[Orbits enter a sector]\label{lem:sector}
    For every $\nu>0$, there is $\delta>0$ with the following property.
      Suppose that $F\in \mathcal{H}_{\nu}$, and
       $z,w\in T(F)$, with $\lvert z - w\rvert \geq \delta$,
     $\lvert F(z) \rvert \geq \lvert F(w)\rvert\geq 4$, and $\lvert \im F(z) - \im F(w)\rvert \leq 2\pi$. Then 
     $\re F(z) \geq \lvert \im F(z)\rvert + 2\pi$. 
 \end{lem}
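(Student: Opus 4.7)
The plan is to prove the contrapositive: assuming the sector condition fails, i.e.\ $\re F(z) < \lvert\im F(z)\rvert + 2\pi$, I derive from the remaining hypotheses a bound on $\lvert z-w\rvert$ depending only on $\nu$, and then take $\delta$ larger than this bound. Write $\zeta_1 \defeq F(z)$, $\zeta_2 \defeq F(w)$ and $R_j \defeq \lvert\zeta_j\rvert$, so $R_1\geq R_2\geq 4$. The key move is to connect $z$ to $w$ by a three-part path in $T$ through the auxiliary points $z_j^* \defeq F^{-1}(R_j)$ on the curve $F^{-1}((0,\infty))\subset T$: from $z$ along the vertical geodesic $F^{-1}(\{\lvert\zeta\rvert=R_1\})$ to $z_1^*$, then along $F^{-1}((0,\infty))$ to $z_2^*$, and finally along $F^{-1}(\{\lvert\zeta\rvert=R_2\})$ to $w$.

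The two outer legs are bounded by the $\mc{H}_\nu$ hypothesis: each is contained in a vertical geodesic of Euclidean diameter at most $\nu$, so $\lvert z-z_1^*\rvert\leq\nu$ and $\lvert z_2^*-w\rvert\leq\nu$. For the middle leg, $F$ is a hyperbolic isometry and $\rho_T\geq 1/2$ by~\eqref{eqn:stripestimate}, so $\lvert(F^{-1})'(t)\rvert\leq 2\rho_\HH(t) = 2/t$ for real $t>0$; hence the arc $F^{-1}([R_2,R_1])$ has Euclidean length at most $\int_{R_2}^{R_1} 2/t\,dt = 2\log(R_1/R_2)$, which bounds $\lvert z_1^*-z_2^*\rvert$. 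The triangle inequality then gives
\[
  \lvert z-w\rvert \;\leq\; 2\nu + 2\log(R_1/R_2).
\]

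It remains to bound $R_1/R_2$ by an absolute constant. Write $y = \lvert\im\zeta_1\rvert$; the sector failure is $\re\zeta_1 < y + 2\pi$, and the constraint $\lvert\im\zeta_1-\im\zeta_2\rvert\leq 2\pi$ gives $\lvert\im\zeta_2\rvert \geq y - 2\pi$ whenever $y\geq 2\pi$. If $y\geq 6\pi$ then $R_2\geq\lvert\im\zeta_2\rvert\geq y-2\pi>0$, while $R_1^2 < (y+2\pi)^2 + y^2 \leq 2(y+2\pi)^2$, yielding $R_1/R_2\leq \sqrt{2}(y+2\pi)/(y-2\pi)\leq 2\sqrt{2}$; if $y<6\pi$ then $\re\zeta_1<8\pi$ forces $R_1<10\pi$, while $R_2\geq 4$ gives $R_1/R_2<5\pi/2$. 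In both cases $R_1/R_2\leq 5\pi/2$, so any $\delta > 2\nu + 2\log(5\pi/2)$ works. I expect the main obstacle to be selecting the right connecting path: the naive choice of moving $\zeta_1$ radially in $\HH$ to the $R_2$-semicircle and then along that semicircle introduces a factor $1/\cos(\arg\zeta_1)$ in the length estimate, which blows up precisely in the regime of interest (when $\zeta_1$ is near the imaginary axis). Routing the middle part through $F^{-1}((0,\infty))$ avoids this because along the positive real axis $\rho_\HH(t) = 1/t$ carries no angular factor, yielding the clean bound $\lvert(F^{-1})'(t)\rvert\leq 2/t$.
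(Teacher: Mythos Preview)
Your proof is correct and follows essentially the same route as the paper's: both move from $z,w$ to the ``real-axis'' preimages $F^{-1}(\lvert F(z)\rvert)$, $F^{-1}(\lvert F(w)\rvert)$ using the $\mathcal{H}_\nu$ hypothesis, and then exploit the strip bound $\rho_T\geq 1/2$ together with the explicit hyperbolic distance $\dist_\HH(R_1,R_2)=\log(R_1/R_2)$ on the positive real axis. The only differences are cosmetic: the paper argues directly (choosing $\delta=2(\nu+\log(2+3\pi/2))$, deducing $\lvert F(z)\rvert\geq(2+3\pi/2)\lvert F(w)\rvert$, and then obtaining the sector inequality by a short chain of estimates), whereas you argue by contrapositive and replace that final chain with a case analysis bounding $R_1/R_2$ from the sector failure.
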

 \begin{proof}
   Set $\delta\defeq 2\cdot (\nu + \log(2 + 3\pi/2))$ 
     and let $z$ and $w$ be as in the statement of the lemma. 
      Set $\tilde{z} \defeq F^{-1}(\lvert F(z)\rvert)$ and $\tilde{w}\defeq F^{-1}(\lvert F(w)\rvert )$. 
     Since $F\in\mathcal{H}_{\nu}$, we have 
      $\lvert\tilde{z}-z\rvert \leq \nu$, and likewise $\lvert \tilde{w}-w\rvert \leq \nu$. 
     In particular,
        \[ \lvert \tilde{z} - \tilde{w} \rvert \geq \delta - 2\nu, \] 
     and hence the hyperbolic distance in $T$ between $\tilde{z}$ and $\tilde{w}$ satisfies
        \[ \tilde{\delta} \defeq \dist_T(\tilde{z},\tilde{w}) \geq \frac{\delta-2\nu}{2} =
             \log\left( 2 + \frac{3\pi}{2}\right), \]
        by~\eqref{eqn:stripestimate} and choice of $\delta$. 
        
   Since $F$ is a conformal isomorphism, the hyperbolic distance in $\HH$
     between $\lvert F(z)\rvert$ and $\lvert F(w)\rvert$ is
    also $\tilde{\delta}$, so 
       \[ \lvert F(z)\rvert = \exp(\tilde{\delta}) \cdot \lvert F(w)\rvert
            \geq \left(2 + \frac{3\pi}{2}\right)\cdot \lvert F(w)\rvert. \] 
    Thus
        \begin{align*} \re F(z) &\geq  \lvert F(z)\rvert - \lvert \im F(z)\rvert \geq 
               (2 + 3\pi/2)\cdot \lvert F(w)\rvert - \lvert \im F(z)\rvert \\ &\geq 
                2\cdot\lvert \im F(w)\rvert   + 6\pi - \lvert \im F(z)\rvert \geq 
                 \lvert \im F(z)\rvert + 2\pi. \qedhere \end{align*}
 \end{proof} 

 To recover our results in the form that they are stated in the introduction, we 
   must pass from a conformal isomorphism 
   $F\in\mathcal{H}$ to an entire function $f$ with the same
   mapping properties. This is made possible by the following theorem,
   which is a combination of results of Bishop~\cite{bishopclassBmodels} and the
   second author~\cite{boettcher}, as explained in~\cite[Theorem~3.5]{arclike}.
   
 \begin{thm}[Realisation of models]\label{thm:realisation}
   Let $F\in\mathcal{H}$. Then there is a disjoint-type entire function $f$ such that every Julia continuum 
     of $f$ is homeomorphic to a Julia continuum of $F$, and vice versa. 
 \end{thm}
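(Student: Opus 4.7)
The plan is to deduce the result by combining two known mechanisms: first, a \emph{realisation} step that produces a transcendental entire function whose tract structure matches that of the given model $F$; and second, a \emph{conjugacy} step that identifies the Julia continua of this function with those of $\hat F$. Since $F\in\mathcal{H}$ is fixed, I would start by exponentiating the $2\pi i$-periodic extension $\hat F\colon T+2\pi i\Z\to\HH$ to obtain a holomorphic model
\[ f_0\colon \exp(T+2\pi i\Z)\to \{w\in\C:\lvert w\rvert>1\}, \qquad f_0(\exp\zeta)=\exp\hat F(\zeta), \]
which is a universal covering on each tract $\exp(T+2\pi i s)$. The sets $J_{\s}(\hat F)$ then correspond, under $\exp$, precisely to the components of the set of points that stay outside $\overline{\DD}$ under iteration of $f_0$, indexed by their itineraries.

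The first main step is to invoke Bishop's quasiconformal folding / class $\B$ realisation theorem from \cite{bishopclassBmodels}: given such a model $f_0$ defined on a union of simply connected tracts mapping as universal coverings onto the complement of a disc, there exists a transcendental entire function $g$ in the Eremenko--Lyubich class $\B$ which agrees with $f_0$ outside a bounded set (after a small quasiconformal adjustment), and whose singular values lie in a bounded set that we can place inside a bounded Jordan domain $D$ with $g(\overline{D})\subset D$. This makes $g$ of disjoint type in the sense of Definition~\ref{defn:disjointtype}. I would verify disjoint-type-ness by checking that the attracting basin swallows all singular values and that $g^{-1}(\C\setminus D)$ consists of the deformed tracts, mapping as universal coverings.

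The second main step, which is the substantial one and is where the second author's paper \cite{boettcher} enters, is to show that the symbolic dynamics of $g$ on its Julia set is the \emph{same} as that of $\hat F$, and that corresponding Julia continua are homeomorphic. The argument constructs, via a Böttcher-type coordinate at infinity, a homeomorphism between the set of points of $g$ that remain in the tracts forever and $J(\hat F)/2\pi i\Z$, which is equivariant with the respective symbolic dynamics. Uniform expansion (Lemma~\ref{lem:expansion}) plus the fact that tracts lie in strips of height $2\pi$ guarantees that each fibre of the natural symbolic projection collapses to a single continuum on both sides, so the conjugacy on itinerary space lifts to a bijection between Julia continua of $g$ and those of $\hat F$.

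The main obstacle is the last step: producing the conjugacy cleanly and checking it is a homeomorphism on each continuum. Rather than reproducing it, I would cite \cite[Theorem~3.5]{arclike}, which packages exactly this combination of Bishop's construction with the Böttcher conjugacy of \cite{boettcher} and states the conclusion in the form required here. The final output is a disjoint-type entire function $f$ together with a bijection $\s\mapsto C_{\s}$ between the itinerary set of $\hat F$ and the set of Julia continua of $f$, with each $C_{\s}$ homeomorphic to $\hat J_{\s}(\hat F)$.
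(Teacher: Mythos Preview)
Your proposal is correct and takes essentially the same approach as the paper: both reduce the statement to \cite[Theorem~3.5]{arclike}, which packages Bishop's realisation \cite{bishopclassBmodels} together with the B\"ottcher-type conjugacy of \cite{boettcher}. The paper's proof is simply more terse---it observes that the restriction of $\hat F$ to $\hat F^{-1}(\HH_1)$, where $\HH_1=\{a+ib:a>1\}$, is a disjoint-type function in the class $\Blog^{\operatorname{p}}$ of \cite[Definition~3.3]{arclike}, and then invokes \cite[Theorem~3.5]{arclike} directly---whereas you unpack the mechanism before citing the same result.
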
  
 \begin{proof}
   Let $\hat{F}$ be again the $2\pi i$-periodic extension of $F$. Let $\HH_{1}$ denote the right half-plane
     $\{a+ib\colon a> 1\}$. The restriction of 
     $\hat{F}$ to $\hat{F}^{-1}(\HH_{1})$ is a disjoint-type function in the class
     $\Blog^{\operatorname{p}}$ as defined in~\cite[Definition~3.3]{arclike}. The claim 
     follows from~\cite[Theorem~3.5]{arclike}. 
 \end{proof}

 \section{The one-dimensional projection}\label{sec:projection1}

Lemma~\ref{lem:sector} means that, for $F\in \mathcal{H}_\nu$, with some $\nu>0$, the mapping properties of $F$ are essentially related to the behaviour of $F^{-1}$ on the
  real axis. (See Lemma~\ref{lem:Fandphi}.) This motivates the following definition. 

\begin{defn}[One-dimensional projection]\label{defn:phi}
 Given $F\in\mathcal{H}$, we call 
   \begin{equation}\label{eqn:phidefn} \phi \colon [4,\infty] \to [4,\infty], \quad \phi(t) \defeq \re F^{-1}(t) \end{equation} 
   the \emph{one-dimensional projection} of $F$.
\end{defn}

\begin{lem}[Properties of $\phi$]\label{lem:phiproperties}
  For every $F\in\mathcal{H}$, the one-dimensional projection $\phi$ has the following properties.
  \begin{enumerate}[(a)]
    \item $\displaystyle{\lvert \phi(t_1) - \phi(t_2)\rvert \leq \frac{\lvert t_1 - t_2\rvert}{2}}$ when $t_1,t_2\geq 4$.\label{item:phicontraction}
    \item In particular, $\phi(t)<t$ for $t\geq 5$, and $\phi(t) < 6$ for $t\in[4,5]$.
    \label{item:phidecreasing}
    \item $\displaystyle{\phi(t) < 5 + 2(\log t - \log 5)}$ for all $t\geq 5$.\label{item:phiestimate}
    \item In particular $\phi(t) < t-1$ for $t\geq 7$ and $\phi(t) < t/2$ for $t\geq 15$.\label{item:simplephiestimates}
   \end{enumerate}
\end{lem}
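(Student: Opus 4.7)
The plan is to derive all four statements directly from the two expansion bounds already at our disposal: the derivative estimate in Lemma~\ref{lem:expansion} and the hyperbolic density bound \eqref{eqn:stripestimate}.

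For (a), note that for real $t \ge 4$ the point $F^{-1}(t) \in T$ satisfies $\re F(F^{-1}(t)) = t \ge 4$, so Lemma~\ref{lem:expansion} gives $|(F^{-1})'(t)| = 1/|F'(F^{-1}(t))| \le 2/t \le 1/2$. Integrating along the real segment from $t_1$ to $t_2$ and using $|\re z| \le |z|$ immediately yields (a). For (b), specializing to $t_2 = 5$ and using $\phi(5) = \re F^{-1}(5) = 5$ (which follows from the normalization $F(5)=5$) gives $\phi(t) \le 5 + (t-5)/2 < t$ for $t > 5$, and $\phi(t) \le 5 + 1/2 < 6$ for $t \in [4,5]$.

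For (c), the key observation is that $F$ is a hyperbolic isometry between $T$ and $\HH$. Since the hyperbolic distance in $\HH$ between $5$ and $t \ge 5$ equals $\int_5^t ds/s = \log(t/5)$, so is the hyperbolic distance in $T$ between $F^{-1}(5) = 5$ and $F^{-1}(t)$. Combined with $\rho_T \ge 1/2$ from \eqref{eqn:stripestimate}, any hyperbolic geodesic in $T$ from $5$ to $F^{-1}(t)$ has Euclidean length at most twice its hyperbolic length, and the straight-line Euclidean distance is no more than the Euclidean length of this geodesic; hence $|F^{-1}(t) - 5| \le 2 \log(t/5)$. Taking real parts yields $\phi(t) \le 5 + 2(\log t - \log 5)$, and strictness for $t>5$ comes from the strict monotonicity of the hyperbolic metric under the proper inclusion $T \subsetneq \tilde{S}$.

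Part (d) is then a routine one-variable calculus check starting from the bound in (c): the difference $t - 1 - [5 + 2\log(t/5)]$ is positive at $t = 7$ and has derivative $1 - 2/t > 0$ for $t > 2$, while $t/2 - [5 + 2\log(t/5)]$ is positive at $t = 15$ with derivative $1/2 - 2/t > 0$ for $t > 4$. I do not foresee any real obstacle here---the entire lemma is a straightforward unpacking of the pointwise and metric expansion bounds from Section~\ref{sec:models}. The only minor subtlety is obtaining strict inequality in (c) for $t > 5$, which requires invoking strict monotonicity of the hyperbolic metric on simply-connected domains.
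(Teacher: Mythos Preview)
Your proposal is correct and follows essentially the same argument as the paper: part~(a) via the derivative bound from Lemma~\ref{lem:expansion}, part~(b) by specializing to $t_2=5$, part~(c) via the hyperbolic isometry together with~\eqref{eqn:stripestimate}, and part~(d) as a numerical/calculus check. You even supply slightly more detail than the paper does, namely the justification of strict inequality in~(c) via strict monotonicity of the hyperbolic metric and the explicit derivative computation in~(d).
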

\begin{proof}
  If $t_1,t_2\geq 4$, then by Lemma~\ref{lem:expansion}, we have $\lvert (F^{-1})'(x)\rvert \leq 1/2$ for $x\in [t_1,t_2]$. Hence 
     \[ \lvert \phi(t_1) - \phi(t_2)\rvert = \lvert \re (F^{-1}(t_1) - F^{-1}(t_2))\rvert \leq 
              \lvert F^{-1}(t_1) - F^{-1}(t_2)\rvert \leq \frac{\lvert t_1 - t_2\rvert}{2}. \]
    This proves~\ref{item:phicontraction}. 
    In particular, if $t\geq 5$, then $\phi(t)-5\leq (t-5)/2$, and therefore
    $\phi(t)<t$. Similarly, if $\lvert t-5\rvert \leq 1$, then
       $\lvert \phi(t) - 5\rvert \leq 1/2$. This proves~\ref{item:phidecreasing}.  
              
   Now let $t\geq 5$. By~\eqref{eqn:stripestimate}, and since $F(5)=5$, we have
      \[ \lvert F^{-1}(t) - 5\rvert \leq 2\dist_T(F^{-1}(t),5) = 2\dist_{\HH}(t,5) = 2(
         \log t - \log 5). \]
    Hence 
     \[ \phi(t) \leq 5 + \lvert F^{-1}(t) - 5\rvert \leq 5 + 2(\log t - \log 5). \]
     This proves~\ref{item:phiestimate}, and it is
      easy to check numerically that~\ref{item:simplephiestimates} follows. 
f\end{proof}

 We also need the following fact about the relation between $F^{-1}(z)$ and $\phi(\re z)$, for points $z$ that are not necessarily on the real axis. 
  \begin{lem}\label{lem:Fandphi}
   Let $z\in\HH$ with $\re z \geq 4$ and $\lvert \im z \rvert \leq \re z + 2\pi$. 
    Then 
      \[ \lvert \re F^{-1}(z) - \phi(\re z)\rvert \leq 6. \]
  \end{lem}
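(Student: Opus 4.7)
The plan is to connect $z$ to the real point $\re z$ by the vertical segment $\gamma(t) \defeq \re z + it$, and bound the Euclidean length of $F^{-1}(\gamma)$ directly. Since $\re z \geq 4$, every point of $\gamma$ has real part at least $4$, so $\gamma \subset \HH$ and $F^{-1}$ is holomorphic on a neighbourhood of $\gamma$.

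Along this segment, Lemma~\ref{lem:expansion} (applied to $F$ at the point $F^{-1}(\zeta)$) yields
\[
 \lvert (F^{-1})'(\zeta)\rvert = \frac{1}{\lvert F'(F^{-1}(\zeta))\rvert} \leq \frac{2}{\re \zeta} = \frac{2}{\re z}
\]
for all $\zeta\in\gamma$. Integrating along $\gamma$, whose Euclidean length is $\lvert \im z\rvert$, gives
\[
 \lvert F^{-1}(z) - F^{-1}(\re z)\rvert \leq \ell(F^{-1}(\gamma)) \leq \frac{2\lvert \im z\rvert}{\re z}.
\]
The hypothesis $\lvert \im z\rvert \leq \re z + 2\pi$, together with $\re z \geq 4$, then yields
\[
 \lvert F^{-1}(z) - F^{-1}(\re z)\rvert \leq 2 + \frac{4\pi}{\re z} \leq 2 + \pi < 6.
\]

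Since $\re F^{-1}(\re z) = \phi(\re z)$ by Definition~\ref{defn:phi}, and since real parts are $1$-Lipschitz, the conclusion follows:
\[
 \lvert \re F^{-1}(z) - \phi(\re z)\rvert \leq \lvert F^{-1}(z) - F^{-1}(\re z)\rvert \leq 2+\pi < 6.
\]
There is no real obstacle here; the only point requiring care is verifying that the straight vertical segment from $\re z$ to $z$ stays inside $\HH$ (so that $F^{-1}$ is defined on it and Lemma~\ref{lem:expansion} applies along the image), which is immediate from $\re z \geq 4 > 0$. The stated bound of $6$ is in fact quite loose — any constant strictly greater than $2+\pi$ would do — which suggests the authors have chosen it for later convenience rather than sharpness.
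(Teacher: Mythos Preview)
Your proof is correct and is essentially the same as the paper's: the paper phrases the computation in terms of hyperbolic distance, bounding $\dist_{\HH}(z,\re z)\leq \lvert\im z\rvert/\re z$ along the same vertical segment and then using~\eqref{eqn:stripestimate} to convert to Euclidean distance in $T$, which unwinds to exactly your integration of $\lvert (F^{-1})'\rvert\leq 2/\re\zeta$. Both arguments arrive at the same bound $2+\pi<6$.
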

  \begin{proof}
     The assumption implies that 
       \[ \dist_{\HH}(z,\re z) \leq \frac{\lvert \im z \rvert}{\lvert \re z\rvert} \leq 1 + \frac{2\pi}{\re z}
         \leq 1 + \frac{\pi}{2} < 3. \]
          By~\eqref{eqn:stripestimate}, 
      \begin{align*}
      \lvert \re F^{-1}(z) - \phi(\re z)\rvert &\leq 
       \lvert F^{-1}(z) - F^{-1}(\re z)\rvert \\ &\leq 
       2\dist_T(F^{-1}(z), F^{-1}(\re z)) 
        = 2 \dist_{\HH}(z,\re z) < 6. \qedhere \end{align*}
  \end{proof}

\section{A sufficient condition for pseudo-arc continua}\label{sec:condition}

 \begin{defn}[Quadruples]
   By a \emph{quadruple}, we always mean a tuple of four different real numbers $\geq 9$. We denote such 
     a quadruple in increasing order as $Q = (A<B<C<D)$. 
     We also define the \emph{size} of the quadruple as
       \[ \lvert Q\rvert = \min( A-5,B-A,C-B,D-C). \]
 \end{defn}

Following~\cite{hendersonpseudoarc}, we use the following terminology. 

\begin{defn}[Crooked iterates of $\phi$]
We say that $\phi^k$ maps 
  a  closed interval $I$ \emph{crookedly} over 
   the quadruple $(A<B<C<D)$, if 
       $\phi^k(I)\supset [A,D]$, and furthermore the convex hull of $I\cap \phi^{-k}(B)$ intersects the
       convex hull of $I\cap \phi^{-k}(C)$. 
 \end{defn}
\begin{remark}
  The final condition means that two points of $I\cap \phi^{-k}(B)$ surround a point of $I\cap \phi^{-k}(C)$, or vice versa. 
\end{remark}

The following result gives a sufficient condition for all Julia continua of a function $F\in\mathcal{H}_{\nu}$ to be a pseudo-arc. 
 Compare~\cite[Lemma~1]{hendersonpseudoarc}. 

\begin{thm}[Sufficient condition for pseudo-arcs Julia continua]\label{thm:pseudoarcrealisation}
  Let $F\in\mathcal{H}_{\nu}$, for some $\nu>0$. Suppose that
   there exists a constant $K>0$ such that the following property holds for all integer quadruples
     $Q = (A<B<C<D)$ with $\lvert Q\rvert \geq K$. 
     
   There exists $k_0=k_0(Q)\in \N$ such that, for every $k\geq k_0$ and
    every compact interval $I\subset [6,\infty)$ with
     $\phi^{k}(I)\supset [A,D]$, 
     the iterate     $\phi^k$ maps $I$ crookedly over $Q$. 
     
     Then every Julia continuum of the $2\pi i$-periodic extension $\hat{F}$ is a pseudo-arc. 
\end{thm}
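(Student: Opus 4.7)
Julia continua are already arc-like by the previous lemma, so the task reduces to hereditary indecomposability. The plan is to adapt Henderson's argument for crooked inverse limits to our two-dimensional dynamics, using Lemmas~\ref{lem:sector} and~\ref{lem:Fandphi}: together they tell us that, far to the right, a backward step of $\hat F$ moves the real part by $\phi$ up to a bounded error. Crookedness of $\phi^k$ will therefore translate into crookedness of a chain-like structure on Julia continua, obstructing any proper decomposition.

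Assume for contradiction that some $\hat J_{\s}$ admits subcontinua $L_1,L_2$ with $L_1\cap L_2\neq\emptyset$ and neither contained in the other. Fix $p\in L_1\setminus L_2$, $q\in L_2\setminus L_1$, write $L=L_1\cup L_2$, and put $I^n\defeq\{\re w: w\in\hat F^n(L)\}$ and $I^n_i\defeq\{\re w: w\in\hat F^n(L_i)\}$. By Lemma~\ref{lem:separation} the length of $I^n$ grows without bound, and by Lemma~\ref{lem:sector} for $n$ large $\hat F^n(L)$ lies in a real-part-dominated sector where Lemma~\ref{lem:Fandphi} controls the discrepancy between $\re\hat F^{-1}$ and $\phi\circ\re$. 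Since $L_1,L_2$ are proper subcontinua of $L$ meeting at a common point, $I^n_1$ and $I^n_2$ overlap without either containing the other, so for $n$ large one can place an integer quadruple $Q=(A<B<C<D)$ of size $\geq K$ inside $I^n$ with, say, $B\in I^n_1\setminus I^n_2$ and $C\in I^n_2\setminus I^n_1$. Take $k\geq k_0(Q)$ and set $I\defeq I^{n+k}$; applying Lemma~\ref{lem:Fandphi} along the $k$ pull-backs one checks that $\phi^k(I)\supset [A,D]$ (after enlarging $K$ by an absolute constant), and so the hypothesis gives that $\phi^k$ maps $I$ crookedly over $Q$. This furnishes three points of $I$ whose $\phi^k$-images read $B,C,B$ (or $C,B,C$) in interlaced order along $I$; selecting for each such real part a preimage in $\hat F^{n+k}(L)$ and pulling it back $k$ steps through the branches of $\hat F^{-1}$ prescribed by $\s$ produces three points in $\hat F^n(L)$ whose real parts lie within a uniform bound of $B,C,B$. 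By the choice of $Q$ these three points lie alternately in $\hat F^n(L_1)\setminus \hat F^n(L_2)$, $\hat F^n(L_2)\setminus \hat F^n(L_1)$, $\hat F^n(L_1)\setminus\hat F^n(L_2)$. Pulling back a further $n$ steps gives three points of $L$ whose membership in $L_1$ versus $L_2$ alternates along the chain-cover ordering that $I^{n+k}$ induces on $\hat F^{n+k}(L)$, an oscillation incompatible with any decomposition of the connected set $L$ into just the two subcontinua $L_1,L_2$.

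\textbf{Main obstacle.} The delicate step is converting the purely combinatorial interlacing supplied by the crookedness hypothesis into a genuine topological contradiction. Two accounting issues dominate. First, the $O(1)$ discrepancies of Lemma~\ref{lem:Fandphi} must be uniformly absorbed into the gaps $B-A,\,C-B,\,D-C\geq K$ over all $k$ pull-backs, so that ``real part close to $B$'' really does force ``inside $I^n_1\setminus I^n_2$''. Second, the $2\pi i\Z$-ambiguity in branches of $\hat F^{-1}$ must be resolved consistently with the address $\s$, so that the interlacing lifts into the single continuum $\hat J_{\s}$ rather than leaking into a neighbouring translate; this is where the class $\mathcal{H}_{\nu}$ assumption and Lemma~\ref{lem:sector} play their essential role.
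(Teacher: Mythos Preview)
Your overall strategy matches the paper's: argue by contradiction from two overlapping, non-nested subcontinua $L_1,L_2$, push forward until the pieces separate in real part, place an integer quadruple in the resulting configuration, invoke the crookedness hypothesis at a later time, and transport the crookedness back using the comparison between $\re\circ\hat F^{-1}$ and $\phi\circ\re$ furnished by Lemma~\ref{lem:Fandphi}. However, the proposal has a genuine gap at the very last step, and a smaller one earlier.

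\textbf{The final contradiction does not hold as stated.} You produce three points of $\hat F^n(L)$ lying alternately in $\hat F^n(L_1)\setminus\hat F^n(L_2)$, $\hat F^n(L_2)\setminus\hat F^n(L_1)$, $\hat F^n(L_1)\setminus\hat F^n(L_2)$, and declare this ``an oscillation incompatible with any decomposition of the connected set $L$ into just the two subcontinua $L_1,L_2$''. But it is not incompatible: a union of two overlapping continua can perfectly well contain points alternating between $L_1\setminus L_2$ and $L_2\setminus L_1$ in any pattern. The paper's contradiction is different, and the missing ingredient is \emph{connectedness of one of the pieces}. After lifting the two outer real parts $\zeta_1<\zeta_2$ (those with $\phi^{k}(\zeta_i)=B$) to points $\tilde\zeta_1,\tilde\zeta_2\in\hat F^{n+k}(L)$ and pulling back $k$ steps, one shows (exactly as you do, using that real part $\approx B$ forces membership in $L_1$) that $\tilde\zeta_1,\tilde\zeta_2\in\hat F^{n+k}(L_1)$. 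Now the key move you omit: since $\hat F^{n+k}(L_1)$ is a continuum containing points of real parts $\zeta_1$ and $\zeta_2$, it must contain a point $\tilde\omega$ with real part exactly the intermediate value $\omega$. This $\tilde\omega$ lies in $\hat F^{n+k}(L_1)$ \emph{by construction}, hence its pullback $w$ lies in $\hat F^n(L_1)$; but the error estimate gives $\re w\geq C-12$, while the quadruple was placed so that all of $\hat F^n(L_1)$ has real part $<C-12$. That is the contradiction: a single point of $L_1$ forced by connectedness to sit where $L_1$ cannot reach. Your argument instead lifts $\omega$ to an arbitrary point of $\hat F^{n+k}(L)$, correctly identifies it as lying in $L_2$, and then has no contradiction left to extract.

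\textbf{A secondary gap.} You assert that for large $n$ the projections $I^n_1,I^n_2$ ``overlap without either containing the other''. This does not follow formally from $L_1\not\subset L_2$; the real-part projection can collapse a great deal. The paper secures the needed configuration via Lemma~\ref{lem:separation}: choose $z^j\in L_j\setminus L_{1-j}$ and $\eps>0$ with $\overline D(z^j,\eps)\cap L_{1-j}=\emptyset$; then for $n$ large the disc $\overline D(\hat F^n(z^j),2^n\eps)$ misses $\hat F^n(L_{1-j})$ and, having radius $>2\pi$, separates the strip. Hence $\hat F^n(L_{1-j})$ lies entirely to one side of $\re\hat F^n(z^j)$, and since the two images share a point, it must be the side containing that intersection. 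This is precisely what yields $I^n_1$ extending strictly farther left and $I^n_2$ strictly farther right (up to relabelling), and is what allows the quadruple to be placed with the buffers you need.
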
 
\begin{remark}
 By \cite[Corollary~8.7]{arclike}, it would be enough to prove the result for the invariant set of the conformal isomorphism $F$, i.e.\ for the
  Julia continuum of $\hat{F}$ 
  at address $\s = 0 0 0 \dots$. To remain self-contained, we instead give an argument for
  arbitrary Julia continua, avoiding the results of \cite[Section~8]{arclike}. 
\end{remark}
\begin{proof}
  Suppose, by contradiction, that $\hat{J}_{\s}(\hat{F})$ is a Julia continuum, and 
   that there are subcontinua $\mathcal{Z}^0,\mathcal{Z}^1 \subset \hat{J}_{\s}(\hat{F})$ 
   whose intersection is non-empty, but such that neither is contained in the other.
   Set $\mathcal{C}\defeq \mathcal{Z}^0\cup \mathcal{Z}^1$;
     $\mathcal{C}$ is a continuum by assumption. 
    We define
      \[ \mathcal{Z}^j_n \defeq \hat{F}^n(\mathcal{Z}^j) \subset T(F) + 2\pi i s_n \]
      and $\mc{C}_n\defeq \mathcal{Z}^0_n\cup \mathcal{Z}^1_n$. 
      
        Let $j\in\{0,1\}$ and let $z_0\in \mc{Z}^j\setminus \mc{Z}^{1-j}$. 
         Suppose that $\eps>0$ is so small  that 
         \begin{equation}\label{eqn:discdoesnotintersect} 
           \overline{D}(z_0,\eps)\cap\mc{Z}^{1-j}=\emptyset. \end{equation} By  Lemma~\ref{lem:separation}, 
         \begin{equation} \label{eqn:discexpansion}
             \overline{D}(z_n, 2^n\cdot \eps) \cap \mc{Z}^{1-j}_n = \emptyset,  \end{equation}
         where $z_n = \hat{F}^n(z_0)$.  
         If $2^n > 2\pi/\eps$, the disc in~\eqref{eqn:discexpansion} contains a straight line segment 
         of length $4\pi$ centred at $z_n$, which separates the strip $S$ into two parts, one to the left and one to the right of $z_n$. 
         In particular, $\mc{Z}^{1-j}$ is either entirely to the left or entirely to the right
         of $z_n$. 
         
   \begin{claim}[Claim 1]
     For sufficiently large $n_0$, and $j=0,1$, there are continua 
      $\tilde{\mathcal{Z}}_{n_0}^j\subset \mathcal{Z}_{n_0}^j\setminus\{\infty\}$ 
      such that  
       $\bigcap_{j=0}^1\tilde{\mathcal{Z}}_{n_0}^j\neq\emptyset$,
       such that
       $\tilde{\mathcal{Z}}_{n_0}^j \not\subset \tilde{\mathcal{Z}}_{n_0}^{1-j}$
       for $j=0,1$, and such that additionally 
        \begin{equation}\label{eqn:realparts} \re \hat{F}^n(z) \geq 2\pi \lvert s_{n+n_0}\rvert + \pi \end{equation}
       for all $n\geq 0$ and all $z\in \tilde{\mc{Z}}_{n_0}^0\cup \tilde{\mc{Z}}_{n_0}^1$. 
   \end{claim}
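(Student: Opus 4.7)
My plan is to pick a finite common point $q\in \mc{Z}^0\cap \mc{Z}^1$, iterate enough times that its orbit satisfies the required lower bound on real parts, and then set $\tilde{\mc{Z}}^j_{n_0}\defeq\hat{F}^{n_0}(K^j)$ for a subcontinuum $K^j\subset \mc{Z}^j\setminus\{\infty\}$ that contains both $q$ and a witness $z_0^j\in\mc{Z}^j\setminus\mc{Z}^{1-j}$ to non-nesting (such witnesses exist because neither continuum is contained in the other). In the degenerate case $\mc{Z}^0\cap \mc{Z}^1=\{\infty\}$, I would first replace $q$ by a common finite point obtained from sequences in each continuum tending to $\infty$ via Lemma~\ref{lem:separation} and arc-likeness of $\hat{J}_{\s}$.

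The key analytic input is that, for every $z\in J_{\s}\setminus\{\infty\}$, the quantity $\re \hat{F}^m(z)-2\pi\lvert s_m\rvert$ tends to $+\infty$, and does so uniformly on compact subsets of $J_{\s}\setminus\{\infty\}$. To establish this, I would fix any auxiliary point $w\in J_{\s}\setminus\{z\}$. Lemma~\ref{lem:separation} yields $\lvert \hat{F}^n(z)-\hat{F}^n(w)\rvert\geq 2^n\lvert z-w\rvert$, which eventually exceeds the constant $\delta(\nu)$ from Lemma~\ref{lem:sector}; since $\hat{F}^n(z)$ and $\hat{F}^n(w)$ lie in the same translate $T+2\pi i s_n$, their imaginary parts differ by less than $2\pi$, and Lemma~\ref{lem:sector}, applied to the pair translated back into $T$, forces whichever of the two images has the larger $\lvert F\rvert$-value into the sector $\{\re\zeta\geq\lvert\im\zeta\rvert+2\pi\}$, giving $\re \hat{F}^{n+1}(\cdot)\geq 2\pi\lvert s_{n+1}\rvert+\pi$. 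A bootstrapping argument that swaps the roles of $z$ and $w$ in subsequent iterations covers both points simultaneously, and compactness yields uniform convergence.

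Given these estimates, I would choose $K^j\subset \mc{Z}^j\setminus\{\infty\}$ a subcontinuum containing both $q$ and $z_0^j$, and pick $n_0$ large enough that $\re \hat{F}^m(z)\geq 2\pi\lvert s_m\rvert+\pi$ for every $z\in K^0\cup K^1$ and every $m\geq n_0$. Setting $\tilde{\mc{Z}}^j_{n_0}\defeq\hat{F}^{n_0}(K^j)$ produces the required continuum in $\mc{Z}^j_{n_0}\setminus\{\infty\}$: it contains the common point $q_{n_0}$, and by injectivity of $\hat{F}$ on each periodic strip, $z_{n_0}^j=\hat{F}^{n_0}(z_0^j)$ lies in $\tilde{\mc{Z}}^j_{n_0}\setminus \mc{Z}^{1-j}_{n_0}$, witnessing non-nesting; the real-part inequality holds by construction.

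The main obstacle I anticipate is establishing the existence of $K^j$, i.e.\ showing that $\infty$ does not separate $q$ from $z_0^j$ inside the arc-like continuum $\mc{Z}^j$. This is morally clear because $\infty$ plays the role of an endpoint of each Julia continuum, but a fully rigorous argument will likely require the chainability of $\hat{J}_{\s}$ recalled in Section~\ref{sec:topology}, or else a direct construction of $K^j$ built from the separation discs $\overline{D}(z_n,2^n\eps)$ already exhibited in the paragraph preceding the claim.
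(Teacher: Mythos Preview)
Your approach differs from the paper's in a way that creates genuine difficulties. The central analytic claim---that $\re \hat{F}^m(z)-2\pi|s_m|\to+\infty$ uniformly on compact subsets of $J_{\s}\setminus\{\infty\}$---is not established by the argument you sketch. Lemma~\ref{lem:sector} only places the point with the \emph{larger} $|F|$-value into the sector; your ``bootstrapping by swapping roles'' does not help the point with the smaller value, and there is no reason every point of a given compact set $K$ eventually has larger $|F|$-value than your auxiliary point $w$. Indeed, the point of $K$ that stays furthest left may never be forced into the sector by this pairwise comparison. So as stated, the key uniform estimate is a gap, and the two obstacles you flag (a finite common point, and the existence of $K^j$ avoiding $\infty$) remain unresolved on top of it.

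The paper avoids all three issues by reversing your construction: rather than \emph{including} the witness points $z^j\in\mc{Z}^j\setminus\mc{Z}^{1-j}$ in the subcontinua, it \emph{removes} small discs $D(z^j,\eps_1)$ around them and takes $\tilde{\mc{Z}}^j$ to be a component of $\mc{Z}^j\setminus D(z^j,\eps_1)$ containing a point of $\mc{Z}^0\cap\mc{Z}^1$. Boundary bumping immediately gives $\tilde{\mc{Z}}^j\not\subset\mc{Z}^{1-j}$, with no need to connect prescribed finite points. Since $\tilde{\mc{Z}}^0\cup\tilde{\mc{Z}}^1$ avoids \emph{both} discs (one by construction, the other by choice of $\eps_1$), the disc-separation argument established just before the claim shows that, for $n$ large, $\hat F^n(\tilde{\mc{Z}}^0\cup\tilde{\mc{Z}}^1)$ lies entirely to one side of each of $z^0_n,z^1_n$. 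Choosing a second point $\tilde{z}^j\in D(z^j,\eps_1)\cap\mc{Z}^j$ and applying Lemma~\ref{lem:sector} to the pair $(z^j,\tilde{z}^j)$ puts at least one of them into the sector; the subcontinuum, being separated from both, must lie to the right of this sector point for some $j$, yielding~\eqref{eqn:realparts}. The same separation argument bounds the subcontinuum to the left of some $z^j_n$, which simultaneously shows it is bounded---so the degenerate case $\mc{Z}^0\cap\mc{Z}^1=\{\infty\}$ is handled implicitly. In short, the paper applies Lemma~\ref{lem:sector} only to a \emph{finite} set of witness pairs and transfers the conclusion to the whole subcontinuum via separation, rather than trying to prove a uniform sector estimate directly.
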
 
   \begin{subproof} 
    Pick two points $z^0,z^1$ with 
       $z^j\in\mathcal{Z}^j\setminus (\mathcal{Z}^{1-j}\cup\{\infty\}$
       and let $\eps_1$ be so small that~\eqref{eqn:discdoesnotintersect} holds for both $z^0$ and $z^1$. 
       Let $\tilde{\mathcal{Z}}^j$ be a connected component of
       $\mathcal{Z}^j\setminus D(z^j,\eps_1)$ that contains a point of
       $\mathcal{Z}^0\cap \mathcal{Z}^1$. 
       By the boundary bumping theorem \cite[Theorem~5.6]{continuumtheory}, 
       $\tilde{\mathcal{Z}}^j$ intersects $\overline{D}(z^j,\eps_1)$, and 
        in particular $\tilde{\mathcal{Z}}^j \not\subset \mathcal{Z}^{1-j}$.
       
    Now pick a second point $\tilde{z}^j\in \mathcal{Z}^j\cap D(z^j,\eps_1)\setminus\{z^j\}$, for $j=0,1$. By Lemmas~\ref{lem:separation} and~\ref{lem:sector}, 
     there is $n_0$ such that 
      \begin{equation}\label{eqn:sectorinproof}
        \max(\re z^j_n, \re \tilde{z}^j_n) \geq \min(\lvert \im z^j_n\rvert , \lvert \im \tilde{z}^j_n\rvert) + 2\pi \geq 2\pi \lvert s_n\rvert + \pi, \end{equation}
      for $j =0,1$ and all $n\geq n_0$. We may additionally choose $n_0$ 
      so large that 
        \[ 2^{n_0} > \frac{2\pi}{\eps_1 - \max_j \lvert z^j - \tilde{z}^j}. \]
        If $z\in\tilde{\mc{Z}}^0\cup \tilde{\mc{Z}}^1$, then by~\eqref{eqn:discexpansion}, 
        for each $n\geq n_0$, 
       $\hat{F}^n(z)$ is to the right of 
      the left-hand side of~\eqref{eqn:sectorinproof} for either $j=0$ or $j=1$. 
       Similarly, $\hat{F}^n(z)$ is to the left of $\re z^0_n$ or of $\re z^1_n$.
      Hence the continua $\tilde{Z}_{n_0}^j \defeq F^{n_0}(\tilde{Z}^j)$ have the desired
      properties. 
   \end{subproof} 
  
  For simplicity of notation, 
   we assume in the following
   that $\mc{Z}^j$ were chosen to satisfy~\eqref{eqn:realparts} to begin with, 
   with $n_0=0$. 
   \begin{claim}[Claim 2]
      Let $z_0\in \mc{C}$ and set $z_k \defeq \hat{F}^{k}(z_0)\in \mc{C}_{n}$ for $k\geq 0$. If $0\leq k \leq n$, then 
       \[ \lvert \phi^{n-k}(\re z_{n})  - \re z_k\rvert \leq 12. \]
\end{claim}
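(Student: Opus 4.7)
The plan is to run a descending induction on $k$, bootstrapping a one-step estimate $|\re z_k - \phi(\re z_{k+1})| \leq 6$ via the uniform contraction $|\phi(t_1) - \phi(t_2)| \leq |t_1 - t_2|/2$ from Lemma~\ref{lem:phiproperties}\ref{item:phicontraction}. Since the contraction factor is $1/2$, the errors accumulated over the $n-k$ steps form a geometric series bounded by $6 \cdot 2 = 12$, which is exactly the desired estimate.

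For the one-step estimate, observe that $z_k \in T(F) + 2\pi i s_k$ together with $\hat{F}(z_k) = z_{k+1}$ forces $z_k = F^{-1}(z_{k+1}) + 2\pi i s_k$, whence $\re z_k = \re F^{-1}(z_{k+1})$. I then apply Lemma~\ref{lem:Fandphi} to $w = z_{k+1}$. The hypothesis $\re z_{k+1} \geq 4$ is immediate from $T(F) \subset S$. The hypothesis $|\im z_{k+1}| \leq \re z_{k+1} + 2\pi$ follows from two observations: the inclusion $z_{k+1} \in T(F) + 2\pi i s_{k+1} \subset S + 2\pi i s_{k+1}$ gives $|\im z_{k+1}| < 2\pi |s_{k+1}| + \pi$, while assumption~\eqref{eqn:realparts} (which, thanks to the simplification in the proof preceding this claim, holds with $n_0 = 0$) yields $\re z_{k+1} \geq 2\pi |s_{k+1}| + \pi$. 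Lemma~\ref{lem:Fandphi} then produces the one-step bound $|\re z_k - \phi(\re z_{k+1})| \leq 6$.

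With the one-step estimate in hand, I set $e_k \defeq |\phi^{n-k}(\re z_n) - \re z_k|$, so $e_n = 0$. Writing $\phi^{n-k}(\re z_n) = \phi(\phi^{n-k-1}(\re z_n))$ and using the triangle inequality together with the contraction property of $\phi$ (applicable since $\phi$ maps $[4,\infty)$ into itself, by Lemma~\ref{lem:phiproperties}\ref{item:phidecreasing}) and the one-step estimate, I obtain the recursion
\[ e_k \leq |\phi(\phi^{n-k-1}(\re z_n)) - \phi(\re z_{k+1})| + |\phi(\re z_{k+1}) - \re z_k| \leq \frac{1}{2} e_{k+1} + 6. \]
Descending induction from $k = n$ gives $e_k \leq 6 \sum_{j=0}^{n-k-1} 2^{-j} < 12$. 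The proof is really just bookkeeping; the only point deserving care is the verification that the orbit $(z_k)$ sits in the sector where $F^{-1}$ is accurately described by $\phi$, and this was arranged precisely by~\eqref{eqn:realparts} in Claim~1.
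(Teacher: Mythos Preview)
Your proof is correct and follows essentially the same approach as the paper's: both arguments combine the one-step estimate from Lemma~\ref{lem:Fandphi} (legitimised via~\eqref{eqn:realparts}) with the contraction $|\phi(t_1)-\phi(t_2)|\leq |t_1-t_2|/2$, and then run a descending induction on $k$. The paper phrases the induction as passing directly from the bound $12$ at step $k$ to $6+6=12$ at step $k-1$, whereas you write the equivalent recursion $e_k\leq \tfrac12 e_{k+1}+6$ and sum the geometric series; the content is the same.
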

     \begin{subproof}
       Set $r_k \defeq \phi^{n-k}(\re z_{n})$, $k=0,\dots, n$; we prove the claim by induction over $n-k$; the case $k=n$ is trivial. 
         Supppose the claim holds for some $k\in \{1,\dots,n\}$. 
          We have
         \[ \lvert r_{k-1} - \phi(\re z_k)\rvert = \lvert \phi(r_k) - \phi(\re z_k)\rvert \leq 6 \]
           by the inductive hypothesis and Lemma~\ref{lem:phiproperties}~\ref{item:phicontraction}. By~~\eqref{eqn:realparts} we also have
           \[ \lvert \im z_k\rvert \leq \lvert 2\pi \lvert s_{n}\rvert + \pi \leq \re z_k, \]
           so we may apply Lemma~\ref{lem:Fandphi} and obtain that
             \[ \lvert \phi(\re z_k) - \re z_{k-1}\rvert \leq 6. \]
             The claim follows for $k-1$, and the proof by induction is complete.
     \end{subproof}
       
    Choose $z^0,z^1$ with $z^j\in\mathcal{Z}^j\setminus \mathcal{D}$
       and let $\eps$ be again so small that~\eqref{eqn:discdoesnotintersect} holds for both $z^0$ and $z^1$.
       Define $z_n^j \defeq \hat{F}^n(z^j)$, and 
       fix $k$ so large that $2^k > (2\pi + 25 + 3K)/ \eps$.  
       Let $j\in \{0,1\}$ be such that $\re z_k^j < \re z_k^{1-j}$. 
       We define an integer quadruple $Q\defeq (A < B < C <D)$ by setting 
     \[ A \defeq \lceil \re z_k^j + 12 + K \rceil; \quad B \defeq A + K; \quad D \defeq \lfloor \re z_k^{1-j} - 12\rfloor; \quad C \defeq D - K. \] 
      By choice of $k$, we have 
          \begin{equation}\label{eqn:kseparation}   \re z_k^j + 25 + 3K \leq   \re \hat{F}^k(z) \end{equation}
         for all $z\in \mathcal{Z}^{1-j}$. Similarly,
         $\mc{Z}^j_k$ is to the left of
         $\re z_k^{1-j}-25-3K$. In particular, 
         $\lvert Q\rvert \geq K$, all 
         points in $\mathcal{Z}^{1-j}_k$ have real part greater than $B + 12$, and all points in $\mathcal{Z}^{j}_k$ have real part
      less than $C-12$. 
     
     Let $n_0$ be as in the hypothesis for the quadruple $Q$, and set $n \defeq k + n_0$. By Claim 2, and choice of $A$ and $B$, we have 
      \[ \phi^{n_0}(\re z_n^j) \leq \re z^j_k + 12 \leq A, \quad \phi^{n_0}(\re z_n^{1-j}) \geq \re z_k^{1-j} -12 \geq D. \]
      So if $I$ is the interval bounded by $\re z_n^j$ and $\re z_n^{1-j}$, then $\phi^{n_0}(I)\supset [A,D]$. 
      By hypothesis, this means that $I$ contains either two
      elements of $I\cap \phi^{-n_0}(B)$ that surround
      an element of $I\cap \phi^{-n_0}(C)$, or vice versa. 
      To fix our ideas, let us suppose the former, so there are $\zeta_1 < \omega < \zeta_2$ such that
      $\phi^{n_0}(\zeta_1)=\phi^{n_0}(\zeta_2) = B$, and $\phi^{n_0}(\omega)=C$. 
      (The opposite case is analogous.) 
      
      Let $\tilde{\zeta}_1$ be a point in $\mathcal{C}_{n}$ with real part $\zeta_1$, and similarly
       $\tilde{\zeta}_2$. Let $z_1$, $z_2$ be the corresponding preimages under $\hat{F}^{n_0}\colon \mc{C}_{n_0}\to \mc{C}_n$. 
         By Claim 2, 
         \[ \re z_1, \re z_2 \leq B + 12. \]
         Thus $z_1, z_2\in \mathcal{Z}_k^j$, and hence
         $\tilde{\zeta}_1,\tilde{\zeta}_2\in \mathcal{Z}_n^j$. 
         Since $\mathcal{Z}_n^j$ is a continuum, it 
         also contains a point $\tilde{\omega}$ with real part $\omega$. The 
         point $w\in \mathcal{Z}_k^j$ with $\hat{F}^{n_0}(w) = \omega$ satisfies
         $\re w \geq C - 12$ by Claim 2. This is a contradiction, since $\mathcal{Z}^j_k$ is to the left of $C-12$. 
           The proof of the theorem is complete.
      \end{proof}

\section{Mapping properties of the one-dimensional projection}\label{sec:projection}

Throughout this section, let $F\in\mc{H}$, and let $\phi$ be its one-dimensional projection. We introduce some notions and concepts that are 
  adapted from~\cite{hendersonpseudoarc}. 

\begin{defn}[Minimal maps to an interval]\label{defn:Un}
 Let $I = [A,D] \subset [6,\infty)$ be a closed interval, and let 
   $Q = (A < B < C < D)$ be a quadruple.
   
     Let $n\geq 0$. We 
      say that a closed interval $J \subset [4,\infty]$ is mapped \emph{minimally} over 
       $I$ by $\phi^n$ if
      $\phi^n(J)\supset I$, 
       and no smaller subinterval of $I$ has this property. 
       In this case, we also say that $J$ is mapped minimally over $Q$. 
     
   For $n\geq 0$, we define 
     \[ \U_n(Q,\phi) \defeq \U_n(I,\phi) \defeq  \{ J\subset [4,\infty] \colon J \text{ is mapped minimally to $I$ by $\phi^n$}\}. \]
     
    We also define 
     \[ \hat{\U}_n(Q,\phi) \defeq \{J\in \U_n(Q,\phi)\colon J \text{ is mapped crookedly over $Q$}\}. \]
\end{defn}

 \begin{lem}[Properties of the sets $\mc{U}_n$]\label{lem:Unproperties}
   Let $I = [A,D]$ and $Q$ be as in Definition~\ref{defn:Un}, and let $n\geq 0$. 
  \begin{enumerate}[(a)]
     \item If $\tilde{J}$ is a closed interval with $\phi^n(\tilde{J})\supset I$, then $\tilde{J}$ contains an element of $\mc{U}_n(I,\phi)$.\label{item:minimalsubset}
     \item If $J=[a,d]\in\mathcal{U}_n(I,\phi)$, then 
        $\phi^n(\{a,d\}) = \{A,D\}$ and $\{a,d\} = \phi^{-n}(\{A,D\})\cap \tilde{J}$.\label{item:Unendpoints}
     \item The interiors of intervals in $\mathcal{U}_n$ are pairwise disjoint.\label{item:Undisjoint}
    \item If $J\in\mathcal{U}_n(I,\phi)$, then $\phi^n(J) = I$.\label{item:mappingto}
    \item If $J\in \mathcal{U}_n(I,\phi)$, then $J\subset [6,\infty)$.\label{item:totheright}
     \item No interval $J\in\mathcal{U}_n(I,\phi)$ contains an interval of $\mathcal{U}_{\hat{n}}(I,\phi)$ for $\hat{n}\neq n$.\label{item:notnested}
     \item Each $\mathcal{U}_n$ is finite.\label{item:Unfinite}
     \item If $J\in \mathcal{U}_n(I,\phi)$, then $\phi^k(J)\in \mathcal{U}_{n-k}(I,\phi)$ and 
                 $J\in \mathcal{U}_k(\phi^k(J))$ for $k=0,\dots,n$.\label{item:Uncomposition} 
     \item In particular, for $k\leq n$, \label{item:Undecomposition}
        \[ \mathcal{U}_n(I,\phi) = \bigcup_{J\in \mathcal{U}_{n-k}(I,\phi)} \mathcal{U}_{k}(J,\phi). \]
     \item If $J\in \U_n(Q,\phi)$ and $\phi^k(J)\in \hat{\U}_{n-k}(Q,\phi)$ for some $k\leq n$, then $J\in\hat{\U}_n(Q,\phi)$.\label{item:crookedpropagation}
     \item If $\U_n(Q,\phi) = \hat{\U}_n(Q,\phi)$, then $\U_k(Q,\phi) = \hat{\U}_n(Q,\phi)$ for all $k\geq n$.\label{item:allcrooked}
  \end{enumerate}
 \end{lem}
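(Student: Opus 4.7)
The plan is to prove the items in an order where each can use the previous ones: $(a)$, $(b)$, $(c)$, $(d)$, $(e)$, $(h)$, $(i)$, $(f)$, $(g)$, $(j)$, $(k)$. The two workhorses are the intermediate value theorem applied to the continuous map $\phi^n$ together with the minimality defining $\mathcal{U}_n(I,\phi)$, and the contraction bound from Lemma~\ref{lem:phiproperties}(a), which yields $\lvert J\rvert \geq 2^n\lvert I\rvert$ for every $J\in\mathcal{U}_n(I,\phi)$.

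Item (a) is a routine compactness/Zorn argument on nested chains of closed subintervals of $\tilde{J}$. Parts (b)--(d) all follow from the same template: pick $x_A, x_D \in J$ with $\phi^n(x_A)=A$ and $\phi^n(x_D)=D$; the intermediate value theorem on the subinterval of $J$ bounded by $x_A$ and $x_D$ shows it covers $I$, so minimality forces it to equal $J$, yielding (b). The same reasoning shows that two distinct minimal intervals with overlapping interiors would admit a proper subinterval of one of them still covering $I$, giving (c); and any $y\in J$ with $\phi^n(y)\notin I$ would, together with an endpoint of $J$ mapping to $A$ or $D$, produce an interior preimage of an endpoint of $I$ by the intermediate value theorem, contradicting (b) and hence establishing (d). For (e), Lemma~\ref{lem:phiproperties}(b) gives $\phi([4,6]) \subset [4,6)$, so $\phi^n$ cannot map any point of $[4,6]$ into $\{A,D\}\subset [9,\infty)$; combined with (b), this forces $J \subset (6,\infty)$. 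For (h), given a putative proper subinterval $J'_k \subsetneq \phi^k(J)$ with $\phi^{n-k}(J'_k)\supset I$, I set $\tilde{a}=\min\{x\in J : \phi^k(x)\in J'_k\}$ and define $\tilde{d}$ analogously; the intermediate value theorem (using that $\phi^k$ takes the endpoints of $J$ to those of $\phi^k(J)$) forces $\phi^k(\tilde{a})$ and $\phi^k(\tilde{d})$ to be the two distinct endpoints of $J'_k$, so $[\tilde{a},\tilde{d}]\subsetneq J$ has $\phi^n$-image covering $I$, contradicting minimality. The second assertion of (h) is immediate, and (i) follows by double inclusion from (h).

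The main obstacle is (f), since the length estimate alone does not suffice. Applying $\phi^{\min(n,\hat{n})}$ via (h) reduces the statement to the single claim: for $m\geq 1$, no element $K\in\mathcal{U}_m(I,\phi)$ contains $I=[A,D]$, and no such $K$ is contained in $I$. The latter follows immediately from $\lvert K\rvert\geq 2^m\lvert I\rvert > \lvert I\rvert$. For the former, if $K=[\alpha,\delta]\supset I$ then $\alpha\leq A$, while $\alpha\geq 6$ by (e); the estimates $\phi(t)<t$ for $t\geq 5$ and $\phi(t)<6$ for $t\in[4,5]$ from Lemma~\ref{lem:phiproperties}(b) together imply $\phi^m(\alpha)<\alpha$ for $m\geq 1$ (distinguishing whether the orbit of $\alpha$ ever dips into $[4,5]$), and hence $\phi^m(\alpha)<A$, contradicting $\phi^m(\alpha)\in\{A,D\}$ from (b). Part (g) follows from continuity of $\phi$ at $\infty$, which inductively forces $\phi^{-n}([A,D])$ to be bounded in $[4,\infty)$; combined with (c) and the length lower bound, this forces any $\mathcal{U}_n$ to be finite. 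For (j), given a crooked arrangement $\beta_1<\gamma<\beta_2$ in $\phi^k(J)$ with $\phi^{n-k}(\beta_i)=B$ and $\phi^{n-k}(\gamma)=C$, I pick any preimage $\tilde{\gamma}\in J$ of $\gamma$ and apply the intermediate value theorem on the two halves of $J$ separated by $\tilde{\gamma}$; using that $\phi^k$ sends the endpoints of $J$ to those of $\phi^k(J)$ by (h) and (b), this produces preimages $\tilde{\beta}_1<\tilde{\gamma}<\tilde{\beta}_2$ of $\beta_1,\beta_2$ that witness crookedness of $J$ (the other configuration is analogous). Part (k) follows by induction on $k-n$ from (j) and (h).
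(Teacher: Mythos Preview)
Your proof is correct and follows essentially the same approach as the paper: Zorn's lemma for (a), minimality plus the intermediate value theorem for (b)--(d), the invariance of $[4,6)$ under $\phi$ for (e), the decrease $\phi(t)<t$ for (f), boundedness of $\phi^{-n}(I)$ plus a lower length bound for (g), and pulling back a crooked configuration via the intermediate value theorem for (j). The only organisational difference is that you prove (h) before (f) and (g) and then use (h) to reduce (f) to the case $\min(n,\hat n)=0$; the paper instead proves (f) directly from $\phi(t)<t$ before establishing (h). Both routes rely on the same ingredient, and your reduction is perfectly valid. One small remark on your argument for (h): the assertion that $\phi^k(\tilde a)$ and $\phi^k(\tilde d)$ are the two \emph{distinct} endpoints of $J'_k$ need not hold literally, but what you actually need---that $[\tilde a,\tilde d]\subsetneq J$ and $\phi^k([\tilde a,\tilde d])\supset J'_k$---follows directly from your definitions of $\tilde a,\tilde d$ together with the fact (from the ``immediate'' second half of (h) and (b)) that $\phi^k(\{a,d\})$ are the endpoints of $\phi^k(J)$.
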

 \begin{proof}
   Since we are only using a single function $\phi$ in this section, we suppress it from notation; i.e., we write $\mathcal{U}_n(J) \defeq \mathcal{U}_n(J,\phi)$. 
     We also simply write $\mathcal{U}_n = \mathcal{U}_n(I)$, for $I=[A,D]$, where this does not cause confusion.

    Claim~\ref{item:minimalsubset} 
     is a consequence of Zorn's lemma: consider the collection $\mathcal{J}$ of all
     intervals $J\subset \tilde{J}$ with $\phi^n(J)\supset I$. This
     collection is partially ordered by inclusion, and any descending chain 
     has a lower bound (its intersection), which is also in $\mathcal{J}$. 
     Hence there is a minimal element, which is in $\mathcal{U}_n$ by definition. 
   
   If $J\in\mathcal{U}_n$, then $\phi^{-n}(A)\cap J, \phi^{-n}(D)\cap J\neq \emptyset$. 
     Moreover, any subinterval of $J$ 
     bounded by an element of $\phi^{-n}(A)$ and an element of $\phi^{-n}(D)$ also maps over $I$. Hence the only points in $J$ that can map to
     $A$ and $D$ are the two endpoints, proving~\ref{item:Unendpoints}.   
     
 The next two claims follow immediately from~\ref{item:Unendpoints}. Indeed,   
   if two intervals intersect, but are not equal, then at least one has to contain
   one of the endpoints of the other. This never occurs for two intervals of
   $\mathcal{U}_n$, by~\ref{item:Unendpoints}, giving~\ref{item:Undisjoint}. 
   Similarly, if $J=[a,d]\in \mathcal{U}_n$, then
   $\phi^n\bigl((a,d)\bigr)$ is a connected subset of $\R\setminus \{A,D\}$ that contains
   $(A,D)$, and therefore $\phi^n\bigl((a,d)\bigr) = (A,D)$ and $\phi^n([a,d])=[A,D]$,
   as claimed in~\ref{item:mappingto}. 
   
  We have $\phi([4,6))\subset [4,6)$ by~Lemma~\ref{lem:phiproperties}~\ref{item:phidecreasing}. 
    If $J\in \mathcal{U}_n$ for some $n\geq 0$, then $\phi^n(J) = I \subset [6,\infty)$ 
    by~\ref{item:mappingto}, and therefore $J\cap [4,6)=\emptyset$. This 
    proves~\ref{item:totheright}. Claim~\ref{item:notnested} follows similarly
    from Lemma~\ref{lem:phiproperties}. Indeed, recall that $\phi(t)<t$ for $t\geq 6$.
    If $J\in\mathcal{U}_n$, then $\phi^n(J) = I$ by~\ref{item:mappingto}, and
    therefore $\phi^{\hat{n}}(J)$ to the left of $D$ for $\hat{n}>n$
    and to the right of $A$ for $\hat{n}<n$. Thus $J$ does not 
     map over $I$ under $\phi^{\hat{n}}$ for $n\neq \hat{n}$. 
    
     Next, 
       note that $\lim_{t\to\infty} \phi(t) = \infty$, and thus
         $\lim_{t\to\infty} \phi^n(t) = \infty$ for fixed $n$. 
         In particular, $\phi^{-n}(I) \supset \bigcup \mathcal{U}_n$ 
         is bounded for every $n$, and 
         $\delta \defeq \dist(\phi^{-n}(A),\phi^{-n}(D))>0$. 
         In conclusion, the intervals in $J\in\mathcal{U}_n$ have diameters bounded
         from below by $\delta$, 
         their interiors are disjoint, and their union is bounded. Hence 
        $\mathcal{U}_n$ is finite as claimed in~\ref{item:Unfinite}.
     
    Next, we prove~\ref{item:Uncomposition}. Let $J\in \mathcal{U}_n(I)$ and
     consider $J_k\defeq \phi^k(J)$. 
     Then $\phi^{n-k}(J_k) = \phi^n(J) = I$.  
     Clearly $J$ is mapped minimally to $J_k$ by $\phi^k$; otherwise,
     there would be a proper sub-interval also mapped to $J_k$ by $\phi^k$, 
     and hence to $I$ by $\phi^n$. So $J\in \mathcal{U}_{k}(J_k)$. 
     Similarly, suppose that $J_k$ was not mapped minimally by $\phi^{n-k}$. 
     By~\ref{item:minimalsubset}, there is a 
      proper sub-interval $\tilde{J}_k\subset J_k$ which belongs to
      $\mathcal{U}_{n-k}(I)$. But then 
        $J$ contains a proper sub-interval that is mapped over $\tilde{J}_k$, which
        would itself be mapped over $I$ by $\phi^{n-k}$.
         This is a contradiction to minimality of $J$, and proves that 
         $J_k\in \mathcal{U}_{n-k}(I)$. 

   The inclusion ``$\subset$'' in~\ref{item:Undecomposition} follows directly from~\ref{item:Uncomposition}. For the converse inclusion, let 
     $J\in \mathcal{U}_{n-k}(I)$ and $L\in \mathcal{U}_{k}(J)$. Then $\phi^n(L) = I$. So $L$ contains an element $\tilde{L}$ of $\mathcal{U}_n(I)$. 
     We have $\phi^k(\tilde{L}) \in \mathcal{U}_{n-k}(I)$ by~\ref{item:Uncomposition}
      and $\phi^k(\tilde{L})\subset \phi^k(L)=J$. So $\phi^k(\tilde{L})=J$. But also
     $\tilde{L}\in\mathcal{U}_k(\phi^k(\tilde{L})) = \mathcal{U}_k(J)$. So we must have $\tilde{L}=L$, as required. 
     
    Next suppose that $\phi^k(J)$ is mapped crookedly over $Q$; say the preimages $t_{B},\tilde{t}_B$ of $B$ surround
     a preimage $t_C$ of $C$. Then, by~\ref{item:minimalsubset}, 
       a subinterval $\tilde{J}$ of $J$ is mapped minimally over $[t_B,\tilde{t}_B]$ by $\phi^{k}$. In particular,
       $\tilde{J}$ contains a preimage of $t_C$ under $\phi^{k}$, which is an element of
       $\phi^{-n}(C)\cap J$. By~\ref{item:Unendpoints}, the subinterval $\tilde{J}$ 
       is bounded by a preimage of $t_B$ and a preimage of $\tilde{t}_B$ under $\phi^{k}$, both of which
       are elements of $\phi^{-n}(B)$. Hence $J$ is mapped crookedly by $f^n$, as claimed. 

     The final claim~\ref{item:allcrooked} is an immediate corollary of~\ref{item:crookedpropagation}. 
 \end{proof}

\section{Tracts and wiggles}\label{sec:tracts}
  In view of Theorem~\ref{thm:realisation} and Lemma~\ref{lem:Unproperties}~\ref{item:allcrooked}, our goal is to construct a function $F$ such that, for each of the countably
   many quadrilaterals $Q$ in Theorem~\ref{thm:realisation}, there is a number $n$ such that all members of $\U_n(Q,\phi)$ are mapped crookedly by $\phi^n$. 
   To do so, we begin with the strip $S$, and inductively introduce a 
      sequence of ``wiggles'' to the tract over increasing real parts; these wiggles allow us to deal with each quadrilateral $Q$ in turn. The desired
      model function $F$ is then obtained as the limit of these maps.

\begin{figure}
\begin{center}
\def\svgwidth{\textwidth}
\begingroup%
  \makeatletter%
  \providecommand\color[2][]{%
    \errmessage{(Inkscape) Color is used for the text in Inkscape, but the package 'color.sty' is not loaded}%
    \renewcommand\color[2][]{}%
  }%
  \providecommand\transparent[1]{%
    \errmessage{(Inkscape) Transparency is used (non-zero) for the text in Inkscape, but the package 'transparent.sty' is not loaded}%
    \renewcommand\transparent[1]{}%
  }%
  \providecommand\rotatebox[2]{#2}%
  \newcommand*\fsize{\dimexpr\f@size pt\relax}%
  \newcommand*\lineheight[1]{\fontsize{\fsize}{#1\fsize}\selectfont}%
  \ifx\svgwidth\undefined%
    \setlength{\unitlength}{331.5811116bp}%
    \ifx\svgscale\undefined%
      \relax%
    \else%
      \setlength{\unitlength}{\unitlength * \real{\svgscale}}%
    \fi%
  \else%
    \setlength{\unitlength}{\svgwidth}%
  \fi%
  \global\let\svgwidth\undefined%
  \global\let\svgscale\undefined%
  \makeatother%
  \begin{picture}(1,0.17789678)%
    \lineheight{1}%
    \setlength\tabcolsep{0pt}%
    \put(0,0){\includegraphics[width=\unitlength,page=1]{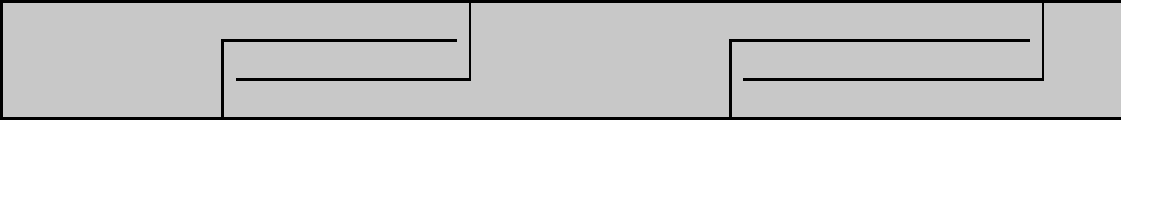}}%
    \put(0.18269152,0.00584763){\color[rgb]{0,0,0}\makebox(0,0)[lt]{\lineheight{1.25}\smash{\begin{tabular}[t]{l}$r_0$\end{tabular}}}}%
    \put(0,0){\includegraphics[width=\unitlength,page=2]{tracts.pdf}}%
    \put(0.39810901,0.00584763){\color[rgb]{0,0,0}\makebox(0,0)[lt]{\lineheight{1.25}\smash{\begin{tabular}[t]{l}$R_0$\end{tabular}}}}%
    \put(0.62429797,0.00584763){\color[rgb]{0,0,0}\makebox(0,0)[lt]{\lineheight{1.25}\smash{\begin{tabular}[t]{l}$r_1$\end{tabular}}}}%
    \put(0.89477499,0.00584763){\color[rgb]{0,0,0}\makebox(0,0)[lt]{\lineheight{1.25}\smash{\begin{tabular}[t]{l}$R_1$\end{tabular}}}}%
    \put(0.01331874,0.11677662){\color[rgb]{0,0,0}\makebox(0,0)[lt]{\lineheight{1.25}\smash{\begin{tabular}[t]{l}$T$\end{tabular}}}}%
  \end{picture}%
\endgroup%

\end{center}
\caption{The tracts $T$ used in our constrution, as defined in~\eqref{eqn:tracts}.\label{fig:tracts}}
\end{figure}
      
 More precisely, each of the functions involved in the construction is determined by specifying a tract $T\in\mathcal{T}$, which in turn
  is determined by sequences $(r_j)_{0\leq j < N}$ and $(R_j)_{0\leq j < N}$ of positive real numbers,
  where $N\leq \infty$, 
   $r_0 > 6$, $r_j > R_{j-1}+1 $ and $R_{j} > r_j + 2$ for all $j<N$. 
  The corresponding tract is then defined as follows (see Figure~\ref{fig:tracts}):
   \begin{align}\label{eqn:tracts} T \defeq \{x + iy &\colon 4 < x , \lvert y\rvert < \pi\}\, \setminus \\ \notag
        \bigcup_{0\leq j < N} \bigl( &\{r_j + iy \colon -\pi < y \leq \pi/3 \} \cup
                                         \{ x + \pi i /3 \colon r_j < x \leq R_j - 1 \}\, \cup \\ \notag
                                          &\{ R_j + iy \colon -\pi/3 \leq y < \pi \} \cup
                                          \{ x - \pi i /3 \colon r_j + 1 \leq x < R_j \} \bigr). 
    \end{align}
    
  Every tract $T$ is an element of $\mathcal{T}$ as defined in Definition~\ref{defn:classH}. We denote the
   subclass of $\mathcal{H}$ defined by the tracts as above by $\mathcal{F}$. If $F\in\mathcal{F}$, then 
   we write $T(F)$ for the tract of $F$, as before. We also write $r_j(F)$ and $R_j(F)$ for the corresponding
   sequences as above. 
      For inductive purposes, we use the convention that $R_{-1}(F) = 5$.

   We also write $N(F) = N \leq \infty$ for the number of ``wiggles'' occurring in the tract of $F$,
   and denote by $\mathcal{F}_N$ the subclass consisting of maps $F\in\mathcal{F}$ with $F(N)=N < \infty$.
   In the following, when $F,\tilde{F}, F_j$ etc.\ are members of $\mathcal{F}$, we always take $\phi$, $\tilde{\phi}$, $\phi_j$ etc. to denote their one-dimensional projections.

\begin{prop}[Bounded decorations in $\F$]\label{prop:bounded_decorations}
 There exists a universal constant $\nu_0>0$ such that $\mathcal{F}\subset\mathcal{H}_{\nu_0}$. 
\end{prop}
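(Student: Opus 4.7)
My plan is to exploit the uniformly bounded transverse geometry of tracts in $\F$. Let $F\in\F$ with tract $T$, and write $\gamma_R\defeq F^{-1}(\{w\in\HH\colon \lvert w\rvert = R\})$ for the vertical geodesic at parameter $R$. Since $T\subset S$ has imaginary height $2\pi$, only the real-part extent of $\gamma_R$ needs to be bounded. From the explicit formula~\eqref{eqn:tracts}, every tract decomposes into three types of pieces: straight rectangles of height $2\pi$ between consecutive wiggles; three rectangular channels of height at most $2\pi/3$ inside each wiggle; and ``corner neighborhoods'' around each $r_j$ and $R_j$ where the channels merge. Crucially, the corner neighborhoods are translates of just two fixed bounded Euclidean shapes, and the heights of channels and straight parts are universally bounded; only the longitudinal lengths of the rectangular pieces depend on $F$.

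I would then argue that the spine $\Gamma\defeq F^{-1}((0,\infty))$ flows longitudinally through each piece of the decomposition, and $\gamma_R$ meets $\Gamma$ orthogonally at $z_R\defeq F^{-1}(R)$. In a rectangular piece, the hyperbolic metric of $T$ is uniformly comparable to that of a bi-infinite strip of the same height (by monotonicity of the hyperbolic metric as in~\eqref{eqn:stripestimate} together with standard interior estimates far from the short ends), so $\gamma_R$ restricted there is approximately a vertical segment whose Euclidean diameter is at most the height. In a corner piece, the fact that the corner is a translate of one of two fixed bounded shapes gives a universal Euclidean bound on the portion of $\gamma_R$ inside the corner, either by direct geometric inspection or by a compactness argument on the finite family of corner configurations.

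The main obstacle is to verify that $\gamma_R$ accumulates only bounded additional Euclidean diameter when passing between adjacent pieces of the decomposition, since a priori a hyperbolic geodesic could wind through several pieces. I would handle this using the quasi-isometry $\rho_T \asymp 1/\dist(\cdot,\partial T)$ valid inside each bounded piece: the Euclidean distance from points of $\gamma_R$ to its two boundary endpoints decays exponentially with hyperbolic length along $\gamma_R$, so once $\gamma_R$ exits the ``home piece'' containing $z_R$, it hugs the boundary of $T$ and contributes only a bounded extra Euclidean diameter. Combining the rectangular and corner bounds then produces the universal constant $\nu_0$ as required.
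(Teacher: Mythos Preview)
Your strategy is quite different from the paper's, and as written it has real gaps.

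The paper does not decompose the tract into pieces at all. Instead it observes a single geometric fact: from every point $z\in T$ there is a curve of uniformly bounded Euclidean diameter to each of the two ``sides'' of $\partial T$ (those mapping under $F$ to $i[0,\infty)$ and to $i(-\infty,0]$). For most $z$ this curve is just the vertical segment $\alpha_z$ of length $\le 2\pi$; near the left end of $T$ one patches $\alpha_z$ with a fixed bounded geodesic from $5$, whose existence and bounded diameter come from \cite[Corollary~4.18]{pommerenke}. The paper then invokes \cite[Lemma~4.21]{pommerenke}, which says that the diameter of the hyperbolic geodesic $F^{-1}(\{\lvert w\rvert=R\})$ is controlled by the diameter of \emph{any} curve in $T$ with the same separation property. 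This converts ``short crosscuts exist everywhere'' directly into ``vertical geodesics are short'', with no local analysis of the geodesic needed.

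In your argument, two steps are not justified. First, the claimed comparability $\rho_T\asymp\rho_{S_h}$ in a channel of height $h$: monotonicity from~\eqref{eqn:stripestimate} only yields $\rho_T\ge \rho_{\tilde S}$ for the ambient height-$2\pi$ strip, which near the slit side of a channel is much smaller than $\rho_{S_h}$; you would need instead $\rho_T\gtrsim 1/\dist(\cdot,\partial T)$, which holds but is a different (Koebe) estimate than what you cite, and even then it says nothing about the \emph{shape} of $\gamma_R$. Second, the ``compactness on the finite family of corner configurations'' does not work as stated: the portion of $\gamma_R$ inside a corner depends on the entire domain $T$, not just on the local corner shape, so you cannot simply freeze the corner and vary over a compact family. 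Your proposed fix---that $\gamma_R$ hugs $\partial T$ once it leaves the home piece---is essentially the content of the Gehring--Hayman theorem (equivalently, Pommerenke's Lemma~4.21), and once you invoke that, the piecewise decomposition becomes unnecessary: the short vertical segment $\alpha_{z_R}$ already gives the bound directly, which is exactly the paper's argument.
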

\begin{proof}
  Let $F\in\mathcal{F}$ and $T = T(F)$. The 
    set $\partial T\setminus\{4\}$ has two connected component, one containing
    $[4,\infty) + \pi i$ and one containing $[4,\infty) - \pi i$. We call these the \emph{upper} and \emph{lower}
     boundaries of $T$, respectively. 
     By the definition of $\mathcal{F}$, every $z\in T$ can be connected both to the upper and to the
     lower boundary by a straight vertical segment $\alpha_z\subset T$ of length at most $2\pi$. 

  \begin{claim}
    There is a universal constant $C$ with the following property. If $F\in\mathcal{F}$ and $z\in T=T(F)$, there are 
      half-open arcs
     $\beta^+_z, \beta^-_z\subset T$ such that 
       \begin{enumerate}[(a)]
          \item Each $\beta^+$ and $\beta^-$ connects $z$ to an endpoint on $\partial T$, 
          \item $\beta^+$ and $\beta^-$ have Euclidean diameter at most $C$;
          \item $F(\beta_z^{+})$ connects $F(z)$ to an endpoint on $i[0,\infty)$, and 
                          $F(\beta^-_z)$ connects $F(z)$ to a point of $i(-\infty,0]$. 
      \end{enumerate}
  \end{claim}
  \begin{subproof}
     The harmonic measure in $\HH$ of $i [0,\infty)$, seen from $5$, is $1/2$, as is that of
     $i (-\infty,0]$. 
        (See e.g.\ \cite{garnettmarshall}
        for the definition of harmonic measure.) 
         Furthermore, $\dist(5,\partial T) = 1$ by construction. 
        It follows from~\cite[Corollary~4.18]{pommerenke} that
       there is a universal constant $K_0$ with the following property: there exist a hyperbolic geodesics $\gamma^+$ 
         and $\gamma^-$
       of $T$ that have diameter at most $K_0$, and such that $F(\gamma^{\pm})$ connects $5$ to $\pm i[0,\infty)$. 
       We set $C \defeq K_0 + 2\pi + 1$.

     Let $\theta\in\R$ be the unique number with $F^{-1}(i\theta) = 4$. To fix our ideas, suppose that
       $\theta\leq 0$; the case $\theta\geq 0$ is analogous. Let $z\in T$. The curve $F(\alpha_z)$ has endpoints
         $i\theta^+$ and $i\theta^-$ with $\theta^- < \theta < \theta^+$. In particular,
         we can take $\beta^-$ to be the piece of $\alpha_z$ connecting $z$ to the lower boundary. 
          If $\theta^+ \geq 0$, we can do the same for the upper boundary, and are done. 
         
     Otherwise, the segment $\alpha_z$ intersects $\gamma \cup (4,5]$. We can thus connect $z$ to a point $z'$ of 
       $\gamma \cup (4,5]$, and connect $z'$ to the endpoint of $\gamma$ on $\partial T$ by a subcurve
       of $\gamma\cup (4,5]$. Overall, this curve has diameter at most $2\pi + K_0 + 1=C$, as desired.
  \end{subproof}

  To complete the proof, we proceed as in      \cite[Proposition~7.4]{arclike}. By \cite[Lemma~4.21]{pommerenke},
    there is a universal constant $K_1$ with the following property. Let $x\in\R$, let 
    $\beta$ be the geodesic of $\HH$ connecting $x$ to $ix$, and let $\tilde{\beta}\subset\HH$ be any curve
    connecting $ix$ to $[0,\infty)$. Then 
      \[ \diam(F^{-1}(\beta))\leq K_1\cdot \diam(F^{-1}(\tilde{\beta})). \] 
      
    The diameter $\diam(\tilde{\beta})$ can be chosen less than $C + \eps$, for any $\eps>0$. Indeed,
      we may 
      choose $\delta$ small enough that $F^{-1}( [0,\delta] + ix)$ has diameter less than $\eps$, and then 
      connect $z\defeq F^{-1}(\delta + ix)$ to $F^{-1}([0,\infty))$ using the curve $\beta^+_z$ if $x<0$, and 
        using $\beta^-_z$ if $x>0$. 
    
       So $\diam(F^{-1}(\beta)) \leq K_1 \cdot C$, and the claim follows if we take
        $\nu\defeq 2\cdot K_1\cdot C$.
\end{proof} 

\begin{figure}
\begin{center}
\def\svgwidth{\textwidth}
\begingroup%
  \makeatletter%
  \providecommand\color[2][]{%
    \errmessage{(Inkscape) Color is used for the text in Inkscape, but the package 'color.sty' is not loaded}%
    \renewcommand\color[2][]{}%
  }%
  \providecommand\transparent[1]{%
    \errmessage{(Inkscape) Transparency is used (non-zero) for the text in Inkscape, but the package 'transparent.sty' is not loaded}%
    \renewcommand\transparent[1]{}%
  }%
  \providecommand\rotatebox[2]{#2}%
  \newcommand*\fsize{\dimexpr\f@size pt\relax}%
  \newcommand*\lineheight[1]{\fontsize{\fsize}{#1\fsize}\selectfont}%
  \ifx\svgwidth\undefined%
    \setlength{\unitlength}{322.87501037bp}%
    \ifx\svgscale\undefined%
      \relax%
    \else%
      \setlength{\unitlength}{\unitlength * \real{\svgscale}}%
    \fi%
  \else%
    \setlength{\unitlength}{\svgwidth}%
  \fi%
  \global\let\svgwidth\undefined%
  \global\let\svgscale\undefined%
  \makeatother%
  \begin{picture}(1,0.10685249)%
    \lineheight{1}%
    \setlength\tabcolsep{0pt}%
    \put(0,0){\includegraphics[width=\unitlength,page=1]{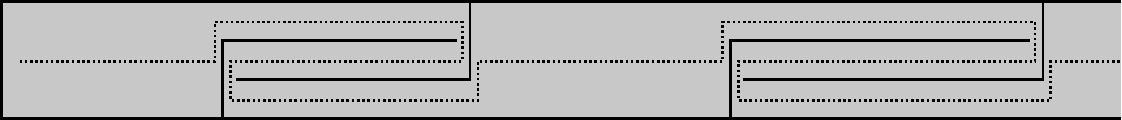}}%
    \put(0.02439024,0.05923346){\color[rgb]{0,0,0}\makebox(0,0)[lt]{\lineheight{1.25}\smash{\begin{tabular}[t]{l}$\alpha$\end{tabular}}}}%
  \end{picture}%
\endgroup%

\end{center}
\caption{The curve $\alpha$ stays distance $1/2$ away from $\partial T$.\label{fig:alpha}}
\end{figure}

\begin{prop}[Growth of functions in $\F$]\label{prop:Fgrowth}
 There is a constant $C>1$ with the following property. Let $F\in\mathcal{F}$, and let $z\in T(F)$ with $\lvert F(z)\rvert \geq 4$. 
   If $z$ does not belong to the two bottom thirds of any ``wiggle'', i.e., if 
      \[ z\notin W_j(F) \defeq \{\zeta \in T(F)\colon r_j < \re \zeta < R_j \text{ and } \im \zeta < \pi/3 \} \]
      for all $j<N(F)$, 
      then
        \[ \frac{\re z}{C} \leq \log \lvert F(z)\rvert \leq C\re z. \]
      If $z\in W_j(F)$, then 
         \[ \frac{R_j}{C} \leq \log \lvert F(z)\rvert \leq C\cdot R_j. \] 
\end{prop}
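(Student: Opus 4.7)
The plan rests on $F\colon T(F)\to\HH$ being a conformal isomorphism with $F(5)=5$, hence a hyperbolic isometry: $\dist_{T(F)}(5,z) = \dist_\HH(5,F(z))$. First I would compute hyperbolic distances in $\HH$: from $\cosh(\dist_\HH(5,w)) = (|w|^2+25)/(10\re w)$ one reads off
\[
  \log|w| \leq \dist_\HH(5,w) \leq 2\log|w| + O(\log(|w|/\re w))
\]
for $w\in\HH$ with $|w|\geq 4$. In particular $\log|F(z)| \leq \dist_{T(F)}(5,z) + O(1)$ unconditionally, so the upper bounds on $\log|F(z)|$ reduce to upper bounds on $\dist_{T(F)}(5,z)$, while the lower bounds require a separate ingredient.

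For the upper bound, I would connect $5$ to $z$ by an explicit curve $\alpha\subset T(F)$ staying Euclidean distance at least $1/2$ from $\partial T(F)$, as suggested in Figure~\ref{fig:alpha}. The standard estimate $\rho_{T(F)}(\zeta)\leq 2/\dist(\zeta,\partial T(F))\leq 4$ bounds the hyperbolic density along $\alpha$, so the hyperbolic length of $\alpha$ is at most four times its Euclidean length. The Euclidean length is then controlled by tracking how $\alpha$ zigzags through the wiggles: each wiggle $k$ that $\alpha$ traverses contributes at most $3(R_k-r_k)$ (from the three crossings through the regions $A$, $B$, $B'$), and the straight segments between wiggles telescope to at most $\re z$ (respectively $R_j$ if $z\in W_j$). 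Using $\sum_k(R_k-r_k)\leq\re z$ (resp.\ $R_j$), the total Euclidean length of $\alpha$ is at most $3\re z$ (resp.\ $6R_j$), hence $\dist_{T(F)}(5,z)\leq C'\re z$ (resp.\ $C'R_j$), yielding the desired upper bound on $\log|F(z)|$.

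The lower bound when $z\notin W_j$ for all $j$ follows by combining bounded decorations (Proposition~\ref{prop:bounded_decorations}) with Lemma~\ref{lem:phiproperties}~\ref{item:phiestimate}: bounded decorations give $|\re z-\phi(|F(z)|)|\leq\nu_0$, and $\phi(t) < 5+2\log(t/5)$ then yields $\log|F(z)| \geq (\re z-\nu_0-5)/2 + \log 5 \geq \re z/C$. For $z\in W_j$ the argument becomes topological. The key point is that any connected subset of $T(F)$ joining $5$ to $z$ must cross the \emph{neck} of wiggle~$j$, namely the gap at $R_j-1<\re<R_j$, $\im\approx\pi/3$ where slit~2 terminates; in particular the curve $F^{-1}((0,\infty))$ crosses the neck at a time $t_2$ with $\phi(t_2)\geq R_j-1$, so $t_2\geq 5\e^{(R_j-6)/2}$ by Lemma~\ref{lem:phiproperties}~\ref{item:phiestimate}. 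The Jordan arc $F^{-1}(\{|w|=t_2\})\subset T(F)$ then separates the pre-neck region $\{z\colon|F(z)|<t_2\}$ (containing $5$) from the post-neck region $\{z\colon|F(z)|>t_2\}$; since $W_j$ is connected (a direct check using the gap between slits~1 and~4 that lets regions $B$ and $B'$ meet) and $F^{-1}((0,\infty))$ enters $W_j$ only past the neck, all of $W_j$ must lie in the post-neck component, giving $|F(z)|\geq t_2$ and hence $\log|F(z)|\geq R_j/C$.

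The hardest step will be to make this topological separation rigorous: verifying that the Jordan arc $F^{-1}(\{|w|=t_2\})$ really separates $5$ from \emph{every} point of $W_j$. This relies both on the connectedness of $W_j$ and on the explicit wiggle geometry guaranteeing that the neck is the only passage into $W_j$ from outside.
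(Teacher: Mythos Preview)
Your plan is essentially the paper's: both sides use that $F$ is a hyperbolic isometry, bound $\dist_T(5,z)$ from above via the explicit curve $\alpha$ of Figure~\ref{fig:alpha}, and invoke bounded decorations (Proposition~\ref{prop:bounded_decorations}) to pass between $z$ and the point $\tilde z \defeq F^{-1}(\lvert F(z)\rvert)$ on the same vertical geodesic. Your treatment of the upper bound and of the lower bound for $z\notin W_j$ matches the paper (your use of Lemma~\ref{lem:phiproperties}\,\ref{item:phiestimate} is just a repackaging of the same hyperbolic estimate).

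The only substantive difference is the lower bound for $z\in W_j$, and here the paper's route is both simpler and avoids the difficulty you flag. Rather than introducing a neck-crossing time $t_2$ and arguing that the vertical geodesic $F^{-1}(\{\lvert w\rvert=t_2\})$ separates all of $W_j$ from $5$, the paper argues directly on path lengths: any curve in $T$ from $5$ to $z\in W_j$ must pass through the neck at real part $\geq R_j-1$, so its Euclidean length is at least $R_j-6$. Since $\lvert z-\tilde z\rvert\leq\nu_0$, the same bound (minus $\nu_0$) holds for curves from $5$ to $\tilde z$, whence
\[
   \log\frac{\lvert F(z)\rvert}{5}=\dist_{\HH}(5,\lvert F(z)\rvert)=\dist_T(5,\tilde z)\geq \tfrac{1}{2}(R_j-6-\nu_0),
\]
using $\rho_T\geq 1/2$. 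This uses exactly the same topological input (the neck is the only access from the region containing $5$ into $W_j$) but packages it as a length estimate rather than a separation statement, so your ``hardest step'' simply does not arise. Your separation argument can be made to work, but note that as stated it has a gap: knowing $F^{-1}((0,t_2))\cap W_j=\emptyset$ does not by itself give $F^{-1}(\{\lvert w\rvert<t_2\})\cap W_j=\emptyset$, since the vertical geodesics near the neck may dip into $W_j$. You would need to back off from $t_2$ by an amount depending on $\nu_0$ and argue more carefully; the paper's length argument sidesteps this entirely.
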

\begin{proof}
   Note that there is an arc $\alpha$ that connects $5$ to $\infty$ in $T$
     and remains at distance at most $1/2$ from $\partial T$, namely 
     \begin{align*} \alpha \defeq \left[5, r_0 - \frac{1}{2}\right] \cup 
           \bigcup_{j\geq 0} &\left(r_j - \frac{1}{2} + i[0,2\pi/3] \right)
                \cup \left(\left [r_j - \frac{1}{2} , R_j - \frac{1}{2}\right] + 2\pi i /3 \right) \\ &\cup
                   \left( R_j - \frac{1}{2} + i[0,2\pi/3] \right) \cup
                   \left[r_j + \frac{1}{2}, R_j - \frac{1}{2}\right] \\ &\cup
                   \left( r_j + \frac{1}{2} + i [-2\pi/3,0]  \right)  \cup
                   \left( \left[ r_j + \frac{1}{2} , R_j + \frac{1}{2} \right] - 2\pi i/3\right) \\ &\cup
                   \left(R_j + \frac{1}{2} + i[0,-2\pi/3]\right) \cup \left[R_j + \frac{1}{2} , r_{j+1} - \frac{1}{2} \right]. \end{align*} 
    (See Figure~\ref{fig:alpha}.) Let $z\in \alpha$, and let $\alpha_z$ be the part of $\alpha$ that connects
    $5$ to $z$. Let us estimate the Euclidean length $\ell(\alpha_z)$, first when $z\notin W_j(F)$ for any $j$. Let 
    $j$ be maximal such that $\re z > R_j$. Recall that $R_j > 7 + 3j$. We have 
      \begin{align*} \ell(\alpha_z) &\leq \re z - 5 + \pi + \sum_{i\leq j} \left(2\pi + R_j - r_j - 1\right) \\ &\leq
                                     3\re z + (2j+3)\pi < 3 \re z + R_j \pi < (3 + \pi)\cdot \re z  \end{align*}
    If $z\in W_j(F)$, then similarly 
      \[ \ell(\alpha_z) \leq (3 + \pi) \cdot R_j. \]
    On the other hand, clearly any curve connecting $5$ to a point $z\in T$ must have Euclidean length at least
       $\re z -5$, and at least $R_j$ if $z\in W_j$. 

   Now let $z$ be as in the statement of the proposition, and let 
     \[ \gamma \defeq F^{-1}(\{\zeta\in \HH\colon \lvert \zeta \rvert = \lvert F(z)\rvert \}) \]
     be the vertical geodesic containing $z$, and set $\tilde{z}\defeq F^{-1}(\lvert F(z)\rvert)\in \gamma$. By Proposition~\ref{prop:bounded_decorations}, 
       the Euclidean diameter of $\gamma$ is bounded from above by $\nu_0$. 
       Define 
          \[ R \defeq \begin{cases}
                                                        \re z & \text{if $z\notin W_j$ for all $j$}; \\
                                                        R_j & \text{if $z\in W_j$}.\end{cases} \]
       By the above, and the standard estimates on the hyperbolic metric on $T$, we have 
       \[ R - \nu_0 - 5 \leq \dist_T(5,\gamma) \leq 
           (3+\pi) \cdot (R + \nu_0) \leq (3+\pi + \nu_0/4) \cdot R. \] 
      Moreover, 
        \[ \dist_T(5,\gamma) = \dist_T(5,\tilde{z} = \dist_{\HH}(5,\lvert F(z)\rvert) = \log \frac{\lvert F(z)\rvert}{5}. \]
     When $R> 2(\nu_0 + 5)$, say, the claim follows; we can ensure that it holds for smaller $R$ also by choosing
      $C$ sufficiently large. (Recall that $R, \lvert F(z)\rvert \geq 4$.) 
\end{proof}

\begin{prop}[Number of intervals increases only through wiggles]\label{prop:Un1}
  Let $F\in\mathcal{F}$. Let $I=[A,D]$ be an interval with $A\geq 6$ and $\lvert I\rvert \geq \nu_0$, where $\nu_0$ is the constant from Proposition~\ref{prop:bounded_decorations}. Suppose 
   that furthermore 
    \[ I \not\subset [r_k(F) - \nu_0 , R_k(F) + \nu_0 ] \]
   for any $k\geq 0$. Then 
     \[ \# \U_1(I,\phi) = 1. \]
\end{prop}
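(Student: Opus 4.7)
The plan is to exploit the piecewise-monotone structure of $\phi$ dictated by the geometry of $T=T(F)$. As $t$ traverses $[4,\infty]$, the hyperbolic geodesic $F^{-1}(t)$ must pass through the meandering channel of every wiggle of the tract defined in~\eqref{eqn:tracts}: its real part rises from approximately $r_j$ to approximately $R_j$ along the top channel, falls back to approximately $r_j$ through the middle channel, and rises again past $R_j$ along the bottom channel. Thus $\phi$ alternates between \emph{up-slopes} (with ranges contained in $[r_{j-1}-\nu_0, R_j+\nu_0]$, using the convention $r_{-1}\defeq 4$) and \emph{down-slopes} (with ranges contained in $[r_j-\nu_0, R_j+\nu_0]$); the $\nu_0$-slack at local extrema follows from Proposition~\ref{prop:bounded_decorations}. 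In particular, a value $v\geq 6$ has exactly one preimage under $\phi$ if $v$ lies outside every wiggle window $[r_k-\nu_0, R_k+\nu_0]$, and exactly three preimages if $v$ lies inside one such window (one preimage each on up-slope $k$, down-slope $k$, and up-slope $k+1$).

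Next I would use the hypothesis to control the arrangement of $\phi^{-1}(A)\cup\phi^{-1}(D)$. Suppose $A$ lies in some wiggle window $[r_k-\nu_0, R_k+\nu_0]$, so that $A$ has three preimages clustered around wiggle $k$. The hypothesis $I\not\subset [r_k-\nu_0, R_k+\nu_0]$, combined with $D\geq A$, forces $D>R_k+\nu_0$, and therefore every preimage of $D$ lies strictly to the right of wiggle $k$, hence of all preimages of $A$. The symmetric argument handles the case where $D$ lies in a wiggle window. In the remaining case where both $A$ and $D$ lie outside every wiggle window, each has a single preimage, and these are ordered by $A<D$. In every situation, when the elements of $\phi^{-1}(A)\cup\phi^{-1}(D)$ are listed in increasing order, all preimages of one endpoint-value precede all preimages of the other.

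Now let $J=[\alpha,\beta]\in\mathcal{U}_1(I,\phi)$. By Lemma~\ref{lem:Unproperties}\ref{item:mappingto} and~\ref{item:Unendpoints}, $\phi(J)=I$, $\{\phi(\alpha),\phi(\beta)\}=\{A,D\}$, and no interior point of $J$ is mapped to $\{A,D\}$. Combined with the arrangement above, the only possibility is that $\alpha$ is the largest preimage of the earlier value and $\beta$ the smallest preimage of the later value; thus $J$ is uniquely determined. Existence of some $J\in\mathcal{U}_1(I,\phi)$ follows from Lemma~\ref{lem:Unproperties}\ref{item:minimalsubset} applied to $[4,\infty]$, since $\phi([4,\infty])\supset[6,\infty)\supset I$.

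The main obstacle is the careful handling of the $\nu_0$-slack and the verification that no spurious minimal interval arises from two preimages of the same endpoint-value (say, two preimages of $A$). Such an interval would force $\phi$ to stay in $[A,D]$ between these preimages while oscillating through at least one intermediate local extremum; the piecewise-monotone structure then requires the corresponding wiggle $k$ to satisfy both $R_k\leq D+\nu_0$ and $r_k\geq A-\nu_0$, i.e., $I\subset [r_k-\nu_0, R_k+\nu_0]$, contradicting the hypothesis. The condition $|I|\geq \nu_0$ and the separation $r_{j+1}>R_j+1$ in~\eqref{eqn:tracts} are what ensure these strict inequalities can be meaningfully compared.
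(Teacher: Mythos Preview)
Your approach differs from the paper's, and the step on which everything rests—that $\phi$ is piecewise monotone with local extrema lying within $\nu_0$ of the wiggle endpoints $r_j,R_j$—is not justified by Proposition~\ref{prop:bounded_decorations}. That proposition bounds the Euclidean diameter of the ``vertical'' geodesics $F^{-1}(\{\lvert w\rvert=R\})$; it says nothing directly about where the real part of the hyperbolic geodesic $t\mapsto F^{-1}(t)$ has its critical points, nor even that there are only finitely many. The curve $F^{-1}((0,\infty))$ must indeed thread the wiggle channels, but proving that its real part is monotone along each straight stretch and turns within $\nu_0$ of each channel end is a genuine geometric statement requiring its own argument. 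Your preimage-counting and ordering analysis (one versus three preimages, all $A$-preimages preceding all $D$-preimages, etc.) collapses without it.

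The paper sidesteps this entirely. From the hypothesis it extracts a value $\tau\in I$ lying in a gap between consecutive wiggles with margin $\nu_0$ on each side, so that $T\cap\{\tau-\nu_0<\re z<\tau+\nu_0\}$ is a plain rectangle. A suitably chosen vertical geodesic $\gamma$ touches real part $\tau$; the diameter bound confines $\gamma$ to this rectangle, so $\gamma$ is a cross-cut separating $T$ into a left piece and a right piece. Any $J\in\U_1(I,\phi)$ has $\phi(J)=I\ni\tau$, so the arc $F^{-1}(J)$ must cross $\gamma$, forcing the parameter $r$ of $\gamma$ to lie in $J$. Distinct elements of $\U_1$ have disjoint interiors by Lemma~\ref{lem:Unproperties}\ref{item:Undisjoint}, so there is at most one. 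This uses Proposition~\ref{prop:bounded_decorations} in precisely the form it is stated, and is much shorter. If you attempted to make your monotonicity claim rigorous, you would most likely end up proving exactly this separating property of vertical geodesics in gap regions—at which point the paper's direct argument is the more economical route.
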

\begin{proof}
  The hypothesis means that there is $\tau\in[A,D]$ such that 
     \[ R_k + \nu_0 < \tau < r_{k+1} - \nu_0 \]
     for some $k$. 

  In particular, $T$ intersects the vertical lines at real parts 
    $\tau$ and $\tau + \nu_0$ each in exactly one vertical line segment. 
   Let $r$ be maximal such that the vertical geodesic 
      \[ \gamma = F^{-1}(\{z\in\HH\colon \lvert z\rvert = r \}). \]
     contains a point at real part $\tau$. Then, by Proposition~\ref{prop:bounded_decorations}, 
       $\gamma$ separates all points of $T$ at real part
       less than $\tau$ from all points at real part greater than $\tau + \nu_0$. 
        
   If $J\in\U_1(I,\phi)$, then $F^{-1}(J)$ connects real parts $\tau$ and $\tau+\nu_0$, and must hence intersect 
      $\gamma$, which means
      that $r \in J$.  By Lemma~\ref{lem:Unproperties}~\ref{item:Undisjoint}, two different
       intervals in $\U_1(I,\phi)$ cannot both contain the same point $r$. This proves the claim.
       \end{proof}

\section{Approximation properties of the construction}

As already mentioned, the positions $(r_n,R_n)$ of the ``wiggles'' in our example will be defined inductively. A key fact  is that, if $F\in\mathcal{F}_N$, and the next wiggle $(r_N,R_N)$ is chosen sufficiently far to the right, then the inverse of the 
 next function $\tilde{F}$ will be close to $F^{-1}$, at least up until the end of the $n$-th ``wiggle'' of $T(\tilde{F})$. 
 In particular, key properties of the one-dimensional projection $\phi$ of $F$ will be preserved under this approximation. 
 
To make this statement precise, let us say
  that $\tilde{F}$ is \emph{$(N,\rho)$-close} to a function $F\in\mathcal{F}_N$ if 
 \begin{enumerate}[(a)]
   \item $N(\tilde{F})>N$;
   \item $r_n(\tilde{F}) = r_n(F)$ and $R_n(\tilde{F}) = R_n(F)$ for $n < N$;\label{item:samewiggles}
   \item $r_N(\tilde{F})\geq \rho$. 
  \end{enumerate} 
  We begin by noting that $(N,\rho)$-close tracts, for large $\rho$, are also close in the
  sene of Carath\'eodory kernel convergence. (See~\cite[Section~1.4]{pommerenke} for a discussion of kernel convergence.)
  
\begin{lem}[Carath\'eodory convergence]\label{lem:caratheodory}
  Let $F\in \mathcal{F}_N$. Suppose that $(F_j)_{j=0}^{\infty}$ is a sequence of functions in $\mathcal{F}$, such that
    $F_j$ is $(N,\rho_j)$-close to $F$, with $\rho_j\to\infty$. Then $T(F_j)$ converges to $T(F)$, with respect to any point
    $z_0\in T$, in the sense of Carath\'eodory kernel convergence. 
\end{lem}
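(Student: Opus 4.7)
The plan is to verify the standard three conditions for Carath\'eodory kernel convergence with respect to $z_0$: (i) $z_0\in T(F_j)$ eventually; (ii) every compact subset of $T(F)$ lies in $T(F_j)$ for all sufficiently large $j$; and (iii) no strictly larger domain than $T(F)$ has property (ii). Since the argument will depend only on the parameters $\rho_j\to\infty$, the same reasoning will apply to every subsequence, which is what kernel convergence requires.

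The key observation, to be read off directly from the definition~\eqref{eqn:tracts}, is that if $F_j$ is $(N,\rho_j)$-close to $F$, then
\[ T(F_j)\cap\{\re z<\rho_j\} \;=\; T(F)\cap\{\re z<\rho_j\}. \]
Indeed, the slits defining $T(F)$ lie at real parts $r_n(F), R_n(F)$ with $n<N$, and by condition~\ref{item:samewiggles} the corresponding slits of $T(F_j)$ coincide with these; any further slits of $T(F_j)$ sit at real parts $\geq r_N(F_j)\geq\rho_j$. From this equality, (i) and (ii) are immediate: given compact $K\subset T(F)$, choose $M$ with $\re z<M$ for every $z\in K$; for $\rho_j>M$ the displayed equality yields $K\subset T(F_j)$. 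Specialising to $K=\{z_0\}$ gives (i).

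For (iii), suppose $\Omega$ is a domain containing $z_0$ such that every compact $K\subset \Omega$ is in $T(F_j)$ for large $j$, and assume for contradiction that $\Omega\not\subset T(F)$. Since $\Omega$ is connected and meets $T(F)$, it contains some boundary point $p\in\partial T(F)$. The set $\partial T(F)$ consists of the boundary of the strip $\tilde S$ together with the slits of the first $N$ wiggles, all of which are contained in $\mathbb{C}\setminus T(F_j)$ for \emph{every} $j$ (the strip boundary is respected by every tract in $\F$, and the first $N$ slits are shared). Hence any closed disc $\overline{D}(p,\eps)\subset\Omega$ fails to lie in $T(F_j)$ for all $j$, contradicting the assumption on $\Omega$. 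Thus $\Omega\subset T(F)$, proving (iii).

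There is no real obstacle here: once the slit structure of~\eqref{eqn:tracts} and the meaning of $(N,\rho)$-closeness are unpacked, the tracts literally agree on the half-strip $\{\re z<\rho_j\}$, which forces kernel convergence. The only minor care needed is in (iii), where one must note that the strip boundary and the shared slits obstruct any enlargement of the kernel beyond $T(F)$.
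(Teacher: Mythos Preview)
Your proof is correct and uses essentially the same approach as the paper: both rest on the key observation that $T(F_j)$ and $T(F)$ agree on $\{\re z<\rho_j\}$ (the paper phrases this as $T(F_j)\subset T(F)$ together with $T(F)\cap\{\re z<\rho_j\}\subset T(F_j)$). The only cosmetic difference is that the paper invokes an equivalent compact-set characterisation of kernel convergence (for compact connected $K$, $K\subset T(F)\Leftrightarrow K\subset T(F_j)$ eventually) rather than verifying the three standard conditions directly as you do.
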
  
\begin{proof}
  Let $K$ be a compact, connected set. We must show that the following are equivalent: 
   \begin{enumerate}[(i)]
     \item $K\subset T(F)$;\label{item:KinT}
     \item $K\subset T(F_j)$ for all but at most finitely many $j$. \label{KinTn}
   \end{enumerate} 
   By condition~\ref{item:samewiggles} in the definition of $(N,\rho)$-closeness, we have 
      $T(F_j)\subset T(F)$ and 
       \[ T(F) \cap \{ \re z < \rho_j\} \subset T(F_j) \] 
    for all $j$. So if 
      $j$ is sufficiently large that $\re z < \rho_j$ for all $z\in K$, then $K\subset T(F_j)$ if and only if $K\subset T(F)$. 
\end{proof}   

\begin{prop}[Convergence of inverse functions]\label{prop:caratheodory1}
 Let $F\in \mathcal{H}_{\nu}$, let $\nu>0$, and suppose that $(F_j)_{j=0}^{\infty}$ is a sequence of functions in $\mathcal{H}_{\nu}$ such that
   $T_j(F)$ converges $T(F)$ with respect to the point $5$, in the sense of Carath\'eodory kernel convergence. 
   
   Then $F_j^{-1} \to F^{-1}$ locally uniformly on $\HH$. 
\end{prop}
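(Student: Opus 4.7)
The plan is to extract subsequential limits of $F_j^{-1}$ via Montel's theorem and identify each limit with the unique conformal isomorphism $\HH \to T$ fixing $5$ and sending $\infty$ to $\infty$. First I would note that each $F_j^{-1}$ takes values in the strip $S$ and in particular omits $0$ and $-1$, so the family $\{F_j^{-1}\}$ is normal on $\HH$ by Montel's theorem. Since $F_j^{-1}(5) = 5$ for all $j$, any subsequential locally uniform limit $G$ satisfies $G(5) = 5$ and is therefore holomorphic. The kernel hypothesis provides $\delta > 0$ with $\overline{D}(5,\delta) \subset T_j$ for all sufficiently large $j$; monotonicity of the hyperbolic metric then yields a uniform upper bound on $\rho_{T_j}(5)$, and hence a uniform lower bound on $|(F_j^{-1})'(5)| = \rho_\HH(5)/\rho_{T_j}(5)$. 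Passing to the limit, $|G'(5)| > 0$, so $G$ is non-constant and, by Hurwitz, univalent.

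Next I would apply the classical Carath\'eodory kernel theorem to the normalised Riemann maps $\DD \to T_j$ with $0 \mapsto 5$ to obtain $\rho_{T_j}(5) \to \rho_T(5)$, and consequently $|(F_j^{-1})'(5)| \to |(F^{-1})'(5)|$, so that $|G'(5)| = |(F^{-1})'(5)|$. An open-mapping/Hurwitz argument would show that every point $z_0 \in G(\HH)$ has a disc lying in $T_j$ for all sufficiently large $j$ and hence, by definition of the kernel, in $T$; so $G(\HH) \subset T$. Therefore $F \circ G \colon \HH \to \HH$ is a univalent self-map fixing $5$ with $|(F \circ G)'(5)| = |F'(5)| \cdot |G'(5)| = 1$. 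By the Schwarz--Pick lemma, $F \circ G$ is then an automorphism of $\HH$ fixing $5$, which forces $G(\HH) = T$ and makes $G$ a conformal isomorphism from $\HH$ onto $T$.

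The main obstacle will be to identify this automorphism $F \circ G$ as the identity, since the automorphisms of $\HH$ fixing $5$ form a one-parameter circle group parametrised by their action on $\infty \in \partial \HH$. To finish, I need to show that the continuous extension of $G$, which exists by the Carath\'eodory--Torhorst theorem since $\partial T$ is locally connected, sends $\infty$ to $\infty$; then $F \circ G$ fixes both $5$ and $\infty$ and is therefore the identity, giving $G = F^{-1}$. Locally uniform convergence on compact subsets of $\HH$ provides no direct information at $\infty$, and the idea is to exploit that all $T_j$ lie in $\mathcal{H}_\nu$: the vertical geodesics $F_j^{-1}(\{|w|=R\})$ have Euclidean diameter at most $\nu$, so each $T_j$ has a uniformly thin end at infinity. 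Together with kernel convergence $T_j \to T$, this should force the prime ends at infinity to converge and yield $G^*(\infty) = \infty$, completing the identification $G = F^{-1}$.
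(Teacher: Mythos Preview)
Your approach is sound but more circuitous than the paper's. The paper applies the Carath\'eodory kernel theorem directly to normalised disc maps $\psi,\psi_j\colon\DD\to T,T_j$ (with $0\mapsto 5$ and positive derivative), obtaining $\psi_j\to\psi$ locally uniformly; since $F_j^{-1}=\psi_j\circ M_j$ with $M_j\colon\HH\to\DD$ the M\"obius map sending $5\mapsto 0$ and $\infty\mapsto\psi_j^{-1}(\infty)$, the problem reduces at once to proving $\psi_j^{-1}(\infty)\to\psi^{-1}(\infty)$. Your Montel/Hurwitz/Schwarz--Pick manoeuvre essentially re-derives the conclusion of the kernel theorem (that any subsequential limit is a conformal isomorphism onto $T$ fixing $5$), and since you invoke the kernel theorem anyway to match $|G'(5)|$, the detour buys nothing. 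Both routes arrive at the same genuine obstacle: the one-parameter ambiguity among conformal maps $\HH\to T$ fixing $5$, resolved only by the boundary condition at $\infty$, and this is precisely where the hypothesis $F_j\in\mathcal{H}_\nu$ is used.

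Your last paragraph correctly locates this, but ``this should force the prime ends at infinity to converge'' is not yet an argument. The paper makes it precise via harmonic measure: for $\delta>0$, choose $R$ so large that $\partial\tilde{S}\cap\{\re z\geq R\}$ has harmonic measure at most $\delta/(2\pi)$ from $5$ in the strip $\tilde{S}\supset T_j$; pick $\zeta\in T$ with $\re\zeta>R+\nu$ and $\psi^{-1}(\zeta)$ within $\delta/4$ of $\psi^{-1}(\infty)$. The vertical geodesic of $T_j$ through $\zeta$ has diameter $\leq\nu$, hence lies in $\{\re z>R\}$, so the arc of $\partial\DD$ it cuts off (which contains $\psi_j^{-1}(\infty)$) has length $\leq\delta/2$. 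Combined with $\psi_j^{-1}(\zeta)\to\psi^{-1}(\zeta)$ this gives $\psi_j^{-1}(\infty)\to\psi^{-1}(\infty)$. In your framework the analogous step is to show that the curves $G(\{|w|=R\}\cap\HH)$, which do inherit diameter $\leq\nu$ in the limit, actually drift to infinity as $R\to\infty$; ``uniformly thin ends'' is the right intuition, but it needs a quantitative estimate of this kind to become a proof.
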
 
\begin{proof}
 We write $T = T(F)$ and $T_j = F(T_j)$. 
    Let $\psi\colon \DD\to T$ and $\psi_j\colon \DD\to T_j$ be conformal isomorphisms, normalised such that
        $\psi(5)=\psi_j(5) = 5$ and $\psi'(5),\psi_j'(5)>0$. By Carath\'eodory's kernel convergence theorem,
        $\psi_j\to \psi$ locally uniformly on $\DD$. Recall that $\psi$ extends continuously to $\partial\DD$ by the Carath\'eodory-Torhorst theorem, 
        taking the value $\infty$ at exactly one point $\psi^{-1}(\infty)$ of $\partial\DD$. 
        We have 
         \[ F^{-1} = \psi \circ M, \]
         where $M\colon \DD\to\HH$ is the M\"obius transformation with $M(5)=0$ and $M(\infty)=\psi^{-1}(\infty)$. The analogous statements hold for
         $\psi_j$ and $F_j$. So it suffices to prove that $\psi_j^{-1}(\infty)\to\psi^{-1}(\infty)$. 
         
      Let $\delta>0$. Let $R$ be so large that the set of points
        at real part $\geq R $ in the boundary of $\tilde{S} = \{ a + ib\colon -\pi<b<\pi\}$ has harmonic measure at most $\delta/(2\pi)$, seen from $5$. Since
        $T_j\subset \tilde{S}$, the set of points at real part $\geq R$ in $\partial T_j$ also has harmonic measure at most $\delta/(2\pi)$ from $5$, independently of $j$. 
        
       Choose a point $\zeta\in T$ such that  $\re \zeta > R + \nu$ and 
         $\lvert \psi^{-1}(\zeta) - \psi^{-1}(\infty)\rvert < \delta/4$. For sufficiently large $j$, we have 
           \[ \lvert \psi_j^{-1}(\zeta) - \psi^{-1}(\zeta)\rvert < \frac{\delta}{4}. \] 
        Now let $\gamma_j$ be the geodesic of $\DD$ through $\psi_j^{-1}(\zeta)$ which is perpendicular to the radius connecting $0$ and $\psi_j^{-1}(\infty)$. Then
         $\psi_j(\gamma_j)$ is the vertical geodesic of $T_j$ through $\zeta$, and hence has diameter at most $\nu$. By choice
         of $\zeta$, 
           all real parts of $\psi_j(\gamma_j)$ are greater than $R$, and the harmonic measure of the arc of $\partial\DD$ separated from $0$ by $\gamma$ is
           at most $\delta/(2\pi)$. It follows that this arc has length at most $\delta/2$ and 
             $\lvert \psi_j^{-1}(\zeta) - \psi_j^{-1}(\infty)\rvert \leq \delta/2$. We conclude that
              \[ \lvert \psi^{-1}(\infty) - \psi_j^{-1}(\infty)\rvert \leq \lvert \psi^{-1}(\infty) - \psi^{-1}(\zeta)\rvert + 
                            \lvert \psi^{-1}(\zeta) - \psi_j^{-1}(\zeta)\rvert + \lvert \psi_j^{-1}(\zeta) - \psi_j^{-1}(\infty)\rvert < \delta. \]
     Since $\delta>0$ was arbitrary, the proof is complete.                                         
\end{proof} 
\begin{rmk}
  It is not sufficient to require that
     $F_n\in\mathcal{H}$ (rather than $F_n\in\mathcal{H}_{\nu}$). 
   Indeed, suppose that $T=S = \{  x + iy\colon x > 4\text{ and } \lvert y \rvert < \pi\}$ and 
      \[ T_n = \{ x+ iy\in S\colon x  < n + 6 \} \cup \{x + iy \in S\colon y > \pi - 1/n \} \cup
                    \{ x + (\pi - 1/n)i \colon 4<x<1/n\}. \]
    Then $T_n\to T$ in kernel, and in fact the boundaries of $T_N$ in $\Ch$ converge to that of $T$ in the Hausdorff metric.
     However, let $t_n>5$ be minimal with $\im F_n^{-1}(t_n) = \pi - 1/n$; then 
     $\dist_{T_n}(5,t_n)\to \infty$, and thus $t_n\to \infty$. However, by the 
     Gehring-Hayman theorem \cite[Theorem~4.20]{pommerenke}, $F_n^{-1}([5,t_n])$ remains uniformly bounded as $n\to\infty$.
     So $F_n^{-1}$ does not converge locally uniformly to $F^{-1}$ on $\HH$. 
\end{rmk} 
  
 Now we are able to prove the main approximation result of this section. Note that  Proposition~\ref{prop:caratheodory1} 
  shows that $\tilde{\phi}$ is close to $\phi$ whenever $\tilde{F}$ is $(N,\rho)$-close to $F$, with $\rho$ sufficiently large. 
  The following proposition sharpens this further by showing that $\tilde{\phi}^n(t)$ is close to $\phi^n(t)$, for all $n$,
  provided that these values belong to a bounded subinterval of $[4,\infty)$, and furthermore $t$ itself 
  is not too much larger than the position of $R_N(\tilde{F})$. Note that the latter means that $t$ is allowed to be
  large when $\rho$ is large.
  
\begin{prop}[Approximation by $(n,\rho)$-close functions]\label{prop:caratheodory}
  Suppose that $F\in \mathcal{F}_N$. Let $\eps>0$ and $\tau \geq 5$. Then there is a number $\rho > R_{N-1}(F)$ with the following property. 
  
  Suppose that $\tilde{F}\in \mathcal{F}$ is $(N,\rho)$-close to $F$. If $4\leq t \leq 2R_{N}(F)$ and
    $\min(\phi^n(t), \tilde{\phi}^n(t)) \leq \tau$ for some $n\geq 0$, then 
    \begin{equation}\label{eqn:phicaratheodory} \lvert \phi^n(t) - \tilde{\phi}^n(t)\rvert \leq \eps. \end{equation}
\end{prop}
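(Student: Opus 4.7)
The plan is to combine the $\tfrac{1}{2}$-Lipschitz contraction of $\phi$ from Lemma~\ref{lem:phiproperties}~\ref{item:phicontraction} with Carath\'eodory convergence of inverse conformal maps on a fixed compact subinterval of $\HH$.

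First, I would fix the working interval. Set $M\defeq \max(6, 2R_N(F))$. By Lemma~\ref{lem:phiproperties}~\ref{item:phidecreasing}, $\phi(x)<x\leq M$ for $x\in[5,M]$ and $\phi(x)<6\leq M$ for $x\in[4,5]$; the same holds for $\tilde{\phi}$. Hence $[4,M]$ is forward invariant under both $\phi$ and $\tilde{\phi}$, and since the hypothesis gives $t\leq 2R_N(F)\leq M$, the entire orbits $\phi^n(t)$ and $\tilde{\phi}^n(t)$ stay in $[4,M]$ for every $n\geq 0$.

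Next, I would show that for $\rho$ large enough, every $\tilde{F}$ that is $(N,\rho)$-close to $F$ satisfies $|\phi(x)-\tilde{\phi}(x)|\leq \eps/2$ for all $x\in[4,M]$. This follows by contradiction: otherwise one could extract a sequence of functions $\tilde{F}_j$ that are $(N,\rho_j)$-close to $F$ with $\rho_j\to\infty$ and points $x_j\in [4,M]$ with $|\phi(x_j)-\tilde{\phi}_j(x_j)|>\eps/2$. Passing to a subsequence along which $x_j\to x_\infty\in[4,M]$, Lemma~\ref{lem:caratheodory} gives Carath\'eodory kernel convergence $T(\tilde{F}_j)\to T(F)$, and since $\F\subset\mc{H}_{\nu_0}$ by Proposition~\ref{prop:bounded_decorations}, Proposition~\ref{prop:caratheodory1} yields $\tilde{F}_j^{-1}(x_j)\to F^{-1}(x_\infty)$, a contradiction.

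Finally, I would induct on $n$. Writing $e_n\defeq |\phi^n(t)-\tilde{\phi}^n(t)|$, we have $e_0=0$ and, for $n\geq 0$,
\[ e_{n+1} \leq \bigl|\phi(\phi^n(t))-\phi(\tilde{\phi}^n(t))\bigr| + \bigl|\phi(\tilde{\phi}^n(t))-\tilde{\phi}(\tilde{\phi}^n(t))\bigr| \leq \frac{e_n}{2} + \frac{\eps}{2}, \]
using Lemma~\ref{lem:phiproperties}~\ref{item:phicontraction} for the first term and the uniform estimate from the previous step for the second (available because $\tilde{\phi}^n(t)\in[4,M]$). A trivial induction yields $e_n\leq \eps$ for all $n\geq 0$; this is in fact stronger than the stated conclusion, as the hypothesis $\min(\phi^n(t),\tilde{\phi}^n(t))\leq\tau$ is not needed in the estimate itself.

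The main obstacle is the second step, where the pointwise/sequential statement of Proposition~\ref{prop:caratheodory1} must be upgraded to a uniform approximation on a compact interval valid for all $(N,\rho)$-close functions simultaneously. Once this uniform estimate is in hand, the Lipschitz contraction of $\phi$ makes the propagation to all iterates essentially automatic.
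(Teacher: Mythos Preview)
Your argument has a genuine gap, caused by a typo in the statement that you took at face value. For $F\in\mathcal{F}_N$ the wiggles are indexed $0,\dots,N-1$, so $R_N(F)$ is \emph{undefined}; the intended hypothesis (as used both in the paper's own proof and in the application, Proposition~\ref{prop:Qcaratheodory}~\ref{item:Unclose}) is $t\le 2R_N(\tilde F)$. This is not a cosmetic change: since $R_N(\tilde F)\ge r_N(\tilde F)+2\ge \rho+2$, the upper bound on $t$ depends on $\tilde F$ and can be arbitrarily large, no matter how large $\rho$ is chosen. Consequently the interval $[4,M]$ in your first step is not a fixed compact set, and your second step collapses: Lemma~\ref{lem:caratheodory} together with Proposition~\ref{prop:caratheodory1} yields only \emph{locally} uniform convergence $\tilde F^{-1}\to F^{-1}$, so no choice of $\rho$ can force $\lvert\phi-\tilde\phi\rvert\le\eps/2$ on all of $[4,2R_N(\tilde F)]$ simultaneously for every $(N,\rho)$-close $\tilde F$.

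The paper deals with exactly this difficulty in two stages. For the early iterates, when $\phi^n(t)$ and $\tilde\phi^n(t)$ are still large (above $C^2 R_{N-1}$), it uses the growth estimate of Proposition~\ref{prop:Fgrowth}, which is valid for every function in $\mathcal F$, to obtain a \emph{multiplicative} comparison $\phi^n(t)/\tilde\phi^n(t)\in[\hat C^{-1},\hat C]$ (Claim~2). Once some iterate lands in a fixed compact interval $[4,\tilde\tau]$ depending only on $F$, $\tau$ and $\eps$, it invokes Carath\'eodory convergence on that fixed interval (Claim~1) and then runs precisely your contraction argument. Your induction is therefore the right endgame, but it must be preceded by a device---here Proposition~\ref{prop:Fgrowth}---that controls the orbits while they are still outside any interval on which uniform closeness of $\phi$ and $\tilde\phi$ can be guaranteed.
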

\begin{proof}
   By Lemma~\ref{lem:caratheodory} and Proposition~\ref{prop:caratheodory1}, 
     $F_j^{-1}\to F^{-1}$ locally uniformly on $\HH$. In particular, $\phi_j\to\phi$ uniformly on any compact subset of $(0,\infty)$. 
 We first  prove~\eqref{eqn:phicaratheodory} when $t$ belongs to a fixed interval of the real line.
  \begin{claim}[Claim 1]
    Let $\tilde{\tau}>4$ and $\delta>0$. Then there is $\rho(\tilde{\tau},\delta) > R_{N-1}(F)$ 
      with the following property. Suppose that           $\tilde{F}$ is $(N,\rho(\tilde{\tau},\delta))$-close to $F$. 
        If $t_1,t_2 \in [4,\tilde{\tau}]$ with $\lvert t_1 - t_2\rvert \leq \delta$, then 
             \[ \lvert \phi^n(t_1) - \tilde{\phi}^n(t_2)\rvert \leq \delta\] 
              for all $n\geq 0$. 
  \end{claim}
  \begin{subproof} 
   We may assume without loss of generality that $\tilde{\tau}\geq 6$, so that $\phi([4,\tilde{\tau}])\subset [4,\tilde{\tau}]$. 
         By Lemma~\ref{lem:caratheodory} and Proposition~\ref{prop:caratheodory1},  
         there is $\rho(\tilde{\tau},\delta)$ such that, under the hypotheses of the claim, 
         $\phi$ and $\tilde{\phi}$ are closer than $\delta/2$ on $[4,\tilde{\tau}]$. 

   The claim follows by induction from the case $n=1$, which follows from the contraction properties of $\phi$: 
     suppose that the claim holds
     for $n$, and let $t\leq \tilde{\tau}$. We have 
   \[ \lvert \phi(t_1) - \tilde{\phi}(t_2)\rvert \leq 
      \lvert \phi(t_1) - \phi(t_2)\rvert + \lvert \phi(t_2) - \tilde{\phi}(t_2)\rvert \leq 
      \frac{\lvert t_1 - t_2 \rvert}{2} + \frac{\delta}{2} \leq \delta \]
       by the induction hypothesis and Lemma~\ref{lem:phiproperties}.
  \end{subproof}

   Now let $\tilde{F}$ be as in the hypotheses, for $\rho$ sufficiently large (to be specified below). 
     Set $\tilde{T}\defeq T(\tilde{F})$ and $R_{N-1}\defeq R_{N-1}(F) = R_{N-1}(\tilde{F})$. 
  Let $C$ be the universal 
    constant from Proposition~\ref{prop:Fgrowth}. For $t\geq \exp(C\cdot R_{N-1})$, the point $z=F^{-1}(t)$ does not belong to the bottom two thirds of any wiggle
    of $T$, and hence 
     \[ \phi(t)/C \leq \log t \leq C\cdot \phi(t). \] 
   Moreover, if additionally $t\leq \exp(R_N(\tilde{F})/C)$, then $\tilde{\phi}(t)$ does not belong to the bottom two thirds of any wiggle of $\tilde{T}$, and the same
   estimate holds for $\tilde{\phi}$. In particular, both values are comparable up to a multiplicative error of at most $C^2$. 
   
    \begin{claim}[Claim 2]
      Set $\hat{C}\defeq \max(6,4C^2/3)$. 
       Suppose that $t \leq \exp(R_N(\tilde{F})/C)$, and that 
          \[ \max( \phi^n(t),\tilde{\phi}^n(t)) \geq C^2\cdot R_{N-1}. \] 
         Then
             \[  \frac{1}{\hat{C}} \leq \frac{\phi^n(t)}{\tilde{\phi}^n(t)} \leq \hat{C}.  \] 
    \end{claim}
    \begin{subproof}
      We prove the claim by induction; it is trivial for $n=0$. Now suppose that it holds for $n$. 
        Let us suppose that 
        $\phi^{n+1}(t) \geq C^2\cdot R_{N-1}$; the alternative case is analogous, exchanging the roles
         of $\phi$ and $\tilde{\phi}$. The inductive hypothesis implies 
        \[ \dist_{\HH}(\phi^n(t),\tilde{\phi}^n(t)) \leq \log\hat{C} + \frac{\pi}{2}. \]
       Therefore 
         \[ \lvert \tilde{\phi}(\phi^n(t)) - \tilde{\phi}^{n+1}(t)\rvert \leq 
               2 \dist_{T}(\tilde{\phi}(\phi^n(t)) , \tilde{\phi}^{n+1}(t))
                      \leq 2\log \hat{C} + \pi < \hat{C}, \]
     since $\hat{C} \geq 6$. Furthermore, the assumption on $\phi^{n+1}(t)$ means that
       $\phi^n(t)\geq \exp(C\cdot R_{N-1})$, by Proposition~\ref{prop:Fgrowth}. By the above, 
       \[ \phi^{n+1}(t)/C^2 \leq \tilde{\phi}(\phi^n(t)) \leq C^2 \cdot \phi^{n+1}(t). \]
       In particular, 
          \[ \phi^{n+1}(t) \leq C^2 \tilde{\phi}(\phi^n(t)) \leq C^2(\tilde{\phi}^n(t) + \hat{C}) \leq
                (C^2 + \hat{C}/4) \cdot \tilde{\phi}^n(t) \leq \hat{C}\cdot \tilde{\phi}^n(t). \]
      The opposite inequality follows analogously. 
    \end{subproof}

 We may suppose that \[ \tau\geq \max( 4\hat{C}/\eps, C^2\cdot R_{N-1}). \] Define $\tilde{\tau}\defeq \hat{C}\cdot \exp(C\cdot \exp(C\cdot \tau))$. 
  Then, by Proposition~\ref{prop:Fgrowth}, $\phi^2(t) > \tau/\hat{C}$ when $t> \tilde{\tau}$. The same
   holds for $\tilde{\phi}$, assuming that we have chosen $\rho$ larger than $\tilde{\tau}$. 
   Now choose 
      \[ \rho \geq \max( , \rho(\tilde{\tau},\delta) , \rho(\tilde{\tau},\eps) ), \]
       where $\rho(\tilde{\tau},\cdot)$ is as in Claim 1, with $\delta = \min(\hat{C},\eps/2)$. Note that
   $\tilde{\tau}$ depends only on $R_{N-1}$ and $\tau$, 
   and thus $\rho$ depends only on $F$, $\eps$ and $\tau$, as required. 
     
  Suppose that $t$ is as in the statement of the proposition. It is enough
   to prove the claim when $n$ is minimal with the stated property, since it then follow for larger $n$ 
   by Claim~1. Moreover, the claim holds for $n\leq 2$ by choice of $\tilde{\tau}$ and Claim 1. 

   So now suppose that $n\geq 2$. 
      We have $\phi^j(t),\tilde{\phi}^j(t) > \tau$ for $j < n$, and furthermore 
       either $\phi^{n-2}(t)\leq \tilde{\tau}/\hat{C}$ or 
       $\tilde{\phi}^{n-2}(t)\leq \tilde{\tau}/\hat{C}$. 
    By Claim~2, $\phi^{n-2}(t)$ and $\tilde{\phi}^{n-2}(t)$ are comparable by a factor of $\hat{C}$. 
    In particular both (and their images) are less than
     $\tilde{\tau}$. We now proceed similarly as in the proof of Claim~2. Firstly, again 
       \[ \lvert \phi^{n-1}(t) - \phi(\tilde{\phi}^{n-2}(t))\rvert \leq 2 \log \hat{C} + \pi < \hat{C}.   \] 
    By choice of $\tilde{\tau}$,
      \[ \lvert \phi(\tilde{\phi}^{n-2}(t)) - \tilde{\phi}^{n-1}(t)\rvert \leq \delta, \]
    and thus 
      \[ \lvert \phi^{n-1}(t) - \tilde{\phi}^{n-1}(t)\rvert \leq \hat{C} + \delta \leq 2\hat{C}. \] 
      
     In particular, we have 
       \[\dist_{\HH}(\phi^{n-1}(t) , \tilde{\phi}^{n-1}(t)) \leq \frac{2\hat{C}}{\tau} \leq  \delta/2.\]
      Arguing as above, we conclude that 
        \[ \lvert \phi^n(t) - \tilde{\phi}^n(t)\rvert \leq
           \lvert \phi^n(t) - \phi(\tilde{\phi}^{n-1}(t))\rvert + \lvert \phi(\tilde{\phi}^{n-1}(t)) - \tilde{\phi}^{n-1}(t)\rvert 
             \leq \delta + \delta = \eps. \qedhere \]
\end{proof}

\begin{defn}[Larger quadruples]
  If $Q = (A < B < C < D)$ and $\tilde{Q} = (\tilde{A} < \tilde{B} < \tilde{C} < \tilde{D})$ are quadruples, we write
     $Q < \tilde{Q}$ if 
        \[ \tilde{A} < A < B < \tilde{B} < \tilde{C} < C < D < \tilde{D}. \]
\end{defn}

\begin{prop}[$\U_n$ and approximation]\label{prop:Qcaratheodory}
  Let $F\in\mathcal{F}_N$, and let $Q < \tilde{Q}$ be quadruples. Let $n_0\geq 0$. 
  
  Then there is $\rho> R_{N-1}(F)$ with the following property. If $\tilde{F}\in \mathcal{F}$ is $(N,\rho)$-close to $F$, then 
  \begin{enumerate}[(a)]
    \item For $n\leq n_0$, all elements of $\U_n(\tilde{Q},\tilde{\phi})$ are to the left of $\rho$.\label{item:intervalstotheleft}
    \item For every $n\geq 0$ and every $\tilde{J}\in \U_n(\tilde{Q},\tilde{\phi})$ that is to the left of $2 R_N(\tilde{F})$, 
       there is $J\in \U_n(Q,\phi)$ with $J\subset \tilde{J}$.\label{item:Unclose}
    \item If $J$ as in~\ref{item:Unclose} is mapped crookedly over $Q$ by $\phi^n$, then $\tilde{J}$ is mapped crookedly over $\tilde{Q}$ by $\tilde{\phi}^n$.
   \end{enumerate}
  
  In particular, for $n\leq n_0$,   
          $\#\U_n(\tilde{Q},\tilde{\phi}) \leq \U_n(Q,\phi)$ and 
           \[ \# \U_n(\tilde{Q},\tilde{\phi}) - \# \hat{\U}_n(\tilde{Q},\tilde{\phi}) \leq \# \U_n(Q,\phi) - \# \hat{\U}_n(Q,\phi). \] 
\end{prop}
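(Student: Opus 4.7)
The plan is to establish (a), (b), (c) in sequence, taking $\rho$ as the maximum of the thresholds required by each; the ``in particular'' clause then follows from an easy injection argument. For (a), I would use that the estimate $\phi'(t)\leq 5+2(\log t - \log 5)$ of Lemma~\ref{lem:phiproperties}\ref{item:phiestimate} applies to the one-dimensional projection $\phi'$ of \emph{any} $F'\in\mathcal{H}$. Iterating this bound $n_0$ times yields a universal constant $M=M(\tilde{D},n_0)$ such that ${\phi'}^n(t)\leq\tilde{D}$ with $n\leq n_0$ and $t\geq 5$ forces $t\leq M$. By Lemma~\ref{lem:Unproperties}\ref{item:Unendpoints}, the endpoints of any $\tilde{J}\in\U_n(\tilde{Q},\tilde{\phi})$ (for $n\leq n_0$) satisfy $\tilde{\phi}^n(\text{endpoint})\in\{\tilde{A},\tilde{D}\}\subset[4,\tilde{D}]$, so they lie in $[4,M]$, and any $\rho>M$ secures (a). For (b), set $\eps\defeq\tfrac{1}{2}\min(A-\tilde{A},\tilde{B}-B,C-\tilde{C},\tilde{D}-D)>0$ and apply Proposition~\ref{prop:caratheodory} with $\tau\defeq\tilde{D}$ to obtain a threshold $\rho_1$. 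Given $\tilde{J}=[\tilde{a},\tilde{d}]\in\U_n(\tilde{Q},\tilde{\phi})$ with $\tilde{d}\leq 2R_N(\tilde{F})$, the proposition yields $\lvert\phi^n(\tilde{a})-\tilde{\phi}^n(\tilde{a})\rvert\leq\eps$ and similarly at $\tilde{d}$, so one of $\phi^n(\tilde{a}),\phi^n(\tilde{d})$ is less than $A$ and the other greater than $D$. By continuity $\phi^n(\tilde{J})\supset[A,D]$, and Lemma~\ref{lem:Unproperties}\ref{item:minimalsubset} produces the required $J\in\U_n(Q,\phi)$ inside $\tilde{J}$.

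Part (c) is the substantive step, and the only place where the strict separation $Q<\tilde{Q}$ is genuinely exploited. Suppose $J\subset\tilde{J}$ from (b) is mapped crookedly over $Q$ by $\phi^n$; up to symmetry there exist $t_1<\omega<t_2$ in $J$ with $\phi^n(t_1)=\phi^n(t_2)=B$ and $\phi^n(\omega)=C$. Applying the approximation of (b) at these three points (which lie in $[4,2R_N(\tilde{F})]$ and have $\phi^n$-image bounded by $\tau$) yields $\tilde{\phi}^n(t_1),\tilde{\phi}^n(t_2)\leq B+\eps<\tilde{B}$ and $\tilde{\phi}^n(\omega)\geq C-\eps>\tilde{C}$, thanks to the gaps $\tilde{B}-B\geq 2\eps$ and $C-\tilde{C}\geq 2\eps$. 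The intermediate value theorem applied to $\tilde{\phi}^n$ on $[t_1,\omega]$ and on $[\omega,t_2]$ separately then furnishes preimages of both $\tilde{B}$ and $\tilde{C}$ in each subinterval. Writing $\alpha_1<\omega<\alpha_2$ for any two $\tilde{B}$-preimages chosen one from each subinterval, and $\beta_1<\omega<\beta_2$ analogously for $\tilde{C}$, the convex hulls $[\alpha_1,\alpha_2]$ and $[\beta_1,\beta_2]$ both contain $\omega$, so they intersect, witnessing crookedness of $\tilde{J}$ over $\tilde{Q}$.

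For the concluding injection count, parts (a) and (b) associate to each $\tilde{J}\in\U_n(\tilde{Q},\tilde{\phi})$ (for $n\leq n_0$) some $J\in\U_n(Q,\phi)$ with $J\subset\tilde{J}$, and this association is injective because distinct elements of $\U_n(\tilde{Q},\tilde{\phi})$ have pairwise disjoint interiors by Lemma~\ref{lem:Unproperties}\ref{item:Undisjoint}. This gives $\#\U_n(\tilde{Q},\tilde{\phi})\leq\#\U_n(Q,\phi)$. Taking the contrapositive of (c), the same assignment restricts to an injection from $\U_n(\tilde{Q},\tilde{\phi})\setminus\hat{\U}_n(\tilde{Q},\tilde{\phi})$ into $\U_n(Q,\phi)\setminus\hat{\U}_n(Q,\phi)$, from which the difference inequality follows. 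The main obstacle I expect is the $\eps$-bookkeeping in (c): one must use all four gaps provided by $Q<\tilde{Q}$ simultaneously so that the perturbed function $\tilde{\phi}^n$ still crosses the new critical levels $\tilde{B}$ and $\tilde{C}$ in the correct order to realise the interlacing of preimages.
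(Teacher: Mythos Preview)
Your arguments for (b), (c), and the final injection count are correct and in fact carry out precisely the details that the paper leaves to the reader; the paper's own proof simply takes $\eps$ to be the distance between $Q$ and $\tilde{Q}$, sets $\tau>\tilde{D}$, and invokes Proposition~\ref{prop:caratheodory}, leaving the verification to the reader.

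There is, however, a genuine gap in your treatment of (a). You invoke the estimate $\phi'(t)\leq 5+2(\log t-\log 5)$ from Lemma~\ref{lem:phiproperties}\ref{item:phiestimate} and assert that iterating it forces $t\leq M$ whenever ${\phi'}^{n}(t)\leq \tilde{D}$. But that inequality is an \emph{upper} bound on $\phi'$: it tells you that $\phi'$ shrinks its argument, not that large inputs produce large outputs. Knowing ${\phi'}^{n}(t)\leq\tilde{D}$ is compatible with $t$ arbitrarily large, since nothing in Lemma~\ref{lem:phiproperties} prevents $\phi'(t)$ from being small for large $t$ --- indeed, for a general $F'\in\mathcal{H}$ whose tract has a long wiggle returning to small real parts, $\phi'(t)$ can be bounded while $t\to\infty$. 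So no constant $M=M(\tilde{D},n_0)$ valid for \emph{all} $F'\in\mathcal{H}$ exists.

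What is needed is a \emph{lower} bound on $\tilde{\phi}$, i.e.\ an upper bound on $\lvert\tilde{F}(z)\rvert$ in terms of $\re z$, and this is exactly what Proposition~\ref{prop:Fgrowth} supplies for functions in $\mathcal{F}$. If $\tilde{\phi}(t)\leq \tilde{D}<\rho$, then $z=\tilde{F}^{-1}(t)$ lies to the left of every wiggle of $\tilde{F}$ beyond the $(N{-}1)$-st, so Proposition~\ref{prop:Fgrowth} gives $\log t\leq C\max(\tilde{D},R_{N-1}(F))$. Iterating this $n_0$ times yields the bound $M$ (depending on $\tilde{D}$, $n_0$, \emph{and} $R_{N-1}(F)$), and any $\rho>M$ then works. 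This is the route the paper indicates; once you substitute Proposition~\ref{prop:Fgrowth} for Lemma~\ref{lem:phiproperties}\ref{item:phiestimate} in part~(a), your proof is complete.
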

\begin{proof}
  That we can ensure~\ref{item:intervalstotheleft} by choosing $\rho$ sufficiently large follows from Proposition~\ref{prop:Fgrowth}. 
   Now let $\eps$ be the distance between the quadruples $Q$ and $\tilde{Q}$; i.e. 
    \[ \eps = \max(A - \tilde{A}, \tilde{B} - B , C - \tilde{C}, \tilde{D} - D). \]    
    Take $\tau > \tilde{D}$, and choose $\rho$ according to Proposition~\ref{prop:caratheodory}. We leave it to 
     the reader to verify that the claims follow from this choice. 
\end{proof}

\section{Proof of the main theorem}\label{sec:pseudoarcs}

  We are now ready to show the following result, which implies Theorem~\ref{thm:pseudoarc1}.
  
  \begin{thm}[Model function with pseudo-arcs]\label{thm:Fpseudoarcs}
    There is $F\in\mathcal{F}$ such that every Julia continuum of the periodic extension $\hat{F}$ is a pseudo-arc. 
    
    Moreover, $F$ can be chosen such that 
     \begin{equation}\label{eqn:Flowerorder} \liminf_{r\to\infty} \max_{\re \zeta = r} \frac{\log \re F(\zeta) }{r} = 1/2. \end{equation}
  \end{thm}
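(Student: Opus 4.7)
The plan is to build $F$ as a Carath\'eodory limit of an inductively defined sequence $F_j\in\mathcal{F}_{N_j}$, with each $F_{j+1}$ obtained from $F_j$ by adjoining one further wiggle, and then apply Theorem~\ref{thm:pseudoarcrealisation}. By Lemma~\ref{lem:Unproperties}, parts~\ref{item:minimalsubset} and~\ref{item:allcrooked}, the hypothesis of Theorem~\ref{thm:pseudoarcrealisation} reduces to the following: for every integer quadruple $Q$ with $\lvert Q\rvert\geq K$, there exists $n$ with $\mc{U}_n(Q,\phi)=\hat{\mc{U}}_n(Q,\phi)$. Indeed, for any $k\geq n$ and any interval $I$ with $\phi^k(I)\supset [A,D]$, part~\ref{item:minimalsubset} provides a sub-interval of $I$ lying in $\mc{U}_k(Q,\phi)=\hat{\mc{U}}_k(Q,\phi)$ by~\ref{item:allcrooked}, so this sub-interval (and hence $I$) is mapped crookedly over $Q$.

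For the construction, enumerate all integer quadruples of size $\geq K$ as $(Q_m)_{m\in\N}$, with each $Q_m$ appearing infinitely often. Take $F_0$ to be the conformal isomorphism of the bare half-strip $S$, i.e.\ $N_0=0$. Given $F_j$, address the target quadruple $Q_{m(j)}$: if $\mc{U}_n(Q_{m(j)},\phi_j)=\hat{\mc{U}}_n(Q_{m(j)},\phi_j)$ already for some $n$, simply adjoin a harmless wiggle far to the right; otherwise, choose $(r_{N_j},R_{N_j})$ so that $F_{j+1}$ is $(N_j,\rho_j)$-close to $F_j$ with $\rho_j$ large enough that Proposition~\ref{prop:Qcaratheodory} applies simultaneously to each quadruple $Q_{m(l)}$, $l\leq j$, that has already been dealt with. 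By the last inequality of Proposition~\ref{prop:Qcaratheodory}, this preserves all previously-established equalities, since each defect $\#\mc{U}_n-\#\hat{\mc{U}}_n$ can only decrease under such approximation. Additionally, we arrange the new wiggle to reduce strictly the defect for $Q_{m(j)}$ at some level $n$. Since the defect is a non-negative integer, finitely many visits to $Q_m$ suffice to drive it to zero, after which it stays there. Thus in the limit $F=\lim F_j\in\mathcal{F}$ (existing by Lemma~\ref{lem:caratheodory} and Proposition~\ref{prop:caratheodory1}), every $Q_m$ satisfies the required equality.

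The principal obstacle is the single-wiggle reduction step: given $F$ and $J\in\mc{U}_n(Q,\phi)\setminus\hat{\mc{U}}_n(Q,\phi)$, locate $(r,R)$ producing $\tilde F$ with strictly smaller defect. Such a $J$ corresponds to an essentially monotone branch of $\phi^n$ onto $[A,D]$; by placing a new wiggle $[r,R]$ carefully relative to the backward orbits of $B$ and $C$ under $\phi$, we introduce an additional folded branch of $\tilde\phi^{-1}$ through the wiggle region, so that after one further iteration this fold lies inside the appropriate subinterval $\tilde J\subset J$ and produces two preimages of $B$ (resp.\ $C$) that surround a preimage of $C$ (resp.\ $B$)~-- precisely crookedness over $\tilde Q$ for a slightly enlarged quadruple, and hence over $Q$. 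Keeping the effect localized, so that no previously crooked elements are disturbed and no new non-crooked elements appear, is secured by placing the wiggle sufficiently far to the right and invoking Propositions~\ref{prop:Un1} and~\ref{prop:Qcaratheodory}\ref{item:Unclose}. The finicky book-keeping of backward orbits relative to the new wiggle is the technically heaviest part of the proof.

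Finally, for~\eqref{eqn:Flowerorder}: the inequality $\liminf\geq 1/2$ is immediate from~\eqref{eqn:stripestimate} applied to the containing strip $\tilde{S}$, which gives $\log\re F(\zeta)\geq \re\zeta/2-O(1)$ along $F^{-1}((0,\infty))$. For the matching upper bound, choose the inter-wiggle gaps $r_{N_{j+1}}-R_{N_j}$ to grow so rapidly that, for arbitrarily large $r$ lying in the middle of such a gap, $T(F)$ coincides with the bare half-strip $S$ on a very long horizontal neighbourhood of $\{\re\zeta=r\}$. On such neighbourhoods $F$ is uniformly close to the canonical conformal isomorphism of $S$ onto $\HH$, for which $\log\re F(\zeta)\sim \re\zeta/2$. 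Taking the $\liminf$ along such $r$ yields $\leq 1/2$, completing the theorem.
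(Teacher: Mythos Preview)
Your proposal follows the same strategy as the paper: construct $F$ as a limit of finitely-wiggled approximants, add wiggles to force crookedness, and appeal to Theorem~\ref{thm:pseudoarcrealisation}. The organizational difference is that the paper handles each quadruple \emph{completely} before moving on---this is packaged as Proposition~\ref{prop:wiggles}, which in turn iterates the one-wiggle step Proposition~\ref{prop:subwiggles} along a finite chain $S_0<S_1<\dots<S_m=Q$---whereas you interleave quadruples, visiting each infinitely often and reducing the defect by one at each visit. Your scheme can be made to work, but note that every invocation of Proposition~\ref{prop:Qcaratheodory} consumes a strict enlargement $Q<\tilde Q$: to preserve \emph{partial} progress on $Q_m$ across the wiggles inserted for other quadruples, and then to reduce further at the next visit, you must fix in advance a nested family $Q_m^{(0)}<Q_m^{(1)}<\dots<Q_m$ and step through it. As written, the assertion ``each defect can only decrease under such approximation'' is not justified without this bookkeeping. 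The paper's batch organization sidesteps the issue: once a quadruple is finished, a single enlargement $\tilde Q_{k+1}<Q_{k+1}$ carries it through to the limit. Your argument for~\eqref{eqn:Flowerorder} is the same idea as the paper's; the paper makes the upper bound precise via a conformal-modulus estimate on the quadrilateral bounded by two vertical geodesics rather than by asserting closeness to the canonical map of $S$.
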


  We prove this theorem by inductively applying the following proposition. 
 \begin{prop}[Creating crookedness over prescribed quadruples]\label{prop:wiggles}
   Let $F\in\mathcal{F}_N$, let $Q$ be a quadruple, and let $\rho > 0$.

  Then there is $\tilde{F}\in \mathcal{F}_{\hat{N}}$ for some $\hat{N} > N$,  which is $(N,\rho)$-close to $F$, and such that
    $\U_n(Q,\tilde{F}) = \hat{\U}_n(Q,\tilde{F})$ for some $n\geq 0$. 
 \end{prop}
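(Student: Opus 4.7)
The plan is to construct $\tilde F$ by adding a finite number of wiggles to the tract of $F$, inductively ensuring that the non-crookedness count over $Q$ strictly decreases until all minimal intervals at some level are crooked.

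I will first use Proposition~\ref{prop:Qcaratheodory} for the preservation half of the argument. Picking an enlarged quadruple $\tilde Q > Q$ and a target level $n_0$, the proposition yields a threshold $\rho_0 \geq \rho$ such that for any function that is $(N, \rho_0)$-close to the current one, the non-crookedness counts at levels $\leq n_0$ are bounded by the corresponding quantities for the current function. In particular, adding new wiggles at positions $\geq \rho_0$ does not create additional non-crookedness at these levels; existing crookedness transfers, up to a small enlargement of the quadruple, which can be absorbed by iterating this argument with a nested sequence of approximating quadruples $Q < Q^{(1)} < Q^{(2)} < \cdots$ chosen so that $\rho_k \to \infty$ slowly enough to retain control.

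The inductive step is the \emph{creation} of new crookedness. Given a non-crooked interval $J \in \mathcal{U}_n(Q,\phi)\setminus\hat{\mathcal{U}}_n(Q,\phi)$ for some $n$, I add a new wiggle $(r_{\mathrm{new}}, R_{\mathrm{new}})$ at real parts chosen to lie within (or near) an iterate $\phi^j(J)$ for some $j < n$. By Proposition~\ref{prop:Un1}, placing the wiggle so that it falls inside $\phi^j(J)$ forces $\#\mathcal{U}_1(\phi^j(J),\tilde\phi) > 1$, introducing additional minimal subintervals at the next level. By choosing the wiggle's aspect ratio so that the turn-around at $R_{\mathrm{new}}$ induces $\tilde\phi$ to fold across the quadruple values $B$ and $C$ in an interleaved pattern, these new subintervals will be mapped crookedly over $Q$. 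The crookedness then propagates via Lemma~\ref{lem:Unproperties}\ref{item:crookedpropagation} to make the corresponding $\tilde J$ crooked at level $n+1$ (or a related level depending on how the iterate chain reindexes after adding the wiggle).

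The main obstacle is the geometric analysis of how adding one wiggle at a specific position creates the precise interleaving of preimages of $B$ and $C$ required for crookedness. This involves tracing the conformal map $\tilde F^{-1}$ across the wiggle region, using Propositions~\ref{prop:Fgrowth} and~\ref{prop:caratheodory} to estimate quantitatively how $\tilde\phi$ oscillates as $t$ traverses $[r_{\mathrm{new}}, R_{\mathrm{new}}]$, and choosing $R_{\mathrm{new}}/r_{\mathrm{new}}$ large enough that the resulting folds cross both $B$ and $C$ at least three times in the order needed. Once this geometric step is established, the induction terminates in finitely many steps, since the initial total non-crookedness count $\sum_{n \leq n_0}(\#\mathcal{U}_n(Q,\phi)-\#\hat{\mathcal{U}}_n(Q,\phi))$ is finite by Lemma~\ref{lem:Unproperties}\ref{item:Unfinite}, and each step strictly decreases it. Lemma~\ref{lem:Unproperties}\ref{item:allcrooked} then propagates the achieved crookedness to all higher levels, yielding $\tilde F$ as required.
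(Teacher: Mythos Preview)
Your overall strategy---iteratively add wiggles, each reducing a non-crookedness count, with Proposition~\ref{prop:Qcaratheodory} controlling what happens under perturbation---matches the paper's. However, two of your ingredients are set up incorrectly.

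First, the direction of your nested sequence of quadruples is backwards. Proposition~\ref{prop:Qcaratheodory} transfers crookedness from a \emph{smaller} quadruple $Q$ (for the old function) to a \emph{larger} quadruple $\tilde Q > Q$ (for the perturbed function). So each time you add a wiggle, the quadruple over which you retain control must grow. If you start at $Q$ and enlarge to $Q < Q^{(1)} < Q^{(2)} < \cdots$, you end with crookedness over some $Q^{(k)} > Q$, which does not give crookedness over $Q$. The paper instead fixes a chain $S_0 < S_1 < \cdots < S_m = Q$ with $S_0$ \emph{smaller} than $Q$, and works upward so that the final step lands exactly on~$Q$.

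Second, your mechanism for creating crookedness is off. You propose to place a wiggle inside $\phi^j(J)$ so as to force $\#\U_1(\phi^j(J),\tilde\phi)>1$, citing Proposition~\ref{prop:Un1}; but that proposition gives a \emph{sufficient} condition for $\#\U_1=1$, and its failure does not give $\#\U_1>1$. More importantly, introducing additional minimal subintervals would \emph{increase} the non-crookedness count and spoil termination. The paper's inductive step (Proposition~\ref{prop:subwiggles}) avoids this: it chooses the right-most non-crooked interval $[\hat A,\hat D]\in\U_{n_1}(Q,\phi)$, picks $\hat B,\hat C\in(\hat A,\hat D)$ with $\phi^{n_1}(\{\hat B,\hat C\})=\{B,C\}$, and places a single wiggle with $r_N=\hat B-\nu_0$, $R_N=\hat C+\nu_0$. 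The shape of the tract then forces $\tilde F^{-1}$ to traverse real parts near $\hat C$, then $\hat B$, then $\hat C$ again, making this one interval crooked; meanwhile Propositions~\ref{prop:Un1} and~\ref{prop:Qcaratheodory} together ensure that $\#\U_n(\tilde Q,\tilde\phi)$ does \emph{not} increase, because no element of $\U_n(Q,\phi)$ lies entirely inside $[r_N-\nu_0,R_N+\nu_0]$ (Lemma~\ref{lem:Unproperties}\ref{item:notnested}). Consequently the correct termination quantity is the eventually constant value $\lim_{n\to\infty}\bigl(\#\U_n-\#\hat\U_n\bigr)$, not your sum $\sum_{n\le n_0}(\cdots)$, and it strictly decreases at each step.
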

 
 \begin{proof}[Proof of Theorem~\ref{thm:Fpseudoarcs}, using Proposition~\ref{prop:wiggles}]
   Let $(Q_k)_{k=1}^{\infty}$ be the countably many quadrilaterals appearing in the hypothesis of 
    Theorem~\ref{thm:pseudoarcrealisation}. 
    We inductively construct a sequence $(F_k)_{k=0}^{\infty}$ with $F_k\in\mathcal{F}_{N_k}$ for an increasing sequence $(N_k)$; here $N_0 = 0$ and $F_0$ is the
       unique function in $\mathcal{F}_0$; i.e. the conformal map $F_0\colon S\to\HH$ with $F_0(5)=5$ and $F_0(\infty)=\infty$. 
       
  Along with the functions $F_k$, we construct a sequence $\rho_k$, in such a way that 
   \begin{enumerate}[(a)]
     \item $F_{k+1}$ is $(N_k,\rho_k)$-close to $F_k$;
     \item for every $k\geq 1$, there is some $n_k$ with the following property:
          if $F\in\mathcal{F}$ is $(N_k,\rho_k)$-close to $F_k$, then
          $\U_{n_k}(Q_{k} , F) = \hat{\U}_{n_k}(Q_{k},F)$.\label{item:crookedness}
   \end{enumerate}       
  The number $\rho_0$ can be chosen in an arbitrary manner. 

   Suppose that $F_k$ and $\rho_k$ have been constructed. Apply Proposition~\ref{prop:wiggles} to the function
      $F_k$, the value $\rho_k$, and a slightly smaller quadruple $\tilde{Q}_{k+1}<Q_{k+1}$, to obtain the function $F_{k+1}\in \mathcal{F}_{N_{k+1}}$.
      This function has the property that $\U_{n_{k+1}}(\tilde{Q}_{k+1}, F_{k+1})=\tilde{U}_{n_{k+1}}(\tilde{Q}_{k+1},F_{k+1})$ for some $n_{k+1}\geq 0$. 
      By Proposition~\ref{prop:Qcaratheodory}, there is $\rho_{k+1}$ such that 
      $\U_{n_{k+1}}(Q_{k+1},F) = \tilde{U}_{n_{k+1}}(Q_{k+1},F)$ for every $F$ that is $(N_{k+1},\rho_{k+1})$-close to $F_{k+1}$. 
       This completes the inductive construction. 
      
   Now let $F\in\mathcal{F}$ be the limit of the functions $F_k$; that is, $F$ has $N(F)=\infty$ and 
      is defined by the sequence $(r_k(F_{k+1}),R_k(F_{k+1}))_{k=0}^{\infty}$. Then
      $F$ is $(N_k,\rho_k)$-close to $F_k$ for every $k\geq 1$. By
      Lemma~\ref{lem:Unproperties}~\ref{item:allcrooked} and
      property~\ref{item:crookedness} of the inductive construction,
      we have $\U_{n}(Q_k,F) = \hat{\U}_n(Q_k,F)$ for all $n\geq n_k$.
        Hence $F$ satisfies the hypotheses of Theorem~\ref{thm:pseudoarcrealisation}, and 
      every Julia continuum of $\hat{F}$ is a pseudo-arc. 
      
    We claim that, if each $\rho_{k}$ is chosen sufficiently large, depending on 
     $R_{N_k-1} \defeq R_{N_k-1}(F) = R_{N_k-1}(F_k)$, then 
      $F$ additionally satisfies~\eqref{eqn:Flowerorder}. Indeed, let $a\in T(F)$ be such that
      $\re a = R_{N_k-1} + 1$ and such that $\lvert F(a)\rvert$ is maximal. Also let 
      $\zeta \in T(F)$ with 
         $\re \zeta = \rho_k - \nu_0$; we may assume that $\rho_k$
         is chosen so large that $\re \zeta > \re a + 2\nu_0$, and in particular $\lvert F(\zeta)\rvert > \lvert F(a)\rvert$.
          
          Let $\gamma_a$ and $\gamma_{\zeta}$ be the ``vertical'' geodesics through $a$ and $\zeta$, respectively,
          and let $Q\subset T$ be the quadrilateral bounded by $\gamma_a$ on the left, $\gamma_z$ on the right,
          and pieces of the upper and lower boundaries of $T$. Then $\log F$ maps $Q$ conformally to the rectangle
             \[ R \defeq \{ x + iy \colon \log \lvert F(a)\rvert < x < \log \lvert F(\zeta)\rvert \text{ and }
                  \lvert y \rvert < \pi/2 \}. \] 
         The conformal modulus of this rectangle is 
            \[ \mod(R) = \frac{1}{\pi}\cdot \log \frac{\lvert F(\zeta)\rvert}{\lvert F(a)\rvert}. \] 
         On the other hand,
            \[ Q \subset \{ x + iy\colon 0 < x < \re \zeta + \nu_0 \text{ and }\lvert y\rvert < \pi \}, \] 
         and hence $\mod(Q) \leq (\re \zeta + \nu_0)/(2\pi)$. Hence 
            \[ \log \lvert F(\zeta)\rvert \leq \frac{\re \zeta}{2} + \frac{\nu_0}{2} + \log \lvert F(a)\rvert \leq
                          \frac{\re \zeta}{2} + \frac{\nu_0}{2} + C\cdot (R_{N_k-1}+1), \]
       where $C$ is the constant from Proposition~\ref{prop:Fgrowth}. If $\rho_k$ is chosen so large that 
            \[ \frac{\nu_0}{2} + C\cdot (R_{N_k-1}+1) \leq \frac{ \rho_k-\nu_0}{n} = \frac{\re \zeta}{n}, \]
       then 
            \[ \log \lvert F(\zeta)\rvert \leq \re \zeta\cdot \left(\frac{1}{2} + \frac{1}{n}\right), \]
    and~\eqref{eqn:Flowerorder} follows. 
 \end{proof}
 
To prove Proposition~\ref{prop:wiggles}, we consider a quadrilateral $Q$, and proceed to inductively 
   introduce wiggles over the intervals in $\tilde{U}_n(Q,\phi)$. More precisely, we show the following.
   
   \begin{prop}[Increasing the number of intervals mapped crookedly]\label{prop:subwiggles}
    Let $F\in\mathcal{F}_N$, let $Q<\tilde{Q}$ be quadruples, and let $\rho > 0$. Let $m$ be the integer such that 
      \[ \# \U_n(Q,\phi) - \# \hat{\U}_n(Q,\phi) = m\]
     for sufficiently large $n$.
     
     Suppose that $m>0$.
   Then there is $\tilde{F}\in \mathcal{F}_{N+1}$,  which is $(N,\rho)$-close to $F$, such that
      \[ \# \U_n(\tilde{Q},\tilde{\phi}) - \# \hat{\U}_n(\tilde{Q},\tilde{\phi}) \leq m-1 \]
     for all sufficiently large $n$. 
   \end{prop}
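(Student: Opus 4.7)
The plan is to identify one non-crooked interval in $\mathcal{U}_{n_0}(Q,\phi)$ at a level past which the count has stabilised at $m$, follow one of its non-crooked branches in the preimage tree deep enough to reach the region where a new wiggle is allowed, and position the wiggle so that its zig-zag produces Henderson-style interleaved preimages of $\tilde{B}$ and $\tilde{C}$ in the corresponding new minimal interval, forcing it to be crooked.  More precisely, fix $n_0$ so that $\#\mathcal{U}_n(Q,\phi)-\#\hat{\mathcal{U}}_n(Q,\phi)=m$ for all $n\geq n_0$, pick $J^{*}\in\mathcal{U}_{n_0}(Q,\phi)\setminus\hat{\mathcal{U}}_{n_0}(Q,\phi)$, and fix a single non-crooked branch of its preimage tree, giving intervals $L_k\in\mathcal{U}_k(J^{*},\phi)\subset\mathcal{U}_{n_0+k}(Q,\phi)$ for every $k\geq 0$ along which each $\phi^j(L_k)$ ($0\leq j\leq k$) is non-crooked (by the contrapositive of Lemma~\ref{lem:Unproperties}\ref{item:crookedpropagation}).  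Since $\phi$ contracts by at least a factor of $2$ and $\phi(t)<5+2(\log t-\log 5)$ for $t\geq 5$, iterated $\phi$-preimages grow faster than any tower of exponentials, so for $k$ large enough $L_k$ and the unique points $v_B,v_C\in\phi(L_k)$ satisfying $\phi^{n_0+k-1}(v_B)=\tilde{B}$, $\phi^{n_0+k-1}(v_C)=\tilde{C}$ along the branch will both lie far to the right of $\rho$.

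I then pick the new wiggle at $(r_N,R_N)$ with $\rho\leq r_N<\min(v_B,v_C)$ and $R_N>\max(v_B,v_C)$, so that $[r_N,R_N]$ strictly contains $\{v_B,v_C\}$ while satisfying the geometric constraints $r_N>R_{N-1}(F)+1$ and $R_N>r_N+2$ from Section~\ref{sec:tracts}.  With this choice, Proposition~\ref{prop:caratheodory1} together with Lemma~\ref{lem:caratheodory} guarantees that the wiggle's $t$-interval $[\tau_1,\tau_4]$ is determined by $r_N$ up to small error, and $\tau_1\approx\tilde\phi^{-1}(r_N)$ lies in $L_k$, so $[\tau_1,\tau_4]\subset\tilde L_k$, where $\tilde L_k\in\mathcal{U}_{n_0+k}(\tilde{Q},\tilde\phi)$ is the unique minimal interval that corresponds to $L_k$ after absorbing the new wiggle.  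By construction of the tract, $\tilde\phi$ restricted to $[\tau_1,\tau_4]$ takes the values $r_N\to R_N\to r_N\to R_N$ monotonically on the three subarcs, so each $v\in(r_N,R_N)$ has exactly three $\tilde\phi$-preimages in $[\tau_1,\tau_4]$.

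The crookedness of $\tilde L_k$ is now the standard Henderson combinatorial observation: labelling the three $\tilde\phi$-preimages of $v_B$ by $a_1<a_2<a_3$ and those of $v_C$ by $b_1<b_2<b_3$, the assumption $v_B<v_C$ (say) forces the interleaving $a_1<b_1<b_2<a_2<a_3<b_3$.  These six points are precisely the $\tilde\phi^{n_0+k}$-preimages of $\tilde{B}$ and $\tilde{C}$ that lie in the ``new'' portion $[\tau_1,\tau_4]$ of $\tilde L_k$, and the non-branch preimages from outside the wiggle land on the original branch of $L_k$, contributing a single point of each to $\tilde L_k\cap\tilde\phi^{-(n_0+k)}(\tilde{B})$ and $\tilde L_k\cap\tilde\phi^{-(n_0+k)}(\tilde{C})$ on the far-left side of $\tilde L_k$; in either ordering the convex hulls of the two preimage sets overlap on at least $[b_1,a_3]$, so $\tilde L_k\in\hat{\mathcal{U}}_{n_0+k}(\tilde{Q},\tilde\phi)$.

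For the count, Proposition~\ref{prop:Qcaratheodory} (applied with $\rho$ sufficiently large, depending on $L_k$ and on $Q<\tilde Q$) establishes an injection from $\mathcal{U}_{n_0+k}(\tilde{Q},\tilde\phi)$ into $\mathcal{U}_{n_0+k}(Q,\phi)$ preserving crookedness, and the only element whose status changes is $\tilde L_k$, which becomes crooked.  By Lemma~\ref{lem:Unproperties}\ref{item:crookedpropagation} the crookedness of $\tilde L_k$ propagates down the whole branch of $\tilde L_k$, so that branch contributes nothing to $\#\mathcal{U}_n(\tilde{Q},\tilde\phi)-\#\hat{\mathcal{U}}_n(\tilde{Q},\tilde\phi)$ for all $n\geq n_0+k$, yielding the desired inequality $\leq m-1$ at every sufficiently large level.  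The main obstacle is not the combinatorial mechanism but the geometric bookkeeping: one must verify that $[\tau_1,\tau_4]$ genuinely sits in $\tilde L_k$, that $\tilde L_k$ is a single minimal interval rather than splitting, and that no stray new non-crooked branch is created outside the $J^*$-branch.  All three points are controlled by taking $\rho$, and hence $k$, large enough and combining Propositions~\ref{prop:Fgrowth},~\ref{prop:Un1},~\ref{prop:caratheodory}, and~\ref{prop:Qcaratheodory}.
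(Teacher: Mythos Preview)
Your approach is essentially the paper's: choose a non-crooked minimal interval at a sufficiently deep level, place a single new wiggle over chosen preimages of the inner pair of the quadruple, use the three-pass geometry of the wiggle to force crookedness of the corresponding interval for $\tilde{\phi}$, and use Propositions~\ref{prop:Un1} and~\ref{prop:Qcaratheodory} to show the total count of minimal intervals does not increase while crookedness of the others is inherited. The paper's choice of the \emph{right-most} non-crooked interval, and your choice of following a branch from a fixed $J^*$, amount to the same thing once $n_0$ is large enough that all intervals lie past the last wiggle of $F$ (so the branch is unique by Proposition~\ref{prop:Un1}).

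Two points you flag as ``geometric bookkeeping'' do need to be made precise, and the paper handles them slightly differently from your sketch. First, the order of choosing constants: you write that Proposition~\ref{prop:Qcaratheodory} is applied ``with $\rho$ sufficiently large, depending on $L_k$'', but $k$ was itself chosen depending on how far right you need to go. The paper avoids this circularity by first fixing thresholds $\rho_1,\rho_2$ from Propositions~\ref{prop:Qcaratheodory} and~\ref{prop:caratheodory} (these depend only on $F$, $Q$, $\tilde{Q}$ and the stabilisation level), and only \emph{then} choosing the deep level $n_1$ so that all of $\U_{n_1}(Q,\phi)$ lies to the right of $\rho_2$. Second, your wiggle placement $r_N<\min(v_B,v_C)$, $R_N>\max(v_B,v_C)$ carries no buffer: the tract geometry only guarantees that real parts along $\tilde F^{-1}$ reach $R_N-1$ and return to $r_N+1$, so without a margin you cannot be sure each of $v_B,v_C$ is hit three times. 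The paper takes $r_N=\hat B-\nu_0$, $R_N=\hat C+\nu_0$ and, separately, uses the expansion estimate to ensure $\hat B-\hat A,\hat D-\hat C>2\nu_0+2$, so the wiggle sits strictly inside the chosen interval. Once these two adjustments are made, your argument and the paper's are the same.
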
 
   \begin{proof}
    Let $n_0$ be so large that all elements of $\U_{n_0}(Q,\phi)$ are to the right of the last wiggle of $F$. Thus
     $\# \U_n(Q,\phi) = \# \U_{n_0}(Q,\phi) $ for all $n\geq n_0$ by Proposition~\ref{prop:Un1}. In particular, a number $m$ as in 
     the statement of the proposition does indeed exist. By assumption, if $n_0$ is chosen sufficiently large, then 
      \[ \# \hat{\U}_n(Q,\phi) = \# \hat{\U}_{n_0}(Q,\phi) =  \# \U_{n_0}(Q,\phi) - m\] 
      for $n\geq n_0$.

  Choose $\rho_1 > \rho$ so large that Proposition~\ref{prop:Qcaratheodory} applies, 
   and let $\rho_2 > \rho_1$ be so large that Proposition~\ref{prop:caratheodory} applies with $\tau = \rho_1$ and $\eps$ the 
     distance
      between $Q$ and $\tilde{Q}$ (as in the proof of Proposition~\ref{prop:Qcaratheodory}).
    Let $n_1$ be sufficiently large that all elements of $\U_{n_1}(Q,\phi)$ are to the right of $\rho_2 + \nu_0 + 1 $. Let 
      $[\hat{A},\hat{D}]$ be the right-most element of this set that is not in $\hat{\U}_{n_1}(Q,\phi)$, and choose two points 
       $\hat{B} < \hat{C}$ that map to the elements $B$ and $C$ of the quadruple $Q$ (not necessarily in that order). By 
       Lemma~\ref{lem:phiproperties}, we have 
       \[ \min( \hat{B} - \hat{A},\hat{D} - \hat{C}) \geq 2^{n_1}\cdot \lvert Q\rvert > 2 \nu_0 + 2 \]
       if $n_1$ was chosen sufficiently large. 
       Let $\hat{Q} = (\hat{A} < \hat{B} < \hat{C}<\hat{D})$ be the resulting 
       quadruple. 
       
     Define $\tilde{F}\in \mathcal{F}_{n+1}$ which is $(N,\rho_2)$-close to $F$ by setting 
      $r_N(\tilde{F}) \defeq \tilde{B} - \nu_0 $ and $R_N(\tilde{F}) \defeq \tilde{C} + \nu_0$. 
      
      \begin{claim}
        The following hold. 
    \begin{enumerate}[(1)]
       \item $\# \U_n(\tilde{Q},\tilde{\phi}) \leq \U_{n_0}(Q,\phi)$ for $n\geq n_0$.
       \item Every element of $\U_{n_0}(\tilde{Q},\tilde{\phi})$ that contains an element of $\hat{\U}_{n_0}(Q,\phi)$ is in $\hat{\U}_{n_0}(\tilde{Q},\tilde{\phi})$.\label{item:crookedinherited}
       \item We have $\U_1( \hat{Q},\tilde{\phi}) = \hat{\U}_1(\hat{Q},\tilde{\phi})$.
    \end{enumerate} 
      \end{claim}
      \begin{subproof}
        We have $\# \U_{n_0}(\tilde{Q},\tilde{\phi}) \leq \# \U_{n_0}(Q,\phi)$ by Proposition~\ref{prop:Qcaratheodory}, and 
          every interval of the former contains an interval of the latter. Part~\ref{item:crookedinherited} also follows from
          Proposition~\ref{prop:Qcaratheodory}. 
          
        Moroever, for $n\geq n_0$, 
          no interval of $\U_{n}(Q,\phi)$ is contained in $[r_N(\tilde{F})-\nu_0, R_N(\tilde{F})+\nu_0]$. Indeed,
          otherwise this interval would be contained in $[\hat{A},\hat{D}]\in \U_{n_1}(Q,\phi)$, contradicting Lemma~\ref{lem:Unproperties}~\ref{item:notnested}. 
          By Proposition~\ref{prop:Qcaratheodory}, also no interval of $\U_n(\tilde{Q},\tilde{\phi})$ is contained in $[r_N(\tilde{F})-\nu_0, R_N(\tilde{F})+\nu_0]$.
          So by Proposition~\ref{prop:Un1}, we have 
          \[ \# \U_n(\tilde{Q},\tilde{\phi}) \leq \# \U_{n_0}(\tilde{Q},\tilde{\phi}) \leq \U_{n_0}(Q,\phi) \] 
          for $n\geq n_0$. 

      Finally, let  $J\in \U_1(\hat{Q},\tilde{\phi})$. Then $F^{-1}(J)$ is an arc that connects a point at real part $\hat{A}< r_N(\tilde{F})$ to a point at real part 
          $\hat{D}> R_N(\tilde{F})$. By the shape of the tract (see Figure~\ref{fig:tracts}), along this curve the real parts must first grow to at least 
            $R_N(\tilde{F})-1$, decrease again at least to $r_N(\tilde{F})+1$, before they can reach $\hat{D}$. It follows that 
            $\tilde{\phi}$ does indeed map $J$ crookedly to $\hat{Q}$. 
      \end{subproof} 
     Now we can conclude the proof. Let $n > n_1$, and set $k\defeq n - n_0$. 
        By Proposition~\ref{prop:Qcaratheodory}, for every $J\in \U_n(\tilde{Q},\tilde{\phi})$, there
       is $I\in\U_{n_0}(Q,\phi)$ such that $I\subset \tilde{\phi}^{k}(J)$. Moreover, by the claim,
      there is at most one such $J$
        for every $I$.

      \begin{enumerate}[(a)]
        \item If $I\in \hat{\U}_{n_0}(Q,\phi)$, then by the claim,
           $\tilde{\phi}^k(J)\in \hat{\U}_{n_0}(\tilde{Q},\tilde{\phi})$, and thus
           $J \in \hat{\U}_{n}(\tilde{Q},\tilde{\phi})$. 
        \item If $I = \phi^{n_1-n_0}([\hat{A},\hat{D}])$, then $\tilde{\phi}^{n-n_1} \supset [\hat{A},\hat{D}]$. By the claim, 
            $\tilde{\phi}^{n-n_1}$ maps $I$ crookedly over $\hat{Q}$. By choice of $\rho$, and Proposition~\ref{prop:caratheodory}, one of
             $\tilde{\phi}^{n_1}(\hat{B})$ and $\tilde{\phi}^{n_1}(\hat{C})$ is between $\tilde{A}$ and $\tilde{B}$, and the other between
             $\tilde{C}$ and $\tilde{D}$. Thus $\phi^n$ maps $I$ crookedly over $\tilde{Q}$. 
      \end{enumerate}            
      In conclusion, at least one more of the intervals in $\U_{n}(\tilde{Q},\tilde{\phi})$ is mapped crookedly than was the case
         for $\U_n(Q,\phi)$, and the proof is complete. 
   \end{proof} 

\begin{proof}[Proof of Proposition~\ref{prop:wiggles}]
  Let $S_0$ be a quadruple with $S_0<Q$. Let $m$ be the integer such that 
    \[ \# \U_n(S_0,\phi) - \# \hat{\U}_n(S_0,\phi) = m \]
   for all sufficiently large $n$, as in Proposition~\ref{prop:subwiggles}. 
      
  Let $S_0 < S_1 < \dots < S_m = Q$ be a sequence of quadruples. Inductively 
    apply Proposition~\ref{prop:subwiggles} to obtain a sequence of functions 
    $F_0 = F, F_1,\dots, F_k \eqdef \tilde{F}$, with $k\leq m$ and $F_j \in \tilde{F}_{N+j}$, such that 
      \[ m_j \defeq \lim_{n\to\infty} \# \U_n(S_j, \phi_j) - \# \hat{\U}_n(S_j,\phi_j) \] 
     is strictly decreasing, and $m_k = 0$. Then $\tilde{F}$ is the desired function. 
\end{proof} 

\begin{proof}[Proof of Theorem~\ref{thm:pseudoarc1}]      
 Let $F$ be as in Theorem~\ref{thm:Fpseudoarcs}, and $\hat{F}$ its $2\pi i$-periodic extension. 
      By Theorem~\ref{thm:realisation}, there is a disjoint-type entire function for which every Julia continuum is homeomorphic to a Julia continuum of $\hat{F}$, 
       and hence also a pseudo-arc. 
\end{proof}

\begin{proof}[Proof of Theorem~\ref{thm:pseudoarc2}]
 The theorem also follows from the construction in the next section, which uses Bishop's \emph{quasiconformal folding}. 
   Here, we indicate how to prove the theorem without recourse to quasiconformal folding.    
   Firstly, note that it follows from the proof of Theorem~\ref{thm:realisation} that the entire function $g$ 
     constructed has finite lower order. Indeed, the function $g$ is
    \emph{quasiconformally equivalent near infinity} to the universal covering $f$ defined by 
       \[ f(\exp(\zeta)) = \exp(F(\zeta)), \]
    and the latter has finite lower order. (Compare~\cite[Proof of Proposition 2.3(b)]{area}.) 
       
    However, with this argument we cannot directly show that the lower order can be to be $1/2$. To do so, we modify the proof by changing the functions in the class $\mathcal{F}$. Instead of
     considering conformal isomorphisms $T\to \HH$, where $T$ is a tract as in Figure~\ref{fig:tracts}, we instead consider conformal isomorphisms
       \[ F\colon T\to H \defeq \{ x + iy \colon x > -14 \log_+\lvert y\rvert\}, \]
       again normalised so that $F(5)=5$ and $F(\infty)=\infty$. This does not affect the proof of the theorem. Indeed, we only used 
       the range of the function $F$ in certain 
       estimates of the hyperbolic metric, which hold equally for $F$ with range $H$ as above. 

      Then we apply Theorems 1.7, 1.8 and 1.9 of~\cite{approximationhypdim} to the function $F$, to obtain an entire function whose Julia continua are all pseudo-arcs,
       and which has order $1/2$ by~\eqref{eqn:Flowerorder}. 
\end{proof}

\section{Finitely many singular values} \label{sec:speiser}

  In this section, we indicate how to modify the construction to prove Theorem~\ref{thm:speiser}.
    We follow similar lines as \cite[Section~18]{bishopfolding} and \cite[Section~15]{arclike}, and 
    assume that the reader is familiar with Bishop's technique of quasiconformal folding from~\cite{bishopfolding}.

   \begin{proof}[Proof of Theorem~\ref{thm:speiser}]
      Let $F\colon T\to \HH$ be the function $F\in\F$ from Theorem~\ref{thm:Fpseudoarcs}. We modify 
        $F$ to a function $\tilde{F}\colon \tilde{T}\to\HH$, where 
          \[ \tilde{T} = T \cup \{x + iy\colon x\leq 4 \text{ and } \lvert y\rvert < \pi\}, \]
       and $\tilde{F}$ is chosen such that $F(5)=5$, and $F(z)\to\infty$ as $\re z\to\infty$ in $\tilde{T}$. 
       
       We do not have $\tilde{T}\subset \overline{\HH}$, so $\tilde{F}$ is not a function in $\mathcal{H}$. 
         However,~\eqref{eqn:stripestimate}, and hence 
          Lemma~\ref{lem:expansion}, also holds for $\tilde{F}$. It follows that Lemma~\ref{lem:phiproperties} also
          holds for the one-dimensional projection $\tilde{\phi}$ of $\tilde{F}$. Moreover, the maps $\phi$ and
          $\tilde{\phi}$ behave similarly:
          
      \begin{claim}
        There exists constants $C,X>0$ with the following property. If $x\geq 9$ and $n\geq 1$ 
           with $\tilde{\phi}^n(x) \geq X$, then
          \[ \lvert \tilde{\phi}^n(x) - \phi^n(x)\rvert \leq C. \]
      \end{claim}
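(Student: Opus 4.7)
The plan is to reduce the claim to a one-step comparison of $\phi$ and $\tilde\phi$ and then propagate the bound through the iteration using the $1/2$-Lipschitz contraction of both maps.

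\textbf{One-step bound.} First I would show that there are constants $Y_0, C_0 > 0$ with $|\phi(y) - \tilde\phi(y)| \le C_0$ whenever $y \ge Y_0$. Introduce the conformal injection
\[
  \Phi \defeq \tilde F \circ F^{-1} \colon \HH \to \tilde F(T) \subset \HH,
\]
which fixes $5$ and $\infty$. Its omitted set is $D \defeq \HH \setminus \Phi(\HH) = \tilde F(\tilde T \setminus T)$, the image under $\tilde F$ of the half-strip $\tilde T \setminus T = \{x + iy : x \le 4,\ |y| < \pi\}$. The left end of $\tilde T$ is a prime end distinct from the right end; since $\tilde F$ sends the right end to $\infty$, it must send the left end to a finite boundary point of $\HH$, so $D$ is bounded. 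The normalizations $\Phi(5) = 5$ and $\Phi(\infty) = \infty$, together with the Laurent expansion $\Phi(z) = z + \const + O(1/z)$ at infinity (standard for a univalent self-embedding of $\HH$ with bounded complement), then give $|\Phi(z) - z| \le C_0'$ uniformly on $\HH$. Combining this with the identity
\[
  \tilde\phi(\Phi(y)) = \re \tilde F^{-1}\bigl(\tilde F(F^{-1}(y))\bigr) = \phi(y)
\]
and the $1/2$-Lipschitz property of $\tilde\phi$ (Lemma~\ref{lem:phiproperties}(a), which the excerpt notes applies to $\tilde F$) yields $|\phi(y) - \tilde\phi(y)| \le C_0 \defeq C_0'/2$.

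\textbf{Propagation.} Set $\delta_k \defeq |\phi^k(x) - \tilde\phi^k(x)|$. Provided $\tilde\phi^{k-1}(x) \ge Y_0$, Lemma~\ref{lem:phiproperties}(a) and the one-step bound give
\[
  \delta_k \le \bigl|\phi(\phi^{k-1}(x)) - \phi(\tilde\phi^{k-1}(x))\bigr| + \bigl|\phi(\tilde\phi^{k-1}(x)) - \tilde\phi(\tilde\phi^{k-1}(x))\bigr| \le \tfrac{1}{2} \delta_{k-1} + C_0.
\]
Since $\delta_0 = 0$, induction gives $\delta_k \le 2 C_0$ for all $k \le n$, so one may take $C \defeq 2 C_0$.

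\textbf{Staying in the good regime.} To verify $\tilde\phi^{j}(x) \ge Y_0$ for every $j < n$, I use Lemma~\ref{lem:phiproperties}(c), which applies to $\tilde\phi$: $\tilde\phi(t) \le 5 + 2 \log(t/5)$. From $\tilde\phi^n(x) \ge X$ this forces $\tilde\phi^{n-1}(x) \ge 5 \exp((X-5)/2)$, and iterating backward shows the earlier iterates are tower-exponentially large in $X$. Choosing $X \ge \max(Y_0,\,5 + 2\log(Y_0/5))$ therefore ensures $\tilde\phi^{j}(x) \ge Y_0$ throughout the iteration, so the propagation step applies at every $k \le n$.

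\textbf{Main obstacle.} The substantive content is the one-step bound, and within it the estimate $|\Phi(z) - z| \le C_0'$. Its two ingredients---boundedness of $D$ inside $\HH$ (which requires correctly identifying the image of the left prime end of $\tilde T$) and the uniform Laurent-type estimate for $\Phi$ at infinity---are classical but need careful verification. Once they are in hand, the iteration and the backward bound on iterates are routine consequences of the contraction properties of $\phi$ and $\tilde\phi$ supplied by Lemma~\ref{lem:phiproperties}.
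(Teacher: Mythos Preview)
Your overall architecture---a one-step comparison followed by inductive propagation using the $\tfrac12$-Lipschitz contraction---matches the paper's proof exactly. The difference lies in how the one-step bound is obtained, and here your argument has a genuine gap.

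You claim that $\Phi = \tilde F\circ F^{-1}$ satisfies $\Phi(z) = z + \const + O(1/z)$ at infinity, calling this ``standard for a univalent self-embedding of $\HH$ with bounded complement''. This is only standard under the \emph{hydrodynamic} normalisation $\Phi(z)-z\to 0$; the conditions $\Phi(5)=5$ and $\Phi(\infty)=\infty$ do not force the leading coefficient to be~$1$. Indeed, for any bounded hull $K\subset\HH$ and any $a>0$, the map $z\mapsto g_K^{-1}(a(z-5)+g_K(5))$ fixes $5$ and~$\infty$, has image $\HH\setminus K$, and has leading coefficient~$a$. Without the leading coefficient equal to~$1$, $\lvert\Phi(y)-y\rvert$ is unbounded and your use of the $\tfrac12$-Lipschitz estimate to bound $\lvert\tilde\phi(\Phi(y))-\tilde\phi(y)\rvert$ fails.

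The paper sidesteps this by comparing $F$ and $\tilde F$ through a quasiconformal map $\psi\colon\HH\to\HH$ (essentially your $\Phi^{-1}$ on its domain of conformality) whose dilatation is supported on a bounded set. This yields only $\psi(z)\sim cz$ for some $c>0$, not $c=1$; but that is enough to conclude that $\dist_{\HH}(x,\psi(x))$ is uniformly bounded. Since $F^{-1}$ is a hyperbolic isometry and $\rho_T\geq 1/2$ by~\eqref{eqn:stripestimate}, bounded hyperbolic distance in~$\HH$ translates to bounded Euclidean distance between $F^{-1}(x)$ and $\tilde F^{-1}(x)$, which is the one-step bound you need. Your argument is easily repaired the same way: replace ``$\lvert\Phi(y)-y\rvert\leq C_0'$'' by ``$\dist_{\HH}(\Phi(y),y)$ is bounded'' (which does follow from $\Phi(z)\sim z/c$ with bounded omitted set), and then bound $\lvert\tilde\phi(\Phi(y))-\tilde\phi(y)\rvert$ via~\eqref{eqn:stripestimate} rather than the Lipschitz constant.

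A minor remark: your ``staying in the good regime'' step is correct but more elaborate than necessary. Since $\tilde\phi(t)<t$ for $t\geq 5$, the iterates $\tilde\phi^j(x)$ are decreasing in~$j$, so $\tilde\phi^n(x)\geq X$ already forces $\tilde\phi^j(x)\geq X$ for all $j\leq n$.
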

      \begin{subproof}
         Let $\phi \colon \tilde{T} \to T$ be a quasiconformal homeomorphism
          such that $\phi(z)=z$ when $\re z\geq 5$; such a map is
         easy to construct. Then we have 
            \[ \psi \circ \tilde{F} = F\circ \phi, \]
        where $\psi$ is a quasiconformal self-map of the upper half-plane. The dilatation of $\psi$ is supported on a bounded
        set, and thus $\psi(z) \approx c\cdot z$ as $z\to\infty$, for some $c>0$. 
        
        Hence the hyperbolic distance between $x$ and $\psi(x)$ is uniformly bounded (for $x\geq 5$, say), 
         and therefore the Euclidean distance between $F^{-1}(x)$ and $F^{-1}(\psi(x))$ is uniformly bounded by
         a constant $\delta$. Recall that $F^{-1}(\psi(x)) = \tilde{F}^{-1}(x)$, as long as this point as real part at least $5$.
         
         Set $X\defeq 9 + \delta$ and $C \defeq 2\delta$. 
         Recall that $\tilde{\phi}^n(x)\geq 9+\delta $ implies $\tilde{\phi}^k(x)\geq 9+\delta$ for $k=0,\dots n$, by
          Lemma~\ref{lem:phiproperties}. 
          
       The claim now follows by induction. If it holds for $n$, then we have 
         \begin{align*} \lvert \tilde{\phi}^{n+1}(x) - \phi^{n+1}(x)\rvert &\leq 
             \lvert \tilde{\phi}^{n+1}(x) - \phi(\tilde{\phi}^n(x))\rvert + \lvert \phi(\tilde{\phi}^n(x)) - \phi^{n+1}(x)\rvert  \\
               &\leq \lvert \tilde{F}^{-1}(\tilde{\phi}^n(x)) - F^{-1}(\tilde{\phi}^n(x))\rvert + \delta 
                  \leq 2\delta = C, \end{align*}
       as desired. 
      \end{subproof}
      
      Now consider the half-plane $H = \{a + ib\colon a > 4\}$ and the $2\pi i$-periodic extension $G$ of the restriction 
         \[ \tilde{F} \colon \tilde{F}^{-1}(H) \to H. \]
       We have $\overline{G^{-1}(H)} \subset H$, at least if $r_1(F)$ was chosen sufficiently large. Indeed, on the initial part
        of the tract, the function $\tilde{F}$ is close to the conformal isomorphism
          \[ F_S \colon \tilde{S} = \{ a + ib\colon \lvert b\rvert < \pi\} \to \HH; \qquad z\mapsto 5\exp\left(\frac{z-5}{2}\right), \]
          and for $z\in H$ we have
             \[ \re F_S^{-1}(z) = 5 + 2\cdot \log \frac{\lvert z\rvert}{5} \geq 5 + 2\log \frac{4}{5} = 5 - \log \frac{25}{16} > 4. \]
             
      So the map $G$ is a disjoint-type function in the class $\Blog^{\operatorname{p}}$ as defined in~\cite[Definition~3.3]{arclike}, and it makes
       sense to consider its Julia continua, exactly as we did for the function $F$. That is, a Julia continuum of $G$ is 
       a connected component of the set of points that remain in $H$ under iteration of $\tilde{F}$. 
       
      It follows from the claim above that $\tilde{\phi}$ satisfies the hypotheses of Theorem~\ref{thm:pseudoarcrealisation}, 
       and the proof
       of that theorem applies equally to $G$. So all of the Julia continua of $G$ are pseudo-arcs. 
       
     Now we apply quasiconformal folding to the function $\tilde{F}$. To do so, 
       introduce vertices on $\tilde{T}$ so that $G = \exp(\tilde{T})$ 
        is a bounded-geometry tree in the sense of~\cite{bishopfolding}. 
       These vertices should be placed such that the two intervals on $\partial \HH$ corresponding to an edge have length
       at least $\pi$ under $\tilde{F}$. We may use standard estimates of harmonic measure to deduce that 
       this can be achieved while letting the length of any edge at real parts $\geq R$ tend to zero like $\exp(c\cdot R)$,
       for some $c>0$. 
       
    The quasiconformal folding theorem~\cite[Theorem~1.1]{bishopfolding} now yields an entire function
     $f$, having at most two critical values and no asymptotic values, and a $K$-quasiconformal map $\phi$ such that 
       \[ f \circ \phi = \cosh \circ \tilde{F} \] 
         on the complement of the set 
           \[ G(r_0) \defeq \bigcup_{e} \{z\in\C\colon \dist(z,e) < r_0 \diam(e) \}, \]
           where the union is over all edges of $G$, and $r_0$ is a universal constant. In particular, the dilatation of the 
           map $\phi$ is supported on the set $G(r_0)$. Precomposing $\phi$ with a 
         suitable linear map, we may choose $f$ to have disjoint type. 
           
       Our assumption on the lengths of edges on $\partial \tilde{T}$ implies that the cylindrical area of
        $G(r_0)$ is finite, and therefore by the Teichm\"uller-Wittich-Belinski-Lehto Theorem, the map $\phi$ is
         asymptotically conformal at infinity. It follows that the map $f$ has lower order $1/2$. 
         
      It remains to show that the Julia continua of $f$ are all pseudo-arcs. Suppose, by contradiction, that $K$ was a decomposable
        subcontinuum of $J(f)$. Then, as in the proof of Theorem~\ref{thm:realisation}, we can find a subcontinuum
        $\tilde{K}\subset K$, for some $n\geq 0$, which is also decomposable, and which furthermore has the following
        properties: 
        \begin{enumerate}[(a)]
          \item the iterates of $f$ tend to infinity uniformly on $\tilde{K}$; 
          \item for sufficiently large $n$, the sets $f^n(\tilde{K})$ are disjoint from $G(r_0)$. 
        \end{enumerate}
        
        As noted in \cite[Section~15]{arclike}, 
          any such subcontinuum of $J(f)$ is homeomorphic to a subcontinuum of $J(G)$. Therefore $\tilde{K}$ is
          a pseudo-arc, which is a contradiction to the assumption that $\tilde{K}$ is decomposable. 
   \end{proof} 
   
 \begin{proof}[Proof of Corollary~\ref{cor:locallyconnected}]
   Let $g$ be the function from Theorem~\ref{thm:speiser}; we may assume that its critical values are $0$ and $1$. 
     Since $g$ has no asymptotic values, the 
     set $T\defeq g^{-1}([0,1])$ has the structure of an infinite tree, with vertices at the preimages of $0$ and $1$. 
       
   The function $g$ has infinitely many critical points over both $0$ and $1$. This  follows immediately from the quasiconformal folding 
      technique, but we may also see this directly: Every vertex of degree $1$ is a leaf of the tree $T$; since 
      $T$ is connected, it follows that every element of $g^{-1}(0)$ is adjacent to at least one element of $g^{-1}(1)$ of
      degree $\geq 2$, and vice versa.

    In particular, we may pick two critical points $c_0$ and $c_1$ with $g(c_0)=0$ and $g(c_1)=1$ such that 
       $c_0$ and $c_1$ are not adjacent in $T$. Define
         \[ f(z) \defeq g\left(z\cdot c_1 + (1-z)\cdot c_0\right). \]
    Then $0$ and $1$ are super-attracting fixed points of $f$. Since these are the only
     critical values of $f$, it follows that $f$ is \emph{hyperbolic} in the sense of~\cite[Definition~1.1]{hyperbolicboundedfatou}. 
     By~\cite[Corollary~1.9]{hyperbolicboundedfatou}, the Julia set $J(f)$ is locally connected. Moreover,
      every component of $F(f) =\C\setminus J(f)$ is a Jordan curve~\cite[Theorem~1.4]{hyperbolicboundedfatou}
       
     To show that $J(f)$ is a Sierpi\'nski carpet, we must show that different components of 
        $F(f) = \C\setminus J(f)$ have pairwise disjoint closures. Let $U_0$ and $U_1$ be the components
        containing $0$ and $1$, respectively. Recall that $U_0$ is a Jordan domain, and that $\overline{U_0}$ contains
        no critical points other than $0$. So we can find a Jordan domain $\tilde{U}_0\supset \overline{U_0}$ whose closure 
        also contains no other critical points. Every connected component of 
        $f^{-1}(\tilde{U}_0)$ is mapped to $\tilde{U}_0$ as a finite-degree proper map with a single critical point, and 
        in particular contains a unique connected component of $f^{-1}(U_0)$. Therefore different connected components
        of $f^{-1}(U_0)$ have disjoint closures. The same argument works for components of $f^{-1}(U_1)$. 

    Next we show that $\partial U_0$ and $\partial U_1$ are disjoint. Suppose, by contradiction, that
      there was a common point $z_0$. There can be at most one such point. Indeed, otherwise
      there would be a Jordan curve in $\overline{U_1}\cup \overline{U_0}$ that is contained in 
        $K\defeq \overline{U_1}\cup \overline{\U_2}$. Since $K$ is bounded and forward-invariant,
        the interior region of this curve would be contained in the Fatou set, which contradicts the fact that $0$ and $1$ belong
        to different Fatou components. 
        
     We may find invariant curves $\gamma_0$ and $\gamma_1$ such that $\gamma_j$ connects $z_0$ to $j$ in $U_j$. 
       By considering a homotopy between $\gamma_0\cup \gamma_1$ and $[0,1]$, we see that $0$ and $1$ are
       adjacent in the graph $f^{-1}([0,1])$. This contradicts the choice of $c_0$ and $c_1$, and the definition of $f$. 
       So $\partial U_0$ and $\partial U_1$ are disjoint. This implies that different 
       connected components of the backwards orbits of $U_0$ and $U_1$ must also be disjoint.

   The existence of invariant pseudo-arcs in $J(f)$ follows from \cite{boettcher}. Indeed, 
      the main result of that paper shows the existence of a homeomorphism that conjugates the
        disjoint-type map $g$ to $f$ on the set $J_{\geq R}(g)$ 
        of points whose orbit under $g$ remains outside a certain disc $D(0,R)$ 
        at all times. This set contains infinitely many different invariant Julia continua of $g$, which are all
        pseudo-arcs by our main theorem. 
        
    Similarly, the collection of all Julia continua of $g$ that are completely contained in $J_{\geq R}(g)$ 
       give rise to the uncountable collection of pseudo-arcs whose existence is asserted in the final part of the corollary. 
 \end{proof}
   
\bibliographystyle{amsalpha}
\bibliography{../../Biblio/biblio}

\end{document}